\font\tencyr=wncyr8
\def\cyr{\tencyr\cyracc}
\newtheorem{thm}{Theorem}[section]
\newtheorem{cor}[thm]{Corollary}
\newtheorem{lemma}[thm]{Lemma}
\newtheorem{prop}[thm]{Proposition}
\newtheorem{conj}[thm]{Conjecture}
\theoremstyle{definition}
\newtheorem{defn}[thm]{Definition}
\theoremstyle{remark}
\numberwithin{equation}{section}
\newcommand{\M}{\mathcal M}
\newcommand{\Fin}{\textbf{Fin}}
\newcommand{\Set}[2]{\ensuremath{ \{ #1 : #2 \} }}
\definecolor{myPurple}{rgb}{.5,0,.5}
\def\res{\!\!\upharpoonright\!\!}
\newcommand{\la}{\langle}
\newcommand{\ra}{\rangle}
\newcommand{\dom}[1]{\text{dom}(#1)}
\newcommand{\rg}[1]{\text{rg}(#1)}
\def\diverges{\!\uparrow}
\def\converges{\!\downarrow}
\newcommand{\gtilde}{\tilde{g}}
\newcommand{\Gtilde}{\tilde{G}}
\renewcommand{\tt}{{\texttt{tt}}}
\newcommand{\bT}{{\texttt{bT}}}
\newcommand{\wtt}{{\texttt{wtt}}}
\newcommand{\btt}{{\texttt{btt}}}
\newcommand{\kbtt}{{k\text{-}\btt}}
\newcommand{\onebtt}{{1\text{-}\btt}}
\newcommand{\twobtt}{{2\text{-}\btt}}
\newcommand{\normbtt}[1]{{#1\text{-}\btt}}
\def\phi{\varphi}
\newenvironment{pf}{\begin{trivlist}\item[\hskip\labelsep
{\it Proof.}]}{\end{trivlist}}
\newenvironment{pftitle}[1]{\begin{trivlist}\item[\hskip\labelsep
{\it #1.}]}{\end{trivlist}}
\begin{document}

\title[Distance Function]{The distance function on a computable graph}%
\author{Wesley Calvert}%
\address{Department of Mathematics, Mail Code 4408\\ 1245 Lincoln
  Drive\\ Southern Illinois University\\ Carbondale, Illinois 62901, U.S.A.}%
\email{wcalvert@siu.edu}%

\author{Russell Miller}%
\address{Department of Mathematics, Queens College -- CUNY, 65-30 Kissena Blvd., Flushing, NY 11367, U.S.A.; and Ph.D.\ Programs in Mathematics and Computer Science, CUNY Graduate Center, 365 Fifth Avenue, New York, NY  10016, U.S.A.}%
\email{Russell.Miller@qc.cuny.edu}%

\author{Jennifer Chubb Reimann}%
\address{Department of Mathematics\\University of San Francisco\\2130 Fulton Street\\San Francisco, California 94117, U.S.A.}%
\email{jcchubb@usfca.edu}%

\thanks{The authors wish to acknowledge useful conversations with
  Denis Hirschfeldt.  The work of the first author was partially
  supported by a Fulbright-Nehru Senior Research Scholarship, the
  George Washington University Research Enhancement Fund, and NSF
  grant DMS--1101123.  That of the second author was partially supported
  by grant DMS--1001306 from the National Science Foundation,
  by grant 13397 from the Templeton Foundation, and by grants numbered
  62632-00 40, 63286-00 41, and 64229-00 42 from
  The City University of New York PSC-CUNY Research Award Program.  The third author had partial support from The George Washington University Research Enhancement Fund and from NSF grant DMS--0904101. }
\begin{abstract}
We apply the techniques of computable model theory to the distance
function of a graph.  This task leads us to adapt the definitions of several
truth-table reducibilities so that they apply to functions as well as to sets,
and we prove assorted theorems about the new reducibilities and about functions
which have nonincreasing computable approximations.  Finally, we show
that the spectrum of the distance function can consist of an arbitrary single
\btt-degree which is approximable from above, or of all such \btt-degrees at once,
or of the \bT-degrees of exactly those functions approximable from above
in at most $n$ steps.
\end{abstract}
\maketitle
\section{Introduction}

Every connected graph has a distance function, giving the length
of the shortest path between any pair of nodes in the graph.
Graphs appear in a wide variety of mathematical applications,
and the computation of the distance function is usually
crucial to these applications.  Examples range from web
search engine algorithms,
to Erd\"os numbers and parlor games (``Six Degrees of Kevin Bacon''),
to purely mathematical questions.

Therefore, the question of the difficulty of computing
the distance function is of natural interest to mathematicians
in many areas.  This article is dedicated to exactly that enterprise,
on infinite graphs.
Assuming that the graph in question is symmetric,
irreflexive, and computable -- that is,
that one can list out all its nodes and decide effectively which pairs
of nodes have an edge between them --
we investigate the Turing degree and other measures
of the difficulty of computing the distance function.

We began this study by considering the spectrum of the distance
function -- a standard concept in computable model theory,
giving the set of the Turing degrees of distance functions on
all computable graphs isomorphic to the given graph.
This notion is usually used for relations on a computable structure,
rather than for functions, but it is certainly the natural first question
one should ask.  As our studies continued, however, they led us
to consider finer reducibilities than ordinary Turing reducibility,
and since we were studying a function instead of a relation,
we often had to adapt these reducibilities to functions.
The resulting concepts are likely to be of interest to pure computability
theorists, as well as to those dealing with applications, and,
writing the paper in logical rather than chronological order,
we spend the first sections defining and examining these
reducibilities on functions.  Only in the final sections do we address
the original questions about the distance function on a computable graph.
Therefore, right here we will offer some further intuition
about the distance function, to help the reader understand
why the material in the first few sections is relevant.

For a computable connected graph $G$, the natural first
step for approximating the distance $d(x,y)$ between two nodes
$x,y\in G$ is to find some path between them.  By connectedness,
a systematic search is guaranteed to produce such a path
sooner or later, and its length is our first approximation
to the distance from $x$ to $y$.  The next natural step is to
search for a shorter path, and then a shorter one than that,
and so on.  Of course, these path lengths are all just approximations
to the actual distance from $x$ to $y$.  One of the approximations
will be correct, and once we find it, its path length will never
be superseded by any other approximation.  That is, our
(computable) approximations will \emph{converge} to the correct
answer, and so the distance function is always $\emptyset'$-computable,
by the Limit Lemma (see \cite[Lemma III.3.3]{S87}).
However, unless $d(x,y)\leq 2$, we will never be sure that our approximation
is correct, since a shorter path could always appear.

By definition
$d(x,x)=0$, and $d(x,y)=1$ iff $x$ and $y$ are adjacent, so it is computable
whether either of these conditions holds.  It is not in general computable
whether $d(x,y)=2$, but it is $\Sigma^0_1$, since we need only
find a single node adjacent to both $x$ and $y$.  For each $n>2$,
however, the condition $d(x,y)=n$ is given by the conjunction
of a universal formula and an existential formula, hence defines a
difference of computably enumerable sets, and in general cannot
be expressed in any simpler way than that.  Indeed, the distance function
often fails to be computable, and likewise the set of pairs $(x,y)$
with $d(x,y)=n$ often fails to be computably enumerable when $n>2$.
In what follows, however, we will show that the distance function
always has computably enumerable Turing degree -- which in turn
will start to suggest why Turing equivalence is not the most
useful measure of complexity for these purposes.  Other questions
immediately arise as well.  For instance, must there exist a computable
graph $H$ isomorphic to $G$ such that $H$ has computable distance function?
Or at least, must there exist a computable $H\cong G$ such that
we can approximate the distance function on $H$ with no more than
one (or $n$) wrong answer(s)?

The approximation algorithm described above is not of arbitrary difficulty,
in the pantheon of computable approximations to functions.  Our approximations
to $d(x,y)$ are always at least $d(x,y)$, and decrease until
(at some unknown stage) they equal $d(x,y)$.
Hence $d(x,y)$ is \emph{approximable from above}.  This notion
already exists in the literature; the most commonly seen function of this type
is probably Kolmogorov complexity, which on input $n$ gives the shortest length
of a program outputting $n$.  Up to the present, approximability from above
has not been so common in computable model theory; the more common notion
there is \emph{approximability from below}, which arises (for instance)
when one tries to find the number of predecessors of a given element
in a computable linear order of order type $\omega$.
In Section \ref{sec:AFA} we give full definitions of the class
of functions approximable from above,
which can be classified in much the same way as
the Ershov hierarchy, and compare it with the class of functions
approximable from below.  These two classes turn out to be more
different than one would expect!  First, though, in Section
\ref{sec:reducibilities}, we present the exact definitions
of the reducibilities we will use on our functions, so that we may refer
to these reducibilities in Section \ref{sec:AFA}.

Very little background in computable model theory is actually
required in order to read this article, since the distance function
turned out to demand a somewhat different approach than is typical in that field of study.
A background in general computability theory will be useful, however,
particularly in regard to several of the so-called \emph{truth-table reducibilities},
and for this we suggest \cite{Odi}, to which we will refer frequently
in Section \ref{sec:reducibilities}.  We try to maintain notation
from \cite{Odi} as we adapt the definitions of the truth-table
reducibilities to deal with functions.  We give the requisite definitions
about graphs in Section \ref{sec:graphs}, where they are first needed.

\section{Reducibilities on Functions}
\label{sec:reducibilities}

When discussing Turing computability relative to an oracle,
mathematicians have traditionally
taken the oracle to be a subset of $\omega$.  To compute relative
to a total or partial function from $\omega^n$ into $\omega$,
they simply substitute the graph of the function for the function itself,
then apply a coding of $\omega^{n+1}$ into $\omega$.
This metonymy works admirably as far as ordinary
Turing reducibility is concerned, and any alternative definition
of Turing reducibility for functions should be equivalent to this one.
However, \emph{bounded} Turing reducibility (in which the use of the oracle is computably bounded; see Definition \ref{defn:tt}) among functions requires a
new definition, and so we offer here an informal version of our notion
of a function oracle.

First, to motivate this notion, consider
bounded Turing ($\bT$) reducibility with a function oracle.  One would certainly
assume that a total function $f$ should be $\bT$-reducible
to itself.  However, if one wishes to compute $f(x)$
for arbitrary $x$, using the graph of $f$ as an oracle, and if $f$
is not computably bounded, then there is no obvious way
to compute in advance an upper bound on the codes $\la m,n\ra$
of pairs for which one will have to ask the oracle about membership
of that pair in the graph.  (This question is addressed more
rigorously in Proposition \ref{prop:compbdd}
and Lemma \ref{lemma:Fokina} below.)
So, when we move to reducibilities finer than $\leq_T$,
there is a clear need for a notion of Turing machine having
a function as the oracle.

For simplicity, we conceive of a Turing machine
with a function oracle $F$ as having three tapes:  a two-way scratch tape
(on which the output of the computation finally appears, should
the computation halt), a one-way \emph{question tape}, and a one-way
\emph{answer tape}.  When the machine wishes to ask its function
oracle for the value $F(x)$ for some specific $x$, it must write
a sequence of exactly $(x+1)$ $1$'s on the question tape,
and must set every cell of the answer tape to be blank.
Then it executes the \emph{oracle instruction}.  In this model
of computation, the oracle instruction is no longer a forking instruction.
Rather, with the tapes in this state, the oracle instruction
causes exactly $(1+F(x))$ $1$'s to appear on the answer tape
at the next step, and the machine simply proceeds to the next
line in its program (which most likely will start counting
the number of $1$'s on the answer tape, in order to use the
information provided by the oracle).  Notice that it is
perfectly acceptable to say that a set is computable
from a function oracle (using the above notion),
or that a function is computable from a set oracle (using the usual
notion of oracle Turing computation).  If one wishes to speak only of functions,
there is no harm in replacing a set by its characteristic function.
We leave it to the compulsive reader to formulate the precise definition
of a Turing machine with function oracle, by analogy to the
standard definition for set oracles.  A more immediate
(and equivalent) definition uses a different approach.
\begin{defn}
\label{defn:fctoracle}
The class of \emph{partial functions on $\omega$ computable
with a function oracle $F$}, where $F:\omega\to\omega$ is total,
is the smallest class of partial functions closed under the axiom
schemes I-VI from \cite[\S~I.2]{S87}
and containing $F$.
\end{defn}
Of course, this is exactly the usual definition of the partial $F$-recursive
functions, long known to be equivalent to the definition of functions
computable by a Turing machine with the graph of $F$ as its oracle.
As far as Turing reducibility is concerned, nothing has changed.
Only stronger reducibilities need to be considered.  We start
by converting the standard definitions (for sets) of $\leq_m$
and $\leq_1$ to definitions for functions.

\begin{defn}
\label{defn:mreducible}
Let $\phi$ and $\psi$ be partial functions from $\omega$ to $\omega$.  We say that
$\phi$ is \emph{$m$-reducible to $\psi$}, written $\phi\leq_m\psi$,
if there exists a total computable function $g$ with $\phi = \psi\circ g$.
(For strictly partial functions, this includes the requirement that
$(\forall x)[ \phi(x)\converges \iff \psi(g(x))\converges]$.)

If the $m$-reduction $g$ is injective, we say that $\phi$ is
\emph{$1$-reducible to $\psi$}, written $\phi\leq_1\psi$.
\end{defn}

This definition already exists in the literature on computability,
having been presented as part of the theory of numberings studied
by research groups in Novosibirsk and elsewhere.  (See e.g.\
\cite{E77}, \cite{Ebk} or \cite[p.\ 477]{E99} for the notion of reducibility on numberings.)
It is appropriate here as an example of our approach in generalizing
reducibilities on sets to reducibilities on functions, for which reason
we feel justified in calling it $m$-reducibility.
For subsets $A,B\subseteq\omega$, it is quickly seen that
$A\leq_m B$ iff $\chi_A\leq_m\chi_B$, where these are the characteristic functions
of those sets; similarly for $A\leq_1 B$.  The analogue of Myhill's Theorem
for functions states that if $\phi\leq_1\psi$ and $\psi\leq_1\phi$,
then in fact there is a computable permutation $h$ of $\omega$
with $\phi = \psi\circ h$ (and hence $\psi=\phi\circ (h^{-1})$),
in which case we would call $\phi$ and $\psi$ \emph{computably isomorphic}.
The proof is exactly the same as that of the original theorem of Myhill
(see \cite[Thm.\ I.5.4]{S87}), and it makes no difference whether
$\phi$ and $\psi$ are both total or not.

\begin{defn}
\label{defn:completeness}
A function $\psi$ is \emph{$m$-complete} for a class $\Gamma$ of functions
if $\psi\in\Gamma$ and, for every $\phi\in\Gamma$, we have $\phi\leq_m\psi$.
We define $1$-completeness similarly, but require that $\phi\leq_1\psi$.
\end{defn}

For example, the universal Turing function $\psi(\la e,x\ra)=\phi_e(x)$
is $1$-complete partial computable, i.e.\ $1$-complete for the class
of all \emph{partial} computable functions.  (For each single $\phi_e$ in the class,
the function $x\mapsto \la e,x\ra$ is a $1$-reduction.)
A more surprising result is that there does exist a function $h$
which is $1$-complete total computable:  let $h(\la n,m\ra)=m$,
so that, for every total computable $f$, the function
$g(n)=\la n,f(n)\ra$ is a $1$-reduction from $f$ to $h$.

The notion of $m$-reducibility for sets has small irritating features,
particularly the status of the sets $\emptyset$ and $\omega$.
Intuitively, the complexity of each of these is as simple as possible,
yet they are $m$-incomparable to each other.  (Also, no $S\neq\emptyset$
has $S\leq_m\emptyset$ and no $S\neq\omega$ has $S\leq_m \omega$;
intuitively this is reasonable, but it is still strange to have two $m$-degrees
containing only a single set each.)  The same problem is magnified for
$m$-reducibility on functions.  Clearly, if $\phi\leq_m\psi$, then
$\rg{\phi}\subseteq\rg{\psi}$.  It follows that every total
constant function forms an $m$-degree all by itself.
Moreover the function $\phi(x)=2x$ is $m$-incomparable
with $\psi(x)=2x+1$, even though these seem to have
very similar complexity; and assorted other pathologies can be found.
To address these issues, we offer the following adaptation of $m$-reducibility.

\begin{defn}
\label{defn:modified}
The \emph{join} of two partial functions $\phi$ and $\psi$
is the function which splices them together:
$$ (\phi\oplus\psi)(x)=\left\{\begin{array}{cl}
\phi(\frac{x}2), & \text{if $x$ is even;}\\
\psi(\frac{x-1}2), & \text{if $x$ is odd.}
\end{array}\right.$$

A partial function $\phi$ is \emph{augmented $m$-reducible} to another
partial function $\psi$, written $\phi\leq_a\psi$,
if $\phi\leq_m (\iota\oplus \psi)$,
where $\iota(0)\diverges$ and $\iota(x+1)=x$.
\end{defn}

The intention here is that, for any computable subset $S\subseteq\omega$
such that $\phi\res S$ and $(S\cap\dom{\phi})$ are both computable,
one can define the $m$-reduction $g$ from $\phi$ to $\iota\oplus\psi$ by choosing
$g(x)=2+2\phi(x)$ for $x\in S$, with $g(x)=0$ if $\phi(x)$ is known to diverge.
On $(\omega-S)$, $g$ must actually serve as a (computable) $m$-reduction
from $\phi\res(\omega-S)$ to $\psi$.
Under this definition, there is a single $a$-degree consisting of all partial computable
functions with computable domains.  Thus, many pathologies regarding
functions with seemingly similar complexity but distinct domains are avoided.
(Some remain.  A noncomputable function $\phi$ will generally be $a$-incomparable
to $(1+\phi)$, for instance.)

\begin{prop}
\label{prop:pcdegree}
The $a$-degree of the empty function $\lambda$ is the least $a$-degree.
Moreover, a function $\phi$ belongs to this $a$-degree
iff $\phi$ is partial computable and $\dom{\phi}$ is computable.
\end{prop}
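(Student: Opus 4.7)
The plan is to prove the two assertions directly from the definitions of $\leq_a$, $\oplus$, and $\iota$.

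First I would show $\lambda$ sits at the bottom. We must exhibit, for an arbitrary $\psi$, a total computable $g$ with $\lambda = (\iota\oplus\psi)\circ g$ (with matching convergence). Take $g$ to be the constant function $0$. Since $0$ is even, $(\iota\oplus\psi)(0)=\iota(0)$, which diverges by definition of $\iota$; and $\lambda(x)$ diverges for every $x$. Hence $\lambda\leq_m(\iota\oplus\psi)$, i.e., $\lambda\leq_a\psi$, so the $a$-degree of $\lambda$ is the least $a$-degree.

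Next I would compute $\iota\oplus\lambda$ explicitly to prepare for the characterization. On even inputs $2k$ it equals $\iota(k)$, which diverges at $k=0$ and equals $k-1$ for $k\geq 1$; on odd inputs $2k+1$ it equals $\lambda(k)$, which always diverges. So its domain is the positive even numbers, and on that domain it is the computable function $2k\mapsto k-1$.

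Now I would prove the characterization. Suppose $\phi\leq_a\lambda$, witnessed by a total computable $g$ with $\phi=(\iota\oplus\lambda)\circ g$. By the analysis above, $\phi(x)\converges$ iff $g(x)$ is a positive even number, a condition which is computable in $x$ since $g$ is total computable; so $\dom{\phi}$ is computable. On $\dom{\phi}$ we have $\phi(x)=g(x)/2 - 1$, which is computable in $x$, so $\phi$ is partial computable. Conversely, suppose $\phi$ is partial computable with $\dom{\phi}$ computable. Define
\[
g(x)=\begin{cases} 2\phi(x)+2, & x\in\dom{\phi};\\ 1, & x\notin\dom{\phi}.\end{cases}
\]
Since $\dom{\phi}$ is computable and $\phi$ is computable on its domain, $g$ is total computable. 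For $x\in\dom{\phi}$, $g(x)$ is a positive even number and $(\iota\oplus\lambda)(g(x))=\iota(\phi(x)+1)=\phi(x)$; for $x\notin\dom{\phi}$, $g(x)=1$ is odd, so $(\iota\oplus\lambda)(g(x))=\lambda(0)\diverges=\phi(x)$. Hence $\phi\leq_a\lambda$, completing the characterization.

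There is no genuine obstacle here; the proof is essentially a bookkeeping exercise. The only place that requires care is translating back and forth between the parity encoding used by $\oplus$ and the ``off-by-one'' shift built into $\iota$ (specifically that the shift $x\mapsto 2\phi(x)+2$, rather than $x\mapsto 2\phi(x)$, is what is needed to exploit $\iota(0)\diverges$ as a coded ``divergence signal'').
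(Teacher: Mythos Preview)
Your proof is correct and follows essentially the same approach as the paper's: both use the constant function $0$ to witness $\lambda\leq_a\psi$, both read off computability of $\dom{\phi}$ and of $\phi$ from the explicit description of $\iota\oplus\lambda$, and both build the converse reduction by sending $x\in\dom{\phi}$ to $2\phi(x)+2$. The only cosmetic difference is that the paper sends $x\notin\dom{\phi}$ to $0$ (landing on $\iota(0)\!\diverges$) whereas you send it to $1$ (landing on $\lambda(0)\!\diverges$); both choices work equally well.
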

\begin{pf}
For every partial function $\phi$, we have $\lambda\leq_a\phi$,
since the constant function $0$
serves as an $m$-reduction from $\lambda$ to $(\iota\oplus\phi)$.
For the forwards direction of the equivalence, let $g$ be an $m$-reduction from $\phi$
to $(\iota\oplus\lambda)$.  Then $x\in\dom{\phi}$ iff $g(x)$ is nonzero and even,
in which case $\phi(x)=(\iota\oplus\lambda)(g(x))=\frac{g(x)}{2}-1$,
which is computable.  For the converse, we define an $m$-reduction $h$
from $\phi$ to $(\iota\oplus\lambda)$ by:
$$ h(x)=\left\{\begin{array}{cl}
2\cdot (\phi(x)+1), & \text{if~}x\in\dom{\phi}\\
0, & \text{if not.}
\end{array}\right.$$
\qed\end{pf}

For characteristic functions of sets $A$ and $B$,
we have $\chi_A\leq_a\chi_B$ iff $A\leq_m (B\oplus\emptyset\oplus\omega)$.
We will not use this concept for sets, but we suggest writing
$A\leq_a B$ whenever $A\leq_m (B\oplus\emptyset\oplus\omega)$.
Under this definition, the computable sets (including
$\emptyset$ and $\omega$) form the least $a$-degree of sets.
The definition of $1$-reducibility for functions could be adapted
in the same way to produce a notion of \emph{augmented $1$-reducibility}
for functions, and likewise for sets, but certain questions arise about
the best way to adapt the definition, and we will not address them here.

We also consider reducibilities intermediate between $m$-reducibility
and Turing reducibility, again by analogy to such reducibilities on sets.

\begin{defn}
\label{defn:tt}
Let $\alpha$ and $\beta$ be total functions.  We say that
$\alpha$ is \emph{bounded-Turing reducible} to $\beta$,
or \emph{weak truth-table reducible} to $\beta$, if there
exists a Turing reduction $\Phi$ of $\alpha$ to $\beta$ and
a computable total function $f$ such that, in computing
each value $\alpha(x)$, the reduction $\Phi$ only asks the $\beta$-oracle
for values $\alpha(y)$ with $y < f(x)$.  Thus, for each $x$,
$\alpha(x)=\Phi^{\beta~\res\!~f(x)}(x)$.  Bowing to the two
distinct terminologies that exist for this notion on sets, we use two
notations for this concept:
$$ \alpha\leq_{\bT}\beta\hspace{5mm}
\text{and}\hspace{5mm}\alpha\leq_\wtt\beta.$$

If we use $D_e$ to denote the finite set with strong index $e$
(i.e.\ the index tells the size of $D_e$ and all of its elements),
then we say that $\alpha$ is \emph{truth-table reducible} to $\beta$, written
$\alpha\leq_\tt\beta$,
if there exist total computable functions $f$ and $g$ such that,
for every input $x$ to $\alpha$, we have $\alpha(x)=g(x,\beta\res
D_{f(x)})$.  This is different from the related reducibilities on
enumerations described by Degtev \cite{Degtev} --- in particular, it
is important for what follows in the present paper that $\alpha$ and
$\beta$ can have different ranges.

Finally, if $\alpha\leq_\tt\beta$ via $f$ and $g$ as above
and there exists some $k\in\omega$ such that $|D_{f(x)}|\leq k$
for every $x\in\omega$, then we say that $\alpha$ is
\emph{bounded truth-table reducible} to $\beta$
\emph{with norm $k$}, and write $\alpha\leq_\kbtt\beta$.
For $\alpha$ to be \emph{bounded truth-table reducible}
to $\beta$ (with no norm stated) simply means that
such a $k$ exists, and is written $\alpha\leq_\btt\beta$.
\end{defn}
It should be noted that, as with sets, the relation $\leq_\kbtt$
on function fails to be transitive, for $k>1$.  In general,
if $\alpha\leq_\normbtt{j}\beta$ and $\beta\leq_\kbtt\gamma$,
then $\alpha\leq_\normbtt{(jk)}\gamma$.

As mentioned, functions and their graphs have always
been conflated for purposes of Turing-reducibility.
For these finer reducibilities, the conflations no longer apply.
\begin{prop}
\label{prop:compbdd}
A total function $h$ is truth-table equivalent to
(the characteristic function of) its own graph
iff there exists a computable function $b$ such that, for every
$x$, we have $h(x)\leq b(x)$.  (In this case, $h$ is said to be
\emph{computably bounded}.)
\end{prop}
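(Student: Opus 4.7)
The plan is to verify both directions of the biconditional directly from the definition of truth-table reducibility given in Definition~\ref{defn:tt}.

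For the forward direction, I would suppose that $h \leq_\tt \chi_{G_h}$ via total computable $f$ and $g$, so that $h(x) = g(x, \chi_{G_h} \res D_{f(x)})$ for every $x$. The key observation is that, because $\chi_{G_h}$ takes values only in $\{0,1\}$, the restriction $\chi_{G_h} \res D_{f(x)}$ is one of at most $2^{|D_{f(x)}|}$ possible binary functions on $D_{f(x)}$. Since $g$ is total and computable, I define
\[
b(x) \;=\; \max\{\, g(x,\sigma) : \sigma \text{ is a binary function on } D_{f(x)}\,\},
\]
a uniformly computable maximum over a computable finite set of inputs. Then $h(x) \leq b(x)$ trivially, because the true value $h(x) = g(x, \chi_{G_h}\res D_{f(x)})$ is itself one of the values over which we take the max.

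For the backward direction, I would assume $h$ is bounded by a computable $b$ and establish the two $\tt$-reductions separately. To see $\chi_{G_h} \leq_\tt h$, on input $\langle x,y\rangle$ I take $D_{f(\langle x,y\rangle)} = \{x\}$ and define $g(\langle x,y\rangle,\sigma) = 1$ if $\sigma(x) = y$ and $0$ otherwise; this is in fact a $\onebtt$-reduction, since $\chi_{G_h}(\langle x,y\rangle) = 1$ iff $h(x) = y$. To see $h \leq_\tt \chi_{G_h}$, on input $x$ I first compute $b(x)$ and take $D_{f(x)} = \{\langle x,y\rangle : y \leq b(x)\}$. Since $h$ is total and $h(x) \leq b(x)$, the restriction $\chi_{G_h}\res D_{f(x)}$ contains exactly one $1$, namely at $\langle x, h(x)\rangle$. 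I then define $g(x,\sigma)$ to return the unique $y \leq b(x)$ with $\sigma(\langle x,y\rangle) = 1$ when such a $y$ exists and is unique, and $0$ otherwise; this is total computable and recovers $h(x)$ on the true oracle restriction.

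The main obstacle is in the forward direction, and specifically the correct use of the totality of the truth-table function $g$. One must remember that $g(x,\sigma)$ is required to be defined for every binary $\sigma$ on $D_{f(x)}$, not merely for the true restriction $\chi_{G_h}\res D_{f(x)}$. This totality, combined with the binary range of $\chi_{G_h}$, is precisely what converts an infinite search into a finite maximum and thereby produces a computable bound. Without the truth-table hypothesis, for instance if one only knew $h \leq_\bT \chi_{G_h}$, the computation of $g$ on spurious oracles might diverge and no such uniform bound would be forthcoming.
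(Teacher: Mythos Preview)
Your proof is correct and follows essentially the same approach as the paper's. Both directions match: the forward direction computes $b(x)$ as the maximum of $g(x,\sigma)$ over all binary $\sigma$ on $D_{f(x)}$, and the backward direction queries the graph on all pairs $\langle x,y\rangle$ with $y\leq b(x)$ for $h\leq_\tt\chi_{G_h}$, while the reduction $\chi_{G_h}\leq_\tt h$ is the obvious $\onebtt$ reduction (which the paper likewise observes holds without the boundedness assumption).
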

\begin{pf}
Let $G\subset\omega^2$ be the graph of $h$, and suppose
first that $h(x)\leq b(x)$ for all $x$.  Then, with a $G$-oracle,
a Turing machine on input $x$ can simply ask which pairs $(x,n)$
with $n\leq b(x)$ lie in $G$.  So we have stated in advance which
oracle questions will be asked, and by assumption there will be exactly one
positive answer, which will be the pair $(x,y)$ with $y=h(x)$.
Thus $h\leq_\tt G$, since we can also say in advance exactly
what answer the machine will give in response to each
possible set of oracle values.  On the other hand, to determine
whether $(x,y)\in G$, an oracle Turing machine only
needs to ask an $h$-oracle one question:  the value of $h(x)$.
Thus $\chi_G\leq_\tt h$ as well.  This latter reduction
is actually a bounded truth-table reduction of norm $1$,
under Definition \ref{defn:tt}, and holds even without
the assumption of computable boundedness of $h$.

For the forwards direction, suppose $h\equiv_\tt \chi_G$.
Then the computation of $h$, on input $x$, asks for the value $\chi_G(m,n)$
only for pairs with codes $\la m,n\ra\in D_{f(x)}$,
and outputs $g(x,\chi_G\res D_{f(x)})$, with $f$ and $g$
as in Definition \ref{defn:tt}.
Thus $h(x)$ must be one of the finitely many
values in the set $\Set{g(x,\sigma)}{\sigma\in 2^{|D_{f(x)}|}}$.
Since $f$ and $g$ are computable and total, we may take
$b(x)$ to be the maximum of this set, forcing $h(x)\leq b(x)$.
Thus $h$ is computably bounded.
\qed\end{pf}

In Proposition \ref{prop:compbdd},
\tt-equivalence cannot be replaced by \bT-equivalence.
The following proof of this fact was devised
in a conversation between E.\ Fokina and one of us,
and completes the answer to the question asked at the
beginning of this section.

\begin{lemma}[Fokina-Miller]
\label{lemma:Fokina}
There exists a total function $f$ which is not computably
bounded, yet is \bT-equivalent to (the characteristic function of)
its own graph $G$.
\end{lemma}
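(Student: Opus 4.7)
The easy direction $G\leq_{\bT} f$ is immediate with norm one: to decide whether $\langle a,b\rangle$ codes an element of $G$, I would query the oracle $f$ at $a$ and compare with $b$. For the other direction, the conceptual point I would exploit is that a \bT-reduction, unlike a \tt-reduction, is only required to halt and be correct on the intended oracle; on other oracles it may loop. Consequently the value $\Phi^G(x)$ is not constrained to lie in the range of any computably bounded function of $x$, even though the use is computably bounded --- indeed this is exactly the gap Proposition~\ref{prop:compbdd} creates between \tt and \bT. The strategy, then, is to hide the ``big'' values of $f$ at positions where the reduction performs an unbounded \emph{internal} computation rather than an unbounded oracle query.

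Concretely, I would fix any non-computable c.e.\ set $A$ with a standard computable enumeration $A_0\subseteq A_1\subseteq\cdots$, let $m(n)=\mu s\,[A_s\cap[0,n]=A\cap[0,n]]$ be its modulus function, and define
\[
f(2n)=\chi_A(n),\qquad f(2n+1)=m(n).
\]
The modulus $m$ is total, and a standard argument shows it is not computably bounded: if $m(n)\leq g(n)$ for some total computable $g$, then $\chi_A(n)=\chi_{A_{g(n)}}(n)$, making $A$ computable, a contradiction. Hence $f$ is total and not computably bounded.

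It remains to describe the \bT-reduction $f\leq_{\bT} G$ with a computable (in fact quadratic) use. On input $N$ I split on parity. If $N=2n$, I query $G$ at the two positions $\langle 2n,0\rangle$ and $\langle 2n,1\rangle$; exactly one of these is positive, and it reveals $f(N)=\chi_A(n)$. If $N=2n+1$, I first recover $A\cap[0,n]$ by running the even-case procedure for each $i\leq n$, using queries to $G$ all bounded by $\langle 2n,1\rangle$, hence by a computable function of $N$. Then, without any further oracle use, I internally simulate the enumeration of $A$ until the least stage $s$ with $A_s\cap[0,n]$ equal to the now-known $A\cap[0,n]$, and output $s=m(n)=f(N)$. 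The only subtle point is that this final simulation may diverge on an oracle whose initial segment disagrees with the true graph of $f$; but divergence on ``wrong'' oracles is permitted for \bT (and forbidden for \tt, consistently with Proposition~\ref{prop:compbdd}), so no priority argument is needed and the reduction is genuinely \bT.
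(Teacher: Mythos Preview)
Your proof is correct and rests on the same idea as the paper's: split $\omega$ into even and odd positions, store $\{0,1\}$-valued information about a noncomputable c.e.\ phenomenon at the even positions, and place the unbounded values at odd positions in such a way that each odd value can be recovered from finitely many (computably predictable) even positions via an unbounded internal search that is guaranteed to halt on the true oracle.

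The paper's construction differs only in its choice of functions and is slightly tighter. It sets $f(2x)=\chi_K(\langle x,2x+1\rangle)$ and $f(2x+1)=1+\phi_x(2x+1)$ (or $0$ if this diverges), so that a \emph{single} query to $G$ at $\langle 2x,0\rangle$ suffices both for $f(2x)$ and for $f(2x+1)$: the value $f(2x)$ tells you whether $\phi_x(2x+1)$ converges, and if it does you simply run it. By contrast, your modulus-based version needs roughly $2(n+1)$ queries to compute $f(2n+1)$, since you must first recover all of $A\cap[0,n]$. Both are valid \bT-reductions with computable use; the paper's version just achieves it with norm~$1$ on the oracle side, whereas yours has linearly growing (but still computably bounded) use. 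Conceptually there is no real difference: the modulus function is just another way of packaging the diagonalization that the paper performs directly against $\phi_x$.
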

\begin{pf}
Let $K=\Set{\la e,x\ra}{\phi_e(x)\converges}$ be the halting set.
Define $f(2x)=\chi_K(\la x,2x+1\ra)$ on the even numbers,
using the characteristic function $\chi_K$ of $K$, and on the odd
numbers, define
$$ f(2x+1)=\left\{\begin{array}{cl}1+\phi_x(2x+1), & \text{if~}\phi_x(2x+1)\converges,\\
0, & \text{if not.}
\end{array}\right.$$
Of course this $f$ is not computable, but it is total, and for each $x$,
the input $(2x+1)$ witnesses that $\phi_x$ is not an upper bound for $f$.
Moreover, to determine $f(2x)$ on even numbers,
we need only ask a $G$-oracle whether $\la 2x,0\ra\in G$.
To determine $f(2x+1)$ on odd numbers, we again ask
the oracle whether $\la 2x,0\ra\in G$.
If so, then $\chi_K(\la x,2x+1\ra)=f(2x)=0$, meaning that
$\phi_x(2x+1)\diverges$, and so we know that $f(2x+1)=0$.  If
$\la 2x,0\ra\notin G$,
then we know that $\chi_K(\la x,2x+1\ra)=f(2x)=1$, so $\la x,2x+1\ra\in K$,
and we simply compute $\phi_x(2x+1)$ (knowing that it must
converge) and add $1$ to get $f(2x+1)$.  In all cases,
therefore, we can compute $f(y)$ by asking a single
question of the $G$-oracle about whether a predetermined
value lies in $G$.  Thus $f\leq_\bT G$, and of course
$G\leq_\onebtt f$.
\qed\end{pf}

\section{Functions Approximable from Above}
\label{sec:AFA}

Having adapted several standard reducibilities on sets
to serve for functions as well, we now perform the same service
for the Ershov hierarchy.  Traditionally this has been
a hierarchy of $\emptyset'$-computable sets,
determined by computable approximations to those sets
and by the number of times the approximations ``change their mind''
about the membership of a given element in the set.
In our investigations of the distance functions on computable graphs,
we found that similar concepts arose, but pertaining
to functions, not to sets.  Therefore,
the following definitions provide total $\Delta^0_2$-functions with
their own Ershov hierarchy, and then add some further structure.

\begin{defn}
\label{defn:approx}
Let $f(x)=\lim_s g(x,s)$ be a total function from $\omega$ to $\omega$,
with the binary function $g$ total and computable.
\begin{itemize}
\item
If there is a total computable function $h$ such that
$$ (\forall x)~|\Set{s}{g(x,s)\neq g(x,s+1)}|\leq h(x),$$
then $f$ is \emph{$\omega$-approximable}.
\item
If the constant function $h(x)=n$ can serve as the $h$ in the previous item,
then $f$ is \emph{$n$-approximable}.
\item
More generally, if $\alpha$ is a computable ordinal and
there is a total computable nonincreasing function $h:\omega^2\to\alpha$ such that
$$ (\forall x\forall s)~[g(x,s)\neq g(x,s+1) \implies h(x,s)\neq h(x,s+1),$$
then $f$ is \emph{$\alpha$-approximable}.
\item
If, for all $x$ and $s$, we have $g(x,s+1)\leq g(x,s)$, then $f$ is \emph{approximable
from above}.  Such functions are also sometimes called \emph{limitwise decreasing}, \emph{semi-computable from above} or \emph{right c.e.}~functions
\item
If, for all $x$ and $s$, we have $g(x,s+1)\geq g(x,s)$, then $f$ is \emph{approximable
from below}.  In the literature, such functions have also been called
\emph{limitwise monotonic}, \emph{limitwise increasing}, and \emph{subcomputable}.
\item
When we combine these definitions, we assume that a single function $g$ satisfies
all of them.  For instance, $f$ is \emph{$3$-approximable from above} if $f$ is the
limit of a computable function $g$ such that,
for all $x$, $|\Set{s}{g(x,s)\neq g(x,s+1)}|\leq 3$
\emph{and} $(\forall s)~g(x,s+1)\leq g(x,s)$.
\item
Following \cite{CLR09}, we define $f$ to be \emph{graph-$\alpha$-c.e.}
if $\alpha$ is a computable ordinal and the graph of $f$ is an $\alpha$-c.e.\ set
in the Ershov hierarchy.
\end{itemize}
\end{defn}

In \cite{CLR09}, the term \emph{$\alpha$-c.e.\ function} was used for the functions
we are calling $\alpha$-approximable.  We prefer our terminology,
since the phrase ``c.e.\ function'' has been used elsewhere for
functions approximable from below.  (For such a function,
the set $\Set{(x,y)}{y\leq f(x)}$ is c.e.  This also explains the use of the term
\emph{subcomputable} for such functions.)

A characteristic function $\chi_A$ is approximable from below iff $A$
is c.e., and approximable from above iff $A$ is co-c.e.  The definitions
of approximability from below and from above may seem dual,
but in fact there are significant distinctions between them.
For an example, contrast the following easy lemma
with the well-known fact that there exist functions which
are approximable from below, but not $\omega$-approximable.

\begin{lemma}
\label{lemma:allomegace}
Every function approximable from above is $\omega$-approximable from above.
\end{lemma}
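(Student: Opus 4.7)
The plan is very short: exploit the fact that a nonincreasing sequence of nonnegative integers can change only finitely many times, and indeed at most as many times as its starting value.

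Suppose $f(x)=\lim_s g(x,s)$ with $g$ total computable and $g(x,s+1)\leq g(x,s)$ for all $x$ and $s$. Each stage $s$ at which $g(x,s)\neq g(x,s+1)$ must be a stage at which $g(x,s+1) < g(x,s)$, hence $g(x,s)$ drops by at least $1$. Since the values of $g(x,\cdot)$ lie in $\omega$ and are bounded below by $0$, the total number of such drops is at most $g(x,0)$. Thus
\[ |\Set{s}{g(x,s)\neq g(x,s+1)}|\leq g(x,0). \]

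First I would define $h(x)=g(x,0)$. This $h$ is total computable, because $g$ is. The same approximating function $g$ that witnesses approximability from above then witnesses $\omega$-approximability from above, with bound $h$, since Definition \ref{defn:approx} allows (indeed, requires) us to use a single $g$ to verify combined properties.

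There is no real obstacle; the only point worth flagging is that the argument uses the fact that the range of $f$ (and of the approximation $g$) lies in $\omega$, so that ``nonincreasing'' forces finitely many strict decreases. The conclusion would fail for real-valued approximations, but in our integer setting it is immediate. This gives the lemma in a single short paragraph of actual proof text.
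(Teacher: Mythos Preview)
Your proof is correct and is essentially identical to the paper's: both set $h(x)=g(x,0)$ and observe that a nonincreasing sequence in $\omega$ can decrease at most $g(x,0)$ times. The paper's version is simply more terse, omitting the explicit justification you give for why $h$ bounds the number of changes.
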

\begin{pf}
Let $f=\lim_s g$ with $g(x,s+1)\leq g(x,s)$ for all $x$ and $s$.
Then the computable function $h(x)=g(x,0)$ bounds the number
of changes $g$ can make.
\qed\end{pf}

On the other hand, the hierarchy of $n$-approximability
from above does not collapse.  (See also Corollary
\ref{cor:strictomega} below, which uses this lemma
to show non-collapse at the $\omega$ level, as well.)

\begin{lemma}
\label{lemma:nocollapse}
For every $n$, there is a function $f$ which is $(n+1)$-approximable
from above but not $n$-approximable.
\end{lemma}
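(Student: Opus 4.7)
The plan is to prove non-collapse by a direct diagonalization against every candidate $n$-approximation. Fix a strictly decreasing sequence of natural numbers $v_0 > v_1 > \cdots > v_{n+1}$ (for instance, $v_i = n+1-i$), let $g_0, g_1, \ldots$ enumerate all binary partial computable functions, and reserve a distinct input $x_e$ (for definiteness, $x_e = e$) for the diagonalization module $R_e$ aimed at $g_e$. On every input $x$ not of the form $x_e$, the construction will simply set $g(x,s) = 0$, so that $f(x) = 0$ with zero mind changes there.

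Each module $R_e$ maintains a counter $c_e \in \{0, 1, \ldots, n+1\}$, initialized to $0$, and at stage $s$ declares $g(x_e, s) = v_{c_e}$. At that stage, $R_e$ runs the computation of $g_e(x_e, s)$ for $s$ steps: if $c_e < n+1$ and this computation halts with output $v_{c_e}$, then $R_e$ increments $c_e$, thereby decreasing $g(x_e, \cdot)$ by one step at the next stage. Distinct modules act on disjoint inputs, so the construction is uniform, injury-free, and yields a computable nonincreasing approximation $g$.

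For the verification I would argue two things. First, on every $x_e$ the values of $g(x_e, \cdot)$ form a weakly decreasing subsequence of $v_0, v_1, \ldots, v_{n+1}$ with at most $n+1$ distinct entries, so $g$ witnesses that $f = \lim_s g(\cdot,s)$ is $(n+1)$-approximable from above. Second, $f$ cannot be $n$-approximable: suppose toward contradiction that some total computable $g_e$ is an $n$-approximation of $f$. If the counter $c_e$ ever reaches $n+1$, then $f(x_e) = v_{n+1}$; since $g_e(x_e, s)$ must eventually equal $v_{n+1}$, together with the stages at which it was observed to equal $v_0, v_1, \ldots, v_n$ the sequence $g_e(x_e, \cdot)$ takes $n+2$ distinct values across increasing stages, forcing at least $n+1$ mind changes. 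If instead $c_e$ stabilizes at some $k \leq n$, then $f(x_e) = v_k$, but $R_e$ never observed $g_e(x_e, s) = v_k$ at any stage, so $\lim_s g_e(x_e, s) \neq v_k = f(x_e)$. Either possibility contradicts the assumption on $g_e$.

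The one place needing care is the interaction between the stage-by-stage computation of $g_e$ and its possible partiality: if $g_e$ is total, then for every fixed $s$ the value $g_e(x_e, s)$ is eventually computed within some bounded number of steps, so the module sees every value that is ever taken, which is precisely what is needed to close off the stabilization case. Beyond this bookkeeping I do not anticipate any technical obstacle, and no priority ordering is required since the strategies are entirely independent.
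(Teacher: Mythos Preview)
Your approach is the same diagonalization the paper uses---start each column at $v_0=n+1$ and decrement whenever the opponent is caught matching the current value---but your dovetailing is set up incorrectly, and this invalidates the stabilization case of your verification.

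At stage $s$ the module $R_e$ examines only the single computation $g_e(x_e,s)$, and only for $s$ steps; it never revisits $g_e(x_e,s)$ at later stages. So a total $g_e$ whose computation on input $(x_e,s)$ is padded to take more than $s$ steps for every $s$ is simply invisible to $R_e$: the counter $c_e$ stays at $0$, hence $f(x_e)=v_0$, while nothing prevents $\lim_s g_e(x_e,s)=v_0$. Thus your claim in the second case---that ``$R_e$ never observed $g_e(x_e,s)=v_k$'' forces $\lim_s g_e(x_e,s)\neq v_k$---does not follow, and your final paragraph's assertion that totality makes the module ``see every value that is ever taken'' is exactly the step that fails. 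The repair is the standard one, and is what the paper does: at stage $s$ look at the longest initial segment $g_e(x_e,0),\ldots,g_e(x_e,t)$ that has converged by stage $s$, and compare its last entry to the current $v_{c_e}$. With that change (and nothing else) both branches of your verification go through, and the argument coincides with the paper's.
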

\begin{pf}
We define a computable function $g(x,s)$ and set $f(x)=\lim_s g(x,s)$.
To begin, $g(x,0)=n+1$ for every $x$.
At stage $s+1$, for each $x$, compute the sequence
$$ \phi_{x,s}(x,0),\phi_{x,s}(x,1),\ldots,\phi_{x,s}(x,t)$$
for the greatest $t\leq s$ such that all these computations
converge.  If $\phi_x(x,t)=g(x,s) >0$, set $g(x,s+1)=g(x,s)-1$;
otherwise set $g(x,s+1)=g(x,s)$.  Thus $f(x)=\lim_s g(x,s)$
is $(n+1)$-approximable from above, but if
$f(x)=\lim_s\phi_e(x,s)$, then $\phi_e(x,s)$ must
have assumed each of the values $(n+1),n,\ldots,1,0$,
and so $\phi_e$ is not an $n$-approximation to $f$.
\qed\end{pf}

The appropriate duality pairs functions approximable from above
with a subclass of the functions approximable from below, as follows.
\begin{defn}
\label{defn:dual}
Suppose that $g$ is computable and total, with $g(x,s+1)\leq g(x,s)$
for all $x$ and $s$, so that $f(x)=\lim_s g(x,s)$ is total and
approximable from above.  The \emph{dual of $g$} is the function
$$h(x,s)= g(x,0)-g(x,s).$$
Thus $j(x)=\lim_s h(x,s)$ is total, approximable from below (by $h$),
and bounded above by $g(x,0)$.  Moreover, $g$ is an $\alpha$-approximation
for $f$ iff $h$ is an $\alpha$-approximation for $j$.

Conversely, let $j$ be any function which is approximable from below
via $h(x,s)$ and \emph{computably bounded}:  that is, $j$ is such that
there exists a computable total
function $b$ with $j(x)\leq b(x)$ for all $x$.  Then the function
$g(x,s)=b(x)-h(x,s)$ is the \emph{dual of $h$ and $b$}.
\end{defn}

It is natural to call $j$ the dual of $f$, but in fact $j$ depends
on the choice of the approximation $g$:  two different approximations
$g$ and $\gtilde$ will often yield two different duals, though these
two duals always differ by a computable function, namely $(g(x,0)-\gtilde(x,0))$.
The dual of a function approximable from below also depends
on the choice of computable bound.  Nevertheless, it will be clear
from our results below that the class of computably bounded functions
approximable from below is the natural dual for the class of functions
approximable from above.  The computable upper bound in the former class
is the obvious counterpart of the built-in computable lower bound of $0$
for the latter class.

Functions approximable from below have seen wide usage in computable
model theory, for example in \cite{CCHM1,CCHM2,HMP05,KhisO,KhisHRM,KNS97}.
Our interest in functions
approximable from above arose from our investigations into the distance function on
a computable graph.  To our knowledge, this is the first significant use
of such functions in computable model theory, although, as we will mention,
they arise implicitly in the study of effectively algebraic structures and in certain
other contexts.  The best-known example of a function approximable from above
does not come from computable model theory at all:  it is the function of
Kolmogorov complexity (for any fixed universal machine), mapping
each finite binary string (coded as a natural number) to the shortest program
which the fixed machine can use to output that string.

It was a theorem of Khoussainov, Nies, and Shore in \cite{KNS97}
that there exists a $\Delta^0_2$ set which is not the range
of any function approximable from below.  The following theorem
contrasts with that result, giving a very concrete distinction
between approximability from above and from below.

\begin{thm}\label{thm:ranges}The range of every approximable
function is the range of some function which is $2$-approximable from above.
Indeed, the ranges of the $2$-approximable-from-above functions
are precisely the $\Sigma^0_2$ sets.
\end{thm}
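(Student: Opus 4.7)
The plan is to prove the stronger biconditional in the second sentence of the theorem, since the first sentence then follows at once: every approximable ($\Delta^0_2$) function has $\Sigma^0_2$ range, which by the biconditional is realized by some $2$-approximable-from-above function.

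First, for the easy direction, I would observe that if $f(x)=\lim_s g(x,s)$ with $g$ computable and non-increasing in $s$ and at most two mind changes per input $x$, then $y\in\rg{f}$ if and only if $\exists x\,\exists s\,\forall t\geq s\,[g(x,t)=y]$. This is a $\Sigma^0_2$ predicate, so $\rg{f}\in\Sigma^0_2$. (In fact this argument does not require the bound of two changes, only the non-increasing approximation.)

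For the converse, I would start with a nonempty $A\in\Sigma^0_2$ (a total function cannot have empty range, so the empty case is implicitly excluded) and fix a computable predicate $R$ with $A=\{y:\exists n\,\forall s\,R(n,y,s)=1\}$, arranged so that $R$ is non-increasing in $s$ with $R(n,y,0)=1$. The construction would use one input $\la n,y\ra$ per pair $(n,y)\in\omega^2$, with approximation $g(\la n,y\ra,s)$ that starts at a value greater than $y$, drops to $y$ at stage $1$ as the first mind change, and, at the first stage $s^*$ (if any) where $R(n,y,s^*)=0$, drops to a fallback value $F(n,y)<y$ as the second mind change. This yields at most two mind changes per input, and whenever $y\in A$ has a witness $n$ that never fails, the corresponding input settles at $y$, giving $A\subseteq\rg{f}$.

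The hard part will be choosing each fallback $F(n,y)$ so that it lies in $A$, which is what forces $\rg{f}\subseteq A$. My plan here is to enumerate the pairs $\la n,y\ra$ and have the fallback of each cascade to the committed limit of an earlier input already arranged to lie in $A$, anchored by a base-case input that is committed to a fixed element of $A$. A preliminary normalization of the representation of $A$, reducing to a case in which we can computably exhibit a specific element of $A$ (for example, by first joining $A$ with a single computable anchor value and then adjusting the construction to omit that value from $\rg{f}$ when it is not actually in $A$), provides the anchor. Combined, these yield a total $f$ that is $2$-approximable from above with $\rg{f}=A$, completing the proof of the biconditional.
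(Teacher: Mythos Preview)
Your easy direction and your overall plan for the hard direction (assign inputs to test candidate pairs, and fall back to a ``safe'' value when a test fails) are both correct. The gap is in the fallback step, and neither the cascading idea nor the normalization idea, as you have described them, closes it.

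The constraint you yourself identify is that the fallback $F(n,y)$ must satisfy both $F(n,y)<y$ (so that the approximation stays non-increasing) and $F(n,y)\in A$. Your cascade sends input $\la n,y\ra$ to ``the committed limit of an earlier input,'' but that limit is some $y'$ which may well exceed $y$; there is no mechanism forcing earlier-enumerated pairs to have smaller $y$-coordinate, so the cascade breaks monotonicity. Your normalization adjoins an artificial anchor (say $0$) and then proposes to ``omit that value from $\rg{f}$ when it is not actually in $A$''; but every input whose test fails lands on the anchor, so removing the anchor from the range would require redirecting all of those inputs somewhere else in $A$ --- which is exactly the problem you started with. In short, neither device produces, for each $y$, an element of $A$ strictly below $y$.

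The missing observation is simply that you may take the anchor to be the \emph{least} element $x_0$ of $A$, chosen non-uniformly. Since the theorem makes no uniformity claim, fixing the single natural number $x_0$ as a parameter is harmless: the approximation $g$ built from it is still computable. With $x_0=\min A$ as the universal fallback, every $y>x_0$ can legitimately drop to $x_0$, while inputs with $y\le x_0$ can simply sit at $x_0$ from the start; this is precisely how the paper proceeds (using a $1$-reduction $p$ to $\Fin$ and assigning inputs dynamically by stage rather than statically by pair, but that difference is inessential). Note too that once $x_0$ is the fallback, your preliminary drop from a value above $y$ down to $y$ is unnecessary: starting at $y$ and making at most one change already suffices, so the construction in fact yields a $1$-approximable-from-above function.
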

\begin{pf}
We prove the stronger statement.
Being in the range of a $2$-approximable-from-above function
is clearly a $\Sigma^0_2$ condition.
For the converse, let $S\in\Sigma^0_2$ have computable
$1$-reduction $p$ to the $\Sigma^0_2$-complete set $\Fin$, so that
$$ (\forall x)[x\in S \iff |W_{p(x)}|<\infty].$$
We will assume that at each stage $s$, there is exactly
one $x$ such that $W_{p(x),s+1}\neq W_{p(x),s}$,
and also that for every $x$, $W_{p(x)}\neq\emptyset$;
both of these conditions are readily arranged.
Fix the least $x_0\in S$.  At stage $0$ we define nothing.
At stage $s+1$, for each $x < s$, define
$$g(x,s+1) = \left\{\begin{array}{cl}
x_0, & \text{if~}g(x,s)=x_0\\
x_0, & \text{if~}W_{g(x,s),s+1}\neq W_{g(x,s),s}\\
g(x,s), & \text{otherwise.}
\end{array}\right.$$
Then, for the unique $y$ such that $W_{p(y),s+1}\neq W_{p(y),s}$, let
$$g(s,0)=g(s,1)=\cdots =g(s,s+1)=\left\{\begin{array}{cl}p(y), & \text{if~}p(y)\geq x_0\\
x_0, & \text{if not.}
\end{array}\right.$$
This defines $g$ effectively on all of $\omega\times\omega$,
and for every $x$, $g(x,s)$ is either $x_0$ for all $s$,
or $p(y)$ for all $s$ (where $y$ was chosen at stage $x+1$),
or else $p(y)$ for $s=0,1,\ldots,n$ and then $x_0$ for all
$s>n$.  This last holds iff $p(y)>x_0$ and $W_{p(x)}$ received a new
element at some stage $n+1>x+1$.  So
clearly $g$ has a limit $f(x)=\lim_s g(x,s)$
and approximates that limit from above, with at most one change.
Moreover, if $y\in S$, then $p(y)\in\Fin$, and so when
(the nonempty set) $W_{p(y)}$ receives its last element,
say at stage $x+1$, then $x$ will have $g(x,s)=p(y)$ for all $s>x$,
making $S\subseteq\rg{f}$.  Conversely, if $x\notin S$,
then $p(y)\notin\Fin$, so every $x$ which ever had $g(x,s)=p(y)$
will eventually get changed and will have $f(x)=x_0$;
thus $\rg{f}\subseteq S$.
\qed\end{pf}

%


\begin{cor}There is a function $2$-approximable from above, the range of which
is not the range of any function approximable from below.
\end{cor}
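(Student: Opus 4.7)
The plan is to combine Theorem \ref{thm:ranges} directly with the cited result of Khoussainov, Nies, and Shore from \cite{KNS97}: there exists a $\Delta^0_2$ set $S$ which is not the range of any function approximable from below. Since $\Delta^0_2 \subseteq \Sigma^0_2$, this $S$ is in particular $\Sigma^0_2$.

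Applying the stronger half of Theorem \ref{thm:ranges} (the statement that every $\Sigma^0_2$ set is the range of some $2$-approximable-from-above function), we obtain a function $f$ which is $2$-approximable from above and satisfies $\rg{f} = S$. By the choice of $S$, no function approximable from below has range equal to $\rg{f}$, so $f$ is the desired witness.

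There is no real obstacle here: the work was done in the preceding theorem and in the cited paper. The only thing to verify is that the $\Delta^0_2$ set produced in \cite{KNS97} is indeed $\Sigma^0_2$, which is immediate. If one wished to avoid quoting \cite{KNS97}, one would instead need to construct directly a $\Sigma^0_2$ set diagonalizing against every computable binary $h$ with $h(x,s+1)\geq h(x,s)$, ensuring that $\rg{\lim_s h(x,s)}$ differs from the constructed set; but this is exactly the content of the Khoussainov--Nies--Shore argument, so invoking it is cleaner.
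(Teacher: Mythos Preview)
Your proof is correct and is essentially identical to the paper's own argument: the paper likewise declares the corollary immediate from Theorem~\ref{thm:ranges} combined with the Khoussainov--Nies--Shore result from \cite{KNS97} that some $\Delta^0_2$ set is not the range of any function approximable from below. Your additional remark that $\Delta^0_2\subseteq\Sigma^0_2$ just makes explicit the one-line observation the paper leaves implicit.
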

\begin{pf}
This is immediate from Theorem \ref{thm:ranges} in conjunction
with a result in \cite{KNS97} giving the existence of a $\Delta^0_2$ set
which is not the range of any function approximable from below.
\qed\end{pf}

\begin{cor}
\label{cor:Sigma2}
There exists a function that is $2$-approximable from above
whose range is $\Sigma^0_2$-complete.
\qed\end{cor}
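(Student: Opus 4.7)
The plan is to invoke Theorem~\ref{thm:ranges} directly, applied to a known $\Sigma^0_2$-complete set. Specifically, the set $\Fin=\Set{e}{W_e\text{ is finite}}$ is $\Sigma^0_2$-complete; this is a standard fact used in the proof of Theorem~\ref{thm:ranges} itself, where $\Fin$ served as the target of the $1$-reduction $p$.

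By Theorem~\ref{thm:ranges}, since $\Fin$ is a $\Sigma^0_2$ set, there exists a function $f$ which is $2$-approximable from above and satisfies $\rg{f}=\Fin$. Since $\Fin$ is $\Sigma^0_2$-complete and the range of $f$ equals $\Fin$, the range of $f$ is $\Sigma^0_2$-complete. This is the desired function.

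There is no real obstacle: all the work has already been done in Theorem~\ref{thm:ranges}, which established the exact characterization of ranges of $2$-approximable-from-above functions as the $\Sigma^0_2$ sets. The corollary is simply the observation that this characterization, applied to a complete member of the class, yields a function whose range is itself complete. One could equally well apply the theorem to any other $\Sigma^0_2$-complete set (for example $\overline{\text{Tot}}$ or $\overline{\text{Cof}}$'s complement, etc.), but $\Fin$ is the most natural choice given its role in the preceding proof.
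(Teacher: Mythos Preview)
Your proposal is correct and matches the paper's approach: the corollary is marked with an immediate \qed\ following Theorem~\ref{thm:ranges}, and your argument---apply the theorem to a $\Sigma^0_2$-complete set such as $\Fin$---is exactly the intended one-line justification.
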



\begin{thm}
\label{thm:countdown}
Every $\omega$-approximable function $f$ is \bT-reducible
to some function approximable from above.
(This approximation from above is known as the
\emph{countdown function} for $f$.)
\end{thm}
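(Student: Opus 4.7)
The plan is to let $g(x,s)$ be a computable approximation witnessing that $f$ is $\omega$-approximable, with computable bound $h(x)$ on the number of changes, i.e.\ $|\{s : g(x,s)\neq g(x,s+1)\}|\leq h(x)$ for all $x$. I define the countdown function $F$ via the approximation
$$ F_s(x) \;=\; h(x)\;-\;c(x,s), \qquad\text{where}\qquad c(x,s)=\bigl|\{t<s: g(x,t)\neq g(x,t+1)\}\bigr|. $$
Since $c(x,\cdot)$ is nondecreasing in $s$ and bounded by $h(x)$, the approximation $F_s$ is nondecreasing backwards in $s$, i.e.\ $F_{s+1}(x)\leq F_s(x)$, and it stays in $[0,h(x)]$, so the limit $F(x)=h(x)-c(x)$ exists (where $c(x)$ is the total number of changes $g$ makes at $x$) and $F$ is approximable from above via the computable $F_s$.

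Next I describe the $\bT$-reduction $f\leq_\bT F$. On input $x$, the machine queries the $F$-oracle once, at position $x$, obtaining $F(x)$; this determines $c(x)=h(x)-F(x)$ since $h$ is computable. The machine then computes $g(x,0),g(x,1),\ldots$ until it has observed exactly $c(x)$ many changes in the sequence, and outputs the value of $g(x,s)$ at that stage. This search terminates because $g$ does make exactly $c(x)$ many changes at $x$, and the output equals $f(x)$ because after the final change $g(x,s)$ has stabilized to $f(x)$. The oracle use is trivially computably bounded (indeed, only the single value $F(x)$ is queried), so this is even a $\onebtt$-reduction, in particular a $\bT$-reduction.

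There is no real obstacle here: the only point requiring even a moment's thought is confirming that once $c(x)$ is known, one can recover $f(x)$ computably from $g$, which follows simply from the fact that $c(x)$ is by construction the exact number of changes the approximation $g$ makes at $x$, so as soon as that many changes have been observed, the approximation has reached its limit value.
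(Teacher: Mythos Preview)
Your argument is correct and essentially identical to the paper's: your $F_s(x)=h(x)-|\{t<s:g(x,t)\neq g(x,t+1)\}|$ is exactly the paper's countdown $c(x,s)$ written in closed form rather than recursively, and your reduction (query $F(x)$, recover the exact number of changes, then run $g$ until that many changes have occurred) is the same as the paper's.

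One overclaim, though: the reduction is \emph{not} in general a $\onebtt$-reduction. A $\btt$-reduction requires the output function $g(x,\sigma)$ to be total computable on \emph{all} possible oracle answers $\sigma$, not just the true one. In your procedure, if the oracle were to return some value $v<F(x)$, you would compute $h(x)-v$, which exceeds the actual number of changes $g$ makes at $x$, and your search for that many changes would diverge. So the procedure is only guaranteed to halt when fed the correct oracle value; this is precisely $\bT$ (bounded use) but not $\tt$ or $\btt$. The paper makes this exact observation immediately after its proof: without knowing $\lim_s c(x,s)$ in advance, one cannot bound how far out in $s$ one must compute $g(x,s)$ for each hypothetical oracle answer. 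Your justification (``only the single value $F(x)$ is queried'') establishes bounded use, which gives $\bT$, but bounded use alone does not give $\btt$.
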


\begin{proof}  Let $f(x)$ be an $\omega$-approximable function,
approximated by $g(x,s)$, with computable function $h(x)$ bounding
the number of mind changes of $f(x,s)$.  Set $c(x,0)=h(x)$.
Let $c(x, s+1)=c(x,s)$ unless $g(x,s)\neq g(x, s+1)$, and
in that case set $c(x,s+1)=c(x,s)-1$.  Then $\lim_s c(x,s)$
is approximable from above and $f(x)$ is computable from this limit,
since $f(x)=f(x,t)$ for each $t$ with $c(x,t)=\lim_s c(x,s)$.
	
Our reasons for referring to this $c$ as the \emph{countdown function}
for $f$ (or, strictly speaking, for $g$ and $h$, since $c$ does
depend on the approximation and the computable bound) are clear.
It is important to distinguish the countdown function
$c(x,s)$, which is computable, from its limit $\lim_s c(x,s)$,
which in general is not computable (and was just shown to Turing-compute $f$).
Indeed, we have $f\leq_{\bT} \lim c$, since the only value
of the limit required to compute $f(x)$ is $\lim_s c(x,s)$.
\end{proof}

On the other hand, this is not in general a truth-table
reduction.  For that, one would need to
predict in advance what the value of $f(x)$ will be
for every possible value of $\lim_s c(x,s)$ between
$0$ and $c(x,0)$.  Without knowing $\lim_s c(x,s)$ in advance,
one cannot be sure for how many values of $s$
we may need to compute $g(x,s)$ to determine these answers.

The limit of $c$ is not in general Turing-reducible to $f$.
However, if $g$ is either an approximation from above or
an $\omega$-approximation from below, then $f\equiv_T \lim_s c(\cdot,s)$,
and indeed $f\equiv_{\bT}\lim_s c(\cdot,s)$, since the computation
of $\lim_s c(x,s)$ only requires us to ask the oracle for the value $f(x)$.
(Once an approximation from above or from below abandons a value,
it cannot later return to that value, and so, once $f(x)=g(x,t)$,
we know that $c(x,t)=\lim_s c(x,s)$.)  However, even for
approximations from above and $\omega$-approximations from below,
we have in general that $\lim c\not\leq_\tt f$, since we cannot
determine the final value of the countdown without actually
knowing the value $f(x)$.



\begin{thm}
\label{thm:1complete}
There is a function $f$ that is $1$-complete
within the class of all functions approximable from above.
\end{thm}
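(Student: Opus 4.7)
The plan is to mimic the standard construction of a $1$-complete c.e.\ set, adapted to functions approximable from above. I would build a single total computable binary function $G(n,s)$ that simultaneously processes every candidate approximation from above, and then set $f(n) = \lim_s G(n,s)$.

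Specifically, identify each natural number with a triple $\la e,x,N\ra$ via a computable pairing. Here $e$ serves as a candidate index for an approximation from above, $x$ is an input to the approximated function, and $N$ is an explicit initial upper bound. Writing $\phi_{e,s}$ for the $s$-step approximation to the binary partial computable function $\phi_e$, I would define
\[
G(\la e,x,N\ra,\,s) \;=\; \min\bigl(\{N\}\cup\{\phi_{e,s}(x,t) : t\le s\text{ and }\phi_{e,s}(x,t)\converges\}\bigr),
\]
and set $f(n) = \lim_s G(n,s)$. The routine verifications are that $G$ is total and computable (the minimum at each stage is over a finite, effectively presented set), that $G(n,\cdot)$ is nonincreasing in $s$ (enlarging $s$ only enlarges the set over which we minimize), and hence that $f$ is total and approximable from above, with $G$ as an explicit witness.

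For $1$-completeness, I would take any $h$ approximable from above, fix a computable approximation $g$, and fix an index $e$ with $\phi_e = g$. Setting $r(x) = \la e,x,g(x,0)\ra$ produces a reduction that is plainly total, computable, and injective (distinct $x$ yield distinct triples). A short unwinding gives
\[
f(r(x)) \;=\; \min\bigl(g(x,0),\,\inf_t g(x,t)\bigr) \;=\; h(x),
\]
using that $g$ is nonincreasing with limit $h(x)$ and $g(x,0)\ge h(x)$. Hence $h\le_1 f$.

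The main obstacle I anticipate is that nothing forces an arbitrary index $e$ to specify a total nonincreasing $\phi_e$, and different target functions $h$ come with approximations on unboundedly different scales. Both issues are absorbed by parameters built into the input triple: the explicit $N$ soaks up the otherwise unbounded initial value $g(x,0)$ of any legitimate approximation, while substituting $\phi_{e,s}$ for $\phi_e$ keeps $G$ total regardless of how badly $\phi_e$ behaves. When $\phi_e$ is not a legitimate approximation from above, $f$ will still assign \emph{some} value at $\la e,x,N\ra$, but such values play no role in the $1$-reductions we construct.
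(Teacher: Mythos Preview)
Your proof is correct. Both you and the paper construct a universal approximation from above, but the mechanics differ. The paper indexes columns by pairs $\la e,x\ra$ and handles the two obstacles---that $\phi_e(x,0)$ may diverge and that the sequence $\phi_e(x,\cdot)$ may fail to be nonincreasing---dynamically: it waits for $\phi_e(x,0)$ to converge before assigning a fresh column $n_{\la e,x\ra}$ and setting the initial value there to $\phi_e(x,0)$, and thereafter it freezes the approximation on that column as soon as it observes any $\phi_e(x,t'+1)>\phi_e(x,t')$. You instead bake an explicit upper bound $N$ into the input triple and replace the monotonicity check by a running minimum over all values seen so far, which is automatically nonincreasing regardless of how $\phi_e$ behaves. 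Your route is a bit slicker---no staged column-assignment, no case split on whether the observed initial segment is still monotone---at the cost of a third coordinate in the coding; both reductions are equally uniform in the index $e$ of the given approximation, since your $g(x,0)$ is just $\phi_e(x,0)$.
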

\begin{pf}
We construct $f$ by constructing a computable function $g$ approximating
$f$ from above.  At stage $0$, $g$ is undefined on all inputs.

At stage $s+1$, find the least pair $k=\la e,x\ra$ (if any) such that
$\phi_{e,s}(x,0)\converges$ and $n_k$ is undefined.  Let $n_k$ be the least
element such that $g(n_k,0)$ was undefined as of stage $s$, and set
$$g(n_k,0)=g(n_k,1)=\cdots =g(n_k,s+1)=\phi_{e,s}(x,0).$$

Then (whether or not such a $k$ existed), for each $j=\la e,x\ra$
such that $g(n_j,0)$ was defined by stage $s$, we consider the sequence
$\phi_{e,s}(x,0),\ldots,\phi_{e,s}(x,t)$, for the greatest $t\leq s$
such that all these computations converge.  If this finite sequence
is nonincreasing, we set $g(n_j,s+1)=\phi_e(x,t)$.  Otherwise
(that is, if $\phi_e(x,t'+1)>\phi_e(x,t')$ for some $t'<t$),
we set $g(n_j,s+1)=g(n_j,s)$.

This completes the construction of $g$.  Clearly, every $n$ is chosen
at some stage to be $n_k$ for some $k$, and subsequently
$g(n_k,s)$ is defined for each $s$, so $g$ is total and computable.
Moreover, by construction, $g(n_k,s+1)\leq g(n_k,s)$ for every $k$ and $s$.
So the function $f(n)=\lim_s g(n,s)$ is approximable from above.

Now let $h$ be any function which is approximable from above,
say by $h(x)=\lim_s\phi_e(x,s)$.  Then, for every $x$,
$\phi_e(x,0)$ converges at some finite stage, and so some
$n_k$ with $k=\la e,x\ra$ is eventually chosen.  The function
$d_e$ mapping $x$ to this $n_{\la e,x\ra}$ (for this fixed $e$)
is computable (since we can simply wait until $n_{\la e,x\ra}$ is defined)
and total, and also 1-1, since $n_j\neq n_k$ for $j\neq k$.
Now since $\phi_e$ approximates $h$ from above, the sequence
$$ \phi_e(x,0),\phi_e(x,1),\phi_e(x,2),\ldots,\phi_e(x,t),\ldots $$
is infinite and nonincreasing.  So the sequence
$$ g(n_k,0),g(n_k,1),g(n_k,2),\ldots,g(n_k,s),\ldots$$
is exactly the same sequence, by construction, except that
numbers which occur finitely often in one sequence might occur
a different finite number of times in the other sequence.
This shows that
$$ f(d_e(x)) = f(n_k) = \lim_s g(n_k,s) = \lim_t \phi_e(x,t) = h(x),$$
so $f\circ d_e = h$, proving $h\leq_1 f$ via $d_e$.
Indeed, therefore, the $1$-reduction may be found uniformly
in the index of a computable approximation to $h$ from above.
\qed\end{pf}

Theorem \ref{thm:1complete} gives an easy proof of a result
which we could have shown by the method from Lemma
\ref{lemma:nocollapse}.

\begin{cor}
\label{cor:strictomega}
There exists a function which is $\omega$-approximable from above,
but (for every $n\in\omega$) is not $n$-approximable.
\end{cor}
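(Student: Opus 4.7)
The plan is to derive the corollary directly from Theorem \ref{thm:1complete} combined with Lemmas \ref{lemma:allomegace} and \ref{lemma:nocollapse}. Let $f$ be the $1$-complete function from Theorem \ref{thm:1complete}, which is approximable from above. By Lemma \ref{lemma:allomegace}, $f$ is $\omega$-approximable from above, so in particular $\omega$-approximable. It remains to show $f$ is not $n$-approximable for any fixed $n \in \omega$.

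The core observation is that $n$-approximability is preserved under $1$-reduction (in fact, under $m$-reduction): if $\phi = \psi \circ d$ with $d$ total computable, and $g(\cdot,\cdot)$ is an $n$-approximation to $\psi$, then $\tilde g(x,s) = g(d(x),s)$ is an $n$-approximation to $\phi$, since any mind-change for $\tilde g$ on input $x$ is a mind-change for $g$ on input $d(x)$.

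Now suppose, for contradiction, that $f$ is $n$-approximable for some fixed $n$. By Lemma \ref{lemma:nocollapse}, there exists a function $h$ that is $(n+1)$-approximable from above but not $n$-approximable. Since $h$ is approximable from above, Theorem \ref{thm:1complete} gives $h \leq_1 f$ via some total computable function. By the preservation observation above, $h$ inherits $n$-approximability from $f$, contradicting the choice of $h$. Hence $f$ is not $n$-approximable for any $n$.

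The argument has essentially no obstacle: once one notices that $m$-reductions cannot increase the mind-change count, the $1$-completeness from Theorem \ref{thm:1complete} transports the non-collapse witnesses from Lemma \ref{lemma:nocollapse} all the way up to $f$ uniformly in $n$, giving a single function that defeats every level of the finite Ershov hierarchy at once.
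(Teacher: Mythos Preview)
Your proof is correct and follows essentially the same approach as the paper: both take the $1$-complete function $f$ from Theorem \ref{thm:1complete}, observe that $n$-approximability is preserved under $1$-reduction (via $g(d(x),s)$), and derive a contradiction with Lemma \ref{lemma:nocollapse}. Your explicit invocation of Lemma \ref{lemma:allomegace} to verify that $f$ is $\omega$-approximable from above is a small clarification the paper leaves implicit.
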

\begin{pf}
If the $1$-complete function $f$ from Theorem \ref{thm:1complete}
were $n$-approximable, say via $g(x,s)$, then since every function
approximable from above has a $1$-reduction $h$ to $f$, we would have that every such function is $n$-approximable (via $g(h(x),s)$).  This contradicts Lemma \ref{lemma:nocollapse}.
\qed\end{pf}

Theorem \ref{thm:1complete} stands in contrast to the following results.

\begin{thm}
\label{thm:no1complete}
No function is $m$-complete for the class of functions approximable from below.
\end{thm}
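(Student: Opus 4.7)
The plan is a direct diagonalization against every potential $m$-reduction. Suppose for contradiction that $f$ is $m$-complete within the class of functions approximable from below. Then $f$ itself belongs to the class, so I fix a total computable $\ftilde$ with $\ftilde(x,s+1)\geq\ftilde(x,s)$ and $f(x)=\lim_s\ftilde(x,s)$. I will construct a single approximable-from-below function $h$ which, at input $e$, defeats the candidate reduction $\phi_e$, so that no total computable $\phi_e$ can satisfy $h=f\circ\phi_e$.

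The construction is explicit: set
$$ \tilde{h}(e,s)=\left\{\begin{array}{cl} \ftilde(\phi_{e,s}(e),s)+1, & \text{if~}\phi_{e,s}(e)\converges, \\ 0, & \text{otherwise,} \end{array}\right.$$
and put $h(e)=\lim_s\tilde{h}(e,s)$. A short case analysis shows that $\tilde{h}(e,\cdot)$ is nondecreasing: once $\phi_e(e)$ converges to a value $y$ at some stage $s_0$, we have $\tilde{h}(e,s)=\ftilde(y,s)+1$ for all $s\geq s_0$, which is nondecreasing in $s$ by monotonicity of $\ftilde$, while the transition at stage $s_0$ itself is only upward from $0$. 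Hence $h$ is total and approximable from below.

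For the contradiction, assume some total computable $\phi_e$ satisfies $h=f\circ\phi_e$. Totality forces $\phi_e(e)\converges$, say to $y$, so from some stage onward $\tilde{h}(e,s)=\ftilde(y,s)+1$; passing to the limit gives $h(e)=f(y)+1$, whereas $(f\circ\phi_e)(e)=f(y)$, contradicting $h(e)=(f\circ\phi_e)(e)$.

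I do not anticipate a serious obstacle here. The construction is the natural dual of the one in Theorem \ref{thm:1complete}, and succeeds precisely because the trivial uniform lower bound $0$ lets $\tilde{h}$ begin small and climb to exactly one more than the (approximable-from-below) target $f(\phi_e(e))$. No corresponding uniform upper bound is available in the above-approximable setting, which accounts for the asymmetry between this theorem and Theorem \ref{thm:1complete}.
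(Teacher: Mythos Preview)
Your proof is correct and follows essentially the same diagonalization as the paper: both construct an approximable-from-below function whose value at an input coding $e$ is forced to equal $f(\phi_e(\cdot))+1$, defeating $\phi_e$ as an $m$-reduction. The paper indexes its function on pairs $\langle e,x\rangle$ rather than on $e$ alone, but this is cosmetic; your single-witness version is a minor simplification of the same argument.
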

\begin{pf}
Let $f$ be any function with a computable approximation $g(x,s)$ of $f$ from below.
We build a computable approximation $h$ from below
to a function $j$ which cannot be $m$-reducible to $f$.  Define $h(\la e,x\ra,0)=0$
for every $e$ and $x$.

At stage $s+1$, if $\phi_{e,s}(\la e,x\ra)\diverges$, set $h(\la e,x\ra,s+1)=0$.
Otherwise, set $h(\la e,x\ra,s+1)=1+g(\phi_e(\la e,x\ra),s+1)$.
By hypothesis on $g$, this $h$ is clearly nondecreasing in $s$,
and since $\lim_s g(\phi_e(\la e,x\ra,s))=f(\la e,x\ra)$ must exist,
we see that
$$j(\la e,x\ra) =\lim_s h(\la e,x\ra,s) =1+\lim_s g(\phi_e(\la e,x\ra),s)
=1+f(\phi_e(\la e,x\ra)).$$
However, this shows that either $j\neq f\circ\phi_e$ or else $\phi_e$
is not total, so $j\not\leq_m f$.
\qed\end{pf}

\begin{thm}
\label{thm:no_mcomplete_for_omegace}
For every function $f$ approximable from above, there exists
a function $g$ which is $2$-approximable from below and has $g\not\leq_m f$.
\end{thm}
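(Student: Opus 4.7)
The plan is to diagonalize against every candidate total computable $\phi_e$ by assigning a follower $n_e := e$ and ensuring $g(n_e) \neq f(\phi_e(n_e))$. First I would fix a computable approximation $\tilde{f}(y,s)$ of $f$ from above, so $\tilde{f}(y,s+1)\leq\tilde{f}(y,s)$ and $f(y)=\lim_s\tilde{f}(y,s)$. The function $g$ will be built as the limit of a computable nondecreasing approximation $\hat{g}(x,s)$ that changes value at most once per input, which makes $g$ actually $1$-approximable (hence comfortably $2$-approximable) from below.

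At stage $0$ set $\hat{g}(x,0)=0$ for every $x$. At stage $s+1$, for each $e\leq s$, check whether $\phi_{e,s}(n_e)\converges$ with value $y_e$ and whether $\tilde{f}(y_e,s)=0$; if both hold, set $\hat{g}(n_e,s+1)=1$ permanently, otherwise keep $\hat{g}(n_e,s+1)=\hat{g}(n_e,s)$. Since the followers $n_e$ are distinct and each is incremented at most once, the resulting $g=\lim_s \hat g(\cdot,s)$ satisfies the claimed approximability condition.

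For verification, suppose towards a contradiction that $g = f\circ\phi_e$ for some total $\phi_e$, and let $y_e = \phi_e(n_e)$. If $f(y_e) = 0$, then because $\tilde{f}(y_e,\cdot)$ is a nonincreasing $\omega$-valued sequence with limit $0$, eventually $\tilde{f}(y_e,s)=0$, and the construction sets $g(n_e)=1\neq 0 = f(y_e)$. If instead $f(y_e)\geq 1$, then $\tilde{f}(y_e,s)\geq 1$ for all $s$, so the trigger never fires and $g(n_e)=0\neq f(y_e)$. Either case contradicts $g = f\circ\phi_e$, proving $g\not\leq_m f$; partial $\phi_e$ are not $m$-reductions and so need no requirement.

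The key conceptual point is that approximability of $f$ from above supplies a computably recognizable event (namely ``$\tilde{f}(y,s)=0$'') that detects precisely the case $f(y)=0$, and a single mind-change per follower is exactly tuned to react to it. Because the strategy at each follower acts only on its own input and only once, the requirements never interfere, and the theorem's budget of two mind-changes is quite loose for this construction. I therefore do not foresee a substantive obstacle; the only delicate point is checking that the single ``$\tilde{f}=0$'' test is actually sufficient to separate $g(n_e)$ from $f(y_e)$ in both cases, which is what the verification above handles.
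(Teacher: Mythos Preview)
Your proof is correct and takes a genuinely different route from the paper's. The paper acts the moment $\phi_e(e)$ converges to some $y_e$, say at stage $s$: it sets $g(e,t)=\tilde f(y_e,s)+1$ for all $t\geq s$, relying on the single inequality $f(y_e)\leq\tilde f(y_e,s)<\tilde f(y_e,s)+1$ to guarantee $g(e)\neq f(\phi_e(e))$. You instead wait for the $\Sigma^0_1$ event ``$\tilde f(y_e,s)=0$'' and only then bump $g(e)$ from $0$ to $1$, handling the two cases $f(y_e)=0$ and $f(y_e)\geq 1$ separately. Both constructions change each value at most once, so both actually yield a $1$-approximable-from-below function, comfortably inside the stated budget. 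What your approach buys is that the resulting $g$ is $\{0,1\}$-valued---in fact the characteristic function of a c.e.\ set---whereas the paper's $g$ is potentially unbounded; what the paper's approach buys is a verification with no case split. Either way the same structural feature of approximability from above is doing the work: a single finite-stage observation about $\tilde f(y_e,\cdot)$ suffices to separate $g(e)$ from $f(y_e)$.
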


	\begin{proof}	
	Given a function $f$ that is approximable from above, we construct an $\omega$-approximable function, $g$, that $f$ fails to $m$-compute.  To achieve this, we assume that $f(x)=\lim_s \varphi_f(x,s)$, and consider possible witnesses for an $m$-reduction, diagonalizing against them.  We must meet the following requirement for each $e\in \omega$: $$R_e: ~ ~ \varphi_e \textrm{ is total }\implies \exists x ~ f(\varphi_e(x))\neq g(x).$$
	
	Set $g(x,0)=0$ for each $x\in \omega$.  If
        $\varphi_e(e)\downarrow = y_e$ at stage $s$, set
        $g(e,t)=f(y_e,s)+1$ for all $t\geq s$.  For $t>s$, and any
        $y$, we have $f(y,t)<f(y,s)$, so once $R_e$ receives attention, it is forever satisfied.  Furthermore, $f(e,0)$ serves as a computable upper bound for $g(e,s)$ so $g$ is computably bounded, and the number of mind changes is at most 2.

	\end{proof}

The following result suggests the insufficiency of Turing reducibility,
and even of bounded Turing reducibility,
to classify functions approximable from above.
\begin{prop}
\label{prop:cedegree}
Every function approximable from above is bounded-Turing equivalent
to (the characteristic function of) a computably enumerable set,
and every c.e.\ set is \bT-equivalent to a function approximable from above.
\end{prop}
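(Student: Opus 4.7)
The plan is to handle the two directions more or less independently and by direct coding.

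For the first direction, given $f$ approximable from above via a nonincreasing computable approximation $g(x,s)$, I would define
\[ A = \Set{\la x,n\ra}{f(x)<n}. \]
This set is c.e.\ via the obvious enumeration: put $\la x,n\ra$ into $A$ at stage $s$ as soon as $g(x,s)<n$. Since $g$ is nonincreasing, $f(x)\leq g(x,0)$ for all $x$, so $\la x,g(x,0)+1\ra$ is always eventually enumerated, and the unique $n\leq g(x,0)$ with $\la x,n\ra\notin A$ but $\la x,n+1\ra\in A$ is exactly $f(x)$. This gives a $\bT$-reduction of $f$ to $\chi_A$ whose use on input $x$ is the computable quantity $\la x,g(x,0)+1\ra$. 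Conversely, to decide $\la x,n\ra\in A$ from $f$, one asks the single oracle value $f(x)$ and compares to $n$, giving $\chi_A\leq_\onebtt f$ (in particular $\chi_A\leq_\bT f$).

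For the second direction, given a c.e.\ set $W=\bigcup_s W_s$ with the standard nondecreasing computable enumeration, I would simply take $f=\chi_{\overline{W}}=1-\chi_W$ and approximate it from above by
\[ g(x,s)=\begin{cases} 1, & x\notin W_s,\\ 0, & x\in W_s.\end{cases} \]
Since $W_s\subseteq W_{s+1}$, the approximation is nonincreasing, and $\lim_s g(x,s)=\chi_{\overline{W}}(x)$. The equivalence $f\equiv_\bT \chi_W$ is immediate since $f(x)=1-\chi_W(x)$ requires exactly one oracle query in either direction.

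No serious obstacles are anticipated. The only mildly subtle point is making sure the use in the first direction is genuinely computably bounded, which works because approximability from above supplies the computable upper bound $g(x,0)$ on $f(x)$ for free; this is the same built-in feature that Definition \ref{defn:dual} singles out as distinguishing approximability from above from (general) approximability from below. Everything else is bookkeeping.
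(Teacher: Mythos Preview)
Your proof is correct, and the second direction is essentially identical to the paper's (both use the characteristic function of the complement of the c.e.\ set). The first direction, however, uses a genuinely different coding. The paper defines
\[
V_f=\Set{\la x,n\ra}{\exists^{>n}s~[g(x,s+1)\neq g(x,s)]},
\]
the set recording how many mind changes the approximation makes, and recovers $f(x)$ by first reading off the exact number of mind changes from $V_f$ and then running $g$ until that many changes have occurred. Your set $A=\Set{\la x,n\ra}{f(x)<n}$ instead records the \emph{level sets} of $f$ directly, and you recover $f(x)$ by locating the cutoff among $\la x,0\ra,\ldots,\la x,g(x,0)+1\ra$.

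Your encoding actually buys something the paper's does not: both of your reductions are in fact truth-table reductions (the questions are determined in advance and the output is a total computable function of the oracle answers), so you get $f\equiv_\tt\chi_A$, not merely $f\equiv_\bT\chi_A$. The paper explicitly notes that neither of its reductions between $f$ and $V_f$ is a \tt-reduction, and that $\chi_{V_f}$ is instead \tt-equivalent to the limit of the countdown function for $g$. So your choice of c.e.\ set is more tightly tied to $f$ itself, while the paper's $V_f$ is tied to the countdown, a theme they develop elsewhere. One small cosmetic point: the ``use'' in your reduction of $f$ to $A$ is not literally the single number $\la x,g(x,0)+1\ra$ but the maximum of the finitely many codes $\la x,n\ra$ for $n\leq g(x,0)+1$; this is of course still computable, so the \bT-reduction stands.
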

\begin{pf}
Let $f=\lim_s g(x,s)$ be an approximation of $f$ from above.
Define the c.e.\ set
$$ V_f = \Set{\la x,n\ra}{\exists^{>n}s~[g(x,s+1)\neq g(x,s)]}.$$
That is, $\la x,n\ra\in V_f$ if and only if the approximation to $f$
changes its mind more than $n$ times.  Clearly $V_f\leq_T f$,
and the computation deciding whether $\la x,n\ra\in V_f$
requires us only to ask the oracle for the value of $f(x)$.
Conversely, to compute $f(x)$ from a $V_f$-oracle,
we first compute $g(x,0)$, and then ask the oracle which of the values
$\la x,0\ra,\ldots,\la x,g(x,0)-1\ra$ lies in $V_f$;
the collective answer tells us exactly how many times the
approximation will change its mind, and with this information
we simply compute $g(x,s)$ until we have seen that many mind changes.
Both of these are bounded Turing reductions.  (Neither, however,
is a truth-table reduction.  In fact, the characteristic function of $V_f$
is \tt-equivalent to the limit of the countdown function for $f$,
which is not in general \tt-equivalent to $f$.)

For the second statement, note that every c.e.\ set is \bT-equivalent
(indeed \tt-equivalence with norm $1$)
to the characteristic function of its complement.
\qed\end{pf}

\begin{cor}
There is a $2$-approximable function that is not
Turing equivalent to any function approximable from above.
\end{cor}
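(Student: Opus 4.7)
The plan is to combine Proposition \ref{prop:cedegree} with the classical existence of a d.c.e.\ set whose Turing degree is not c.e. By Proposition \ref{prop:cedegree}, every function approximable from above is Turing equivalent to (the characteristic function of) a c.e.\ set, and conversely every c.e.\ set is Turing equivalent to a function approximable from above. Hence the Turing degrees that contain a function approximable from above are exactly the c.e.\ degrees, and the corollary reduces to exhibiting a $2$-approximable function whose Turing degree is not c.e.

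First I would verify that the characteristic function of any d.c.e.\ (i.e.\ $2$-c.e.) set is $2$-approximable. Given $A=B\setminus C$ with $B,C$ c.e.\ via standard enumerations $B_s,C_s$, define $g(x,s)=1$ if $x\in B_s\setminus C_s$ and $g(x,s)=0$ otherwise. Then $g(x,\cdot)$ starts at $0$, can flip to $1$ at most once (when $x$ first enters $B$ while not yet in $C$), and can then flip back to $0$ at most once more (when $x$ subsequently enters $C$), giving at most two mind changes per input.

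Next I would invoke the classical theorem of Cooper that there exists a properly d.c.e.\ Turing degree --- one that contains a d.c.e.\ set but no c.e.\ set. The characteristic function of any such d.c.e.\ set is $2$-approximable yet, having non-c.e.\ Turing degree, is not Turing equivalent to any c.e.\ set and therefore, by Proposition \ref{prop:cedegree}, not Turing equivalent to any function approximable from above.

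The main obstacle, were one to pursue a self-contained proof, would be the construction of a properly d.c.e.\ degree via a finite-injury priority argument: one would build a d.c.e.\ set $A$ satisfying, for every c.e.\ set $W_e$ and every pair of Turing functionals $(\Phi_i,\Psi_j)$, the requirement that $\Phi_i^A\neq W_e$ or $\Psi_j^{W_e}\neq A$. Since this is a well-known classical result, I would prefer simply to cite it.
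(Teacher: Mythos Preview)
Your proposal is correct and follows essentially the same route as the paper: take a d.c.e.\ set of properly d.c.e.\ degree, observe that its characteristic function is $2$-approximable, and apply Proposition~\ref{prop:cedegree}. The only cosmetic difference is attribution---you cite Cooper, while the paper cites Epstein--Haas--Kramer \cite{EHK} for the existence of a d.c.e.\ set not of c.e.\ degree---and you spell out the $2$-approximability of $\chi_A$ and the characterization of degrees of functions approximable from above a bit more explicitly than the paper does.
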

\begin{pf}
Let $S$ be a d.c.e.\ set which is not of c.e.\ degree.
(See, e.g., \cite{EHK} for such a construction).  Then the characteristic function $\chi_S$
is $2$-approximable, and Proposition \ref{prop:cedegree}
completes the result.
\qed\end{pf}

\section{The Distance Function in Computable Graphs}
\label{sec:graphs}

With the results of the preceding sections completed, we may now address
the intended topic of this paper:  the distance function on a computable graph.
Computable graphs are defined by the standard computable-model-theoretic definition.
\begin{defn}
\label{defn:computablegraph}
A structure $\M$ in a finite signature is \emph{computable}
if it has an initial segment of $\omega$ as its domain
and all functions and relations on $\M$ are computable
when viewed as functions and relations (of appropriate
arities) on that domain.

Therefore, a symmetric irreflexive graph $G$ is \emph{computable}
if its domain is either $\omega$ or a finite initial segment thereof,
and if there is an algorithm which decides,
for arbitrary $x,y\in G$, whether $G$ contains an edge
between $x$ and $y$ or not (i.e.\ if the algorithm computes the
entries of the adjacency matrix).
\end{defn}

The \emph{distance function} $d$ on a graph $G$
maps each pair $(x,y)\in G^2$ to the length
of the shortest path from $x$ to $y$.
Assuming $G$ is connected, such a path must exist.
By definition $d(x,x)=0$, and this is quickly
seen to be a metric on $G$.  Moreover, if
$G$ is a computable graph, then the distance
function on $G$ is approximable from above, since
for any $x$ and $y$, we can simply search
for the shortest path from $x$ to $y$.
Formally, letting $G_s$ be the induced subgraph of $G$
on the vertices $\{ 0,\ldots,s\}$, we find
the least $t$ for which $G_t$ contains $x$, $y$,
and a path between them, and let $g(x,y,0)$
be the length of the shortest such path in $G_t$.
Then we define $g(x,y,s+1)$ to be the minimum of $g(x,y,s)$
and the length of the shortest path (if any) between
$x$ and $y$ in $G_s$.  This sequence is decreasing in $s$,
with limit $d(x,y)$.  In the language of computable
model theory, we say that for every connected computable graph $G$,
the distance function is \emph{intrinsically approximable from above},
since it is approximable from above in every computable graph
isomorphic to $G$.  (See Definition \ref{defn:intrinsic} below.)

We will now describe a graph construction that enables us to encode
arbitrary functions approximable from above into the distance function
on a computable graph.  The definition is best understood by looking
at the subsequent diagram.

\begin{defn}
\label{defn:spokes}
Let $\sigma\in\omega^{<\omega}$ be any strictly decreasing nonempty finite string.
A \emph{spoke} of type $\sigma$ in a graph consists of the following.
\begin{itemize}
\item
A \emph{center node} $u$, 
which will be part of every spoke; and
\item
two more nodes $a$ (adjacent to $u$) and $b$ (not adjacent to $u$, nor to $a$),
which belong to this particular spoke; and
\item
for each $n<|\sigma|$, a path from $a$ to $b$ consisting of $(1+\sigma(n))$
more nodes (hence of length $2+\sigma(n)$); and
\item
two more paths from $a$ to $b$, of length $2+\sigma(0)$,
in addition to the one already built for $n=0$ in the preceding instruction.
\end{itemize}
These paths do not intersect each other, except at $a$ and $b$, and if $x$ and $y$
are nodes from two distinct paths, then $x$ and $y$ are not adjacent to each other,
nor to $u$.

For any function $\Gamma$ mapping each $m\in\omega$
to a finite decreasing nonempty string
$\sigma_m\in\omega^{<\omega}$, the \emph{standard graph of type $\Gamma$}
consists of a center node $u$ and, for each $m$,
a spoke of type $\sigma_m$.  (If $\Gamma$ is not injective, then there are just as many
spokes of type $\sigma$ as there are elements in $\Gamma^{-1}(\sigma)$.)
\end{defn}
Here is a picture of three spokes of such a graph, with $\sigma=\la 2\ra$,
$\sigma'=\la 2,1\ra$, and $\sigma''=\la 2,1,0\ra$.

\setlength{\unitlength}{0.62in}
\begin{picture}(7.5,5)(-0.1,0)

\put(4,5){\circle*{0.125}}
\put(4.1,4.9){$u$}

\put(4,3.5){\circle*{0.125}}
\put(4.1,3.5){$a$}
\put(4,2.7){\circle*{0.125}}
\put(4,2.0){\circle*{0.125}}
\put(4,1.3){\circle*{0.125}}
\put(4,0.5){\circle*{0.125}}
\put(4.1,0.5){$b$}
\put(4,5){\line(0,-1){1.5}}
\put(4,3.5){\line(0,-1){3}}

\put(4,3.5){\line(-2,-3){0.5}}
\put(3.5,2.7){\circle*{0.125}}
\put(3.5,2.0){\circle*{0.125}}
\put(3.5,1.3){\circle*{0.125}}
\put(3.5,2.7){\line(0,-1){1.4}}
\put(3.5,1.3){\line(2,-3){0.5}}

\put(4,3.5){\line(-1,-3){0.25}}
\put(3.75,2.7){\circle*{0.125}}
\put(3.75,2.0){\circle*{0.125}}
\put(3.75,1.3){\circle*{0.125}}
\put(3.75,2.7){\line(0,-1){1.4}}
\put(3.75,1.3){\line(1,-3){0.25}}

\put(4,5){\line(-4,-3){2}}
\put(2,3.5){\circle*{0.125}}
\put(1.7,3.5){$a'$}
\put(2,2.7){\circle*{0.125}}
\put(2,2.0){\circle*{0.125}}
\put(2,1.3){\circle*{0.125}}
\put(2,0.5){\circle*{0.125}}
\put(1.7,0.4){$b'$}
\put(2,3.5){\line(0,-1){3.0}}

\put(2,3.5){\line(2,-3){0.5}}
\put(2.5,2.7){\circle*{0.125}}
\put(2.5,2.0){\circle*{0.125}}
\put(2.5,1.3){\circle*{0.125}}
\put(2.5,2.7){\line(0,-1){1.4}}
\put(2.5,1.3){\line(-2,-3){0.5}}

\put(2,3.5){\line(1,-3){0.25}}
\put(2.25,2.7){\circle*{0.125}}
\put(2.25,2.0){\circle*{0.125}}
\put(2.25,1.3){\circle*{0.125}}
\put(2.25,2.7){\line(0,-1){1.4}}
\put(2.25,1.3){\line(-1,-3){0.25}}

\put(1,2.5){\circle*{0.125}}
\put(1,1.5){\circle*{0.125}}
\put(2,3.5){\line(-1,-1){1}}
\put(1,2.5){\line(0,-1){1.0}}
\put(1,1.5){\line(1,-1){1}}

\put(4,5){\line(2,-3){1}}
\put(5,3.5){\circle*{0.125}}
\put(5.2,3.5){$a''$}
\put(5,2.7){\circle*{0.125}}
\put(5,2.0){\circle*{0.125}}
\put(5,1.3){\circle*{0.125}}
\put(5,0.5){\circle*{0.125}}
\put(5.1,0.3){$b''$}
\put(5,3.5){\line(0,-1){3.0}}

\put(5,3.5){\line(-2,-3){0.5}}
\put(4.5,2.7){\circle*{0.125}}
\put(4.5,2.0){\circle*{0.125}}
\put(4.5,1.3){\circle*{0.125}}
\put(4.5,2.7){\line(0,-1){1.4}}
\put(4.5,1.3){\line(2,-3){0.5}}

\put(5,3.5){\line(-1,-3){0.25}}
\put(4.75,2.7){\circle*{0.125}}
\put(4.75,2.0){\circle*{0.125}}
\put(4.75,1.3){\circle*{0.125}}
\put(4.75,2.7){\line(0,-1){1.4}}
\put(4.75,1.3){\line(1,-3){0.25}}

\put(7,2.5){\circle*{0.125}}
\put(7,1.5){\circle*{0.125}}
\put(5,3.5){\line(2,-1){2}}
\put(7,2.5){\line(0,-1){1.0}}
\put(7,1.5){\line(-2,-1){2}}

\put(6,2.0){\circle*{0.125}}
\put(5,3.5){\line(2,-3){1}}
\put(6,2.0){\line(-2,-3){1}}

\end{picture}

If $f$ is approximated from above by a computable $g$,
and $g(n,0)=2$, for instance, then we build a spoke
between $a_n$ and $b_n$ of type $\la 2\ra$
(the type between $a$ and $b$ in the diagram above).
If, for some $s>0$, we find $g(n,s)=1$,
then we add a path of length $3$ to that spoke between $a_n$ and $b_n$
changing it to type $\la 2,1\ra$,
so that it looks like the spoke between $a'$ and $b'$ above.
If it turns out that $g(n,t)=0$ for some $t$, then we add one more
path, as in the spoke between $a''$ and $b''$,
leaving a spoke of type $\la 2,1,0\ra$.  In all cases,
the distance from the node $a_n$ to the node $b_n$ of this spoke
turns out to be $\lim_s g(n,s)$, which is to say $f(n)$.  So
an arbitrary function approximable from above can be coded into
a distance function in this way.  However,
the distance function for the entire graph needs to do more than
just to determine the length of each single spoke,
and complications will arise when we apply our strong
reducibilities on functions.

If $\Gamma$ is computable, then of course we have a computable presentation of
the standard graph of type $\Gamma$.  However, we will usually be interested in the
situation where $\Gamma=\lim_s \Gamma_s$ and $\Gamma_s$ is computable uniformly in $s$,
with each $\Gamma_s(m)$ being an initial segment of $\Gamma_{s+1}(m)$, such that
$\Gamma(m)=\cup_s \Gamma_s(m)$.  Since every string $\Gamma_s(m)$ is strictly decreasing,
$\Gamma(m)$ will be another finite decreasing string.  Assuming that the
functions $\Gamma_s$ are computable uniformly in $s$, it is clear how
to build a computable presentation of the standard graph of type $\Gamma$,
as the union of nested uniform presentations of the standard graphs of type $\Gamma_s$.

As already noted, the distance function of a computable
connected graph is always approximable from above.
Not every function approximable from above can be a
distance function, however:  for one thing, the range of the distance function $d$
of a connected graph $G$ is always an initial segment of $\omega$, whereas
plenty of functions approximable from above do not have such ranges.
Indeed, for each $x\in G$, the set $\Set{d(x,y)}{y\in G}$
must be an initial segment of $\omega$.
Moreover, symmetry and a triangle inequality must hold of every
distance function.  Nevertheless, we do have the following proposition,
as well as the stronger version given in the subsequent theorem,
which is the result we prove.

\begin{prop}
\label{prop:singledegree}
Every function which is approximable from above is \btt-equivalent,
with norm $2$, to the distance function for some computable graph.
\end{prop}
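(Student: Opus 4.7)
The plan is to apply the standard graph construction of Definition~\ref{defn:spokes} to a computable nonincreasing approximation $g(n,s)$ of $f$ from above. We build the graph stage by stage: whenever $g(n,0)$ first becomes available we instantiate spoke $n$ with three parallel $a_n$--$b_n$ paths of length $2+g(n,0)$, and whenever $g(n,s+1)<g(n,s)$ we append one further $a_n$--$b_n$ path of length $2+g(n,s+1)$ to spoke $n$. Since $g(n,\cdot)$ can drop at most $g(n,0)$ times, each spoke is a finite theta-graph in the limit, and the adjacency relation is uniformly computable in $s$, so the resulting graph $G$ is computable. By construction, the shortest $a_n$--$b_n$ path in the limit graph has length $2+f(n)$, so $d(a_n,b_n)=2+f(n)$.

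The reduction $f\leq_\btt d$ has norm one: on input $n$, compute the canonical indices of $a_n,b_n$, query $d(a_n,b_n)$, and output this value minus $2$.

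For $d\leq_\btt f$ with norm two, the key observation is that every node $v\neq u$ belongs to exactly one spoke, the index of which is computable from $v$ via the graph. Given an input pair $(x,y)$, we designate at most two queries, $f(n)$ and $f(m)$, at the spoke indices of $x$ and $y$ (omitting a query when a coordinate equals $u$). If $x,y$ lie in the same spoke $n$, with computable positions $i,j$ on paths $P,Q$ of computable lengths $L_P,L_Q$, then a case analysis gives $d(x,y)$ as the minimum of
\[
i+j,\quad (L_P-i)+(L_Q-j),\quad i+(2+f(n))+(L_Q-j),\quad (L_P-i)+(2+f(n))+j,
\]
together with $|i-j|$ in the case $P=Q$. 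If $x,y$ lie in distinct spokes, then every walk between them must pass through $u$, so $d(x,y)=d(x,u)+d(y,u)$, and each summand reduces to the within-spoke case using only its own value of $f$. Corner cases in which some coordinate equals $u$, $a_n$, or $b_n$, or lies at an endpoint of a path, collapse to subcases of the same formula. Thus $d(x,y)$ is a truth-table function of the graph and at most two values of $f$.

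The main technical obstacle is justifying the within-spoke distance formula. Every walk between $x$ and $y$ inside spoke $n$ decomposes into arcs lying on individual $a_n$--$b_n$ subpaths, separated by visits to the endpoints $a_n$ or $b_n$; each complete endpoint-to-endpoint crossing contributes at least $2+f(n)$ to the length, so any walk using two or more crossings is strictly worse than one of the single-crossing candidates above, and the remaining options capture the zero-crossing cases (direct, via $a_n$, or via $b_n$). Verifying this decomposition and checking each corner case is the bulk of the work; the computability of the graph and of the auxiliary maps sending a node to its spoke, path, and position follows immediately from the stage-by-stage construction.
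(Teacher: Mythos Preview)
Your proposal is correct and follows essentially the same route as the paper.  The paper subsumes Proposition~\ref{prop:singledegree} into the proof of the stronger Theorem~\ref{thm:singledegree}, adding an extra identifying loop at each $a_n$ so that the graph becomes computably categorical and the \emph{spectrum} collapses to a single degree; for the bare proposition your loop-free standard graph is enough, and your three-case analysis (same path, same spoke/different paths, distinct spokes via $u$) is exactly the content of the paper's Lemma~\ref{lemma:standarddistance}, with the same four candidate lengths in the middle case and the same $d(x,u)+d(u,y)$ decomposition in the last.
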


\begin{thm}
\label{thm:singledegree}
For every function $f$ which is approximable from above,
there is a  computable graph $G$ such that
the Turing degree spectrum of its distance function
$d$ is $\{deg(f)\}$.  Indeed, for every computable $H$ isomorphic to $G$,
the distance function $d_H$ on $H$ satisfies
$$d_H\equiv_1 d \leq_{\twobtt} f \leq_{\onebtt} d_H.$$
\end{thm}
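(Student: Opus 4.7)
\emph{Proof plan.} The strategy is to realize $f$ as the distance function of a carefully built computable graph $G$ via the spoke construction of Definition~\ref{defn:spokes}. Fix a computable approximation $g(n,s)$ to $f$ from above. I would build $G$ uniformly in stages: at stage $0$, create a central vertex $u$ and for each $n$ begin a spoke $S_n$ of type $\langle g(n,0)\rangle$ with designated endpoints $a_n$ (adjacent to $u$) and $b_n$; whenever $g(n,s+1)<g(n,s)$ I extend $S_n$ by adjoining a new path from $a_n$ to $b_n$ of length $2+g(n,s+1)$. The limiting type of $S_n$ is the strictly decreasing list of distinct values of $g(n,\cdot)$, so $d_G(a_n,b_n)=2+f(n)$.

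To make vertex roles recognizable in \emph{any} computable copy $H\cong G$, I would attach permanent ``name tags'' to each $a_n$ and $b_n$: for concreteness, pendant paths of distinct lengths (say $3n+7$ attached to $a_n$ and $3n+8$ attached to $b_n$), each terminating in a degree-one leaf. These pendants create no new shortcuts, so all spoke distances are preserved; yet every leaf of $H$ sits at the far end of a unique tag whose length determines $n$ and whether the tag marks an $a$- or $b$-endpoint. A walk from any leaf through degree-$2$ vertices until reaching a vertex of confirmed degree $\geq 3$ locates the corresponding tagged endpoint computably.

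Given $G$ so constructed, each link of the chain becomes tractable. For $f\leq_{\onebtt} d_H$: locate the tags of lengths $3n+7$ and $3n+8$ to find the images $\tilde a_n,\tilde b_n\in H$, and a single query returns $f(n)=d_H(\tilde a_n,\tilde b_n)-2$. For $d\leq_{\twobtt} f$: given $(x,y)\in G$, the construction tells us which spokes contain $x$ and $y$; if both lie in $S_n$, then $d_G(x,y)$ is a fixed computable function of $f(n)$ (one query); if $x\in S_n$ and $y\in S_m$ with $n\neq m$, every $x$--$y$ path passes through $u$, so $d_G(x,y)=d_G(x,u)+d_G(u,y)$, with each summand computable from $f(n)$ or $f(m)$ respectively (two queries). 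For $d_H\equiv_1 d$: the tags let us compute the role---spoke index, which path, and position along the path---of every tag-reachable vertex in either copy, yielding an injective computable pair-matching assigning each ordered pair in $H$ to a canonical ordered pair in $G$ of the same joint type (and vice versa); since $H\cong G$, matched pairs are equidistant.

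The main obstacle will be the $d_H\equiv_1 d$ step. Most vertices have computable roles via the tags, but the center $u$ has infinite degree, whose verification is merely $\Sigma^0_2$; for pairs involving $u$, the match in $G$ must be specified without explicitly identifying $u$ in $H$, which can be done by going through the tag-identified neighbor $\tilde a_n$ and using $d_H(u,v)=1+d_H(\tilde a_n,v)$ whenever $v$ lies in the image of $S_n$. A secondary requirement is securing \emph{injectivity} of the $1$-reductions, arranged by indexing canonical pair-representatives of each joint type in a fixed computable order; each type occurs infinitely often across the infinitely many spokes, so there is no shortage of representatives. Once the full chain is established, the spectrum equaling $\{\deg(f)\}$ follows at once, as $f\equiv_T d_H$ for every computable $H\cong G$.
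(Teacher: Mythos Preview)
Your overall plan matches the paper's: build the standard spoke graph from the approximation $g$, then decorate each spoke so that its index $n$ can be read off in any computable copy. The paper attaches a loop of length $n+3$ through $a_n$ (and relies on the three equal-length paths already present to pick out $b_n$), whereas you hang pendant paths on both $a_n$ and $b_n$; either device works. Your arguments for $f\leq_{\onebtt} d_H$ and $d\leq_{\twobtt} f$ are the same as the paper's (the latter is the content of Lemma~\ref{lemma:standarddistance}).

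Where you diverge is in the $d_H\equiv_1 d$ step, and here you are making it harder than it needs to be. The point of the tags is that they give $G$ a $\Sigma^0_1$ Scott family, so $G$ is \emph{relatively computably categorical}: for any computable $H\cong G$ there is a computable isomorphism $h:G\to H$. Once you have $h$, the map $h_2(x,y)=(h(x),h(y))$ is a $1$-reduction witnessing $d=d_H\circ h_2$, and $(h^{-1})_2$ witnesses the other direction---injectivity is automatic, and there is no need for the ``infinitely many representatives of each joint type'' device (which in fact is false here: your tags make the spokes pairwise non-isomorphic, so joint types are essentially unique). Your worry about locating $u_H$ is also misplaced: having found $\tilde a_0$ and $\tilde a_1$ via their tags, $u_H$ is their unique common neighbor, a finite decidable check. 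So the categoricity route replaces your entire pair-matching apparatus with two lines.
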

\begin{pf}
Write $f=\lim_s g(x,s)$, where $g$ is computable and nonincreasing
in $s$.  For each $x$, let $\Gamma_s(x,s)=\la g(x,s_0),g(x,s_1),\ldots,g(x,s_k)\ra$,
where $s_0=0$ and the subsequent $s_i$ are defined so that:
$$ g(x,s_0)=g(x,s_1-1) > g(x,s_1)= g(x,s_2-1)>g(x,s_2) =\cdots > g(x,s_k)=g(x,s).$$
That is, the $s_i$ are the stages $\leq s$ at which $g(x,s)$ decreases.
The uniformly computable family $\Gamma_s$ then has limit $\Gamma$ Turing-equivalent
to $f$, since $\Gamma(x)$ is a decreasing string whose final value is $f(x)$.
Therefore, there is a computable presentation $\Gtilde$ of the standard
graph of type $\Gamma$.  Our computable graph $G$ is simply
$\Gtilde$ with (for each $n$) an additional $(n+2)$ nodes
which, along with $a_n$, form a loop of length $(n+3)$ containing $a_n$.
This allows us to determine, for arbitrary nodes $a$ adjacent
to the top center node $u_H$ in an arbitrary computable
graph $H$ isomorphic to $G$, the value $n$ for which $a=a_n$.
(The set $\Set{a_n}{n\in\omega}$
is defined by adjacency to the single node $u_H$.
Then, for each $a_n$, the three paths
of equal length between $a_n$ and $b_n$ allow us to identify $b_n$,
so that we will not confuse the loop of length $n+3$
containing $a_n$ with any loop which contains both $a_n$ and $b_n$.
Having picked out $b_n$, we then find the unique loop which
contains $a_n$ but neither $b_n$ nor $u_H$, and the length of
this loop determines the index $n$ for us.)
Thus, with these new loops added, the entire graph $G$ is relatively
computably categorical, having a $\Sigma^0_1$ Scott family
defined using the loops.  So every computable copy $H$ of $G$
is computably isomorphic to $G$.

It follows that the distance function $d_H$ on the arbitrary computable
copy $H$ is $1$-equivalent to the distance function $d$ on $G$.
Indeed, if $h:G\to H$ is a computable isomorphism,
write $h_2:G^2\to H^2$ by $h_2(x,y)=(h(x),h(y))$.
Then $h_s$ is computable and $d_H\circ h_2=d$;
likewise $d_H=d\circ (h^{-1})_2$.

But $f$ allows us to decide the shortest path
from $a_n$ to $b_n$, for every $n$, since $(2+f(n))$ is the length
of that path.  It follows that the distance function $d$ on all of $G$
is computable in $f$.  (This is made explicit in Lemma
\ref{lemma:standarddistance}, which goes through for this
graph with the extra loops, as well as for standard
presentations.)  Conversely, the distance function $d$
allows us to compute $(2+f(n))$ for every $n$,
just by finding $d(a_n,b_n)$.  So this distance
function is Turing-equivalent to $f$, and by the $1$-equivalence
above, the spectrum of the distance function is simply $\{\deg(f)\}$.
More specifically, we have $f\leq_\onebtt d$
(since $f(n)=2+d(a_n,b_n)$) and $d\leq_\twobtt f$,
using the proof of Lemma \ref{lemma:standarddistance}.
\qed\end{pf}

\begin{cor}
\label{cor:strictomegagraph}
There exists a computable graph $G$ such that, for every $n$
and for every computable graph $H\cong G$,
the distance function on $H$ is not $n$-approximable from above.
(So the distance function on this $G$ intrinsically fails to be $n$-AFA.)
\end{cor}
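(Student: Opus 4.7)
The plan is to combine Corollary \ref{cor:strictomega} with Theorem \ref{thm:singledegree}. First I would invoke Corollary \ref{cor:strictomega} to obtain a function $f$ which is $\omega$-approximable from above but, for every $n\in\omega$, not $n$-approximable. Feeding this $f$ into Theorem \ref{thm:singledegree} yields a computable graph $G$ with the property that, for every computable $H\cong G$, the distance function $d_H$ satisfies $d_H\equiv_1 d$ and $f\leq_\onebtt d_H$; the $1$-$\btt$ reduction is given explicitly by $f(m)=d_H(h(a_m),h(b_m))-2$, where $h\colon G\to H$ is a computable isomorphism and $a_m,b_m$ are the canonical endpoints of the $m$-th spoke as used in the proof of Theorem \ref{thm:singledegree}.

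The second step is to derive a contradiction from the assumption that, for some $n$, some computable $H\cong G$ has a distance function $d_H$ that is $n$-approximable from above. Given a total computable approximation $g_H(x,y,s)$ of $d_H$ with at most $n$ mind changes on each input pair, I would set $g(m,s)=g_H(h(a_m),h(b_m),s)-2$. This is total and computable, with limit $f(m)$, and its mind-change count at $m$ is bounded above by the mind-change count of $g_H$ at $(h(a_m),h(b_m))$, hence by $n$. Thus $f$ would be $n$-approximable, contradicting Corollary \ref{cor:strictomega}.

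The only real obstacle is the observation, in the last step, that the $1$-$\btt$ reduction preserves the mind-change bound. This holds because a $1$-$\btt$ reduction simply post-composes a fixed computable function (here, subtraction by $2$) to a single oracle value, so changes in the computed output correspond one-to-one with changes in that single oracle value; no multiplicative blow-up occurs, in contrast with what one would expect from a reduction of larger norm. I note also that it suffices to contradict $n$-approximability in general (not merely $n$-approximability from above), so the specific shape of the derived approximation $g$ is immaterial beyond its mind-change count.
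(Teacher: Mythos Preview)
Your proposal is correct and follows essentially the same route as the paper: invoke Corollary~\ref{cor:strictomega} to get $f$, apply Theorem~\ref{thm:singledegree}, and then observe that the specific reduction $f(m)=d_H(h(a_m),h(b_m))-2$ pulls back an $n$-approximation of $d_H$ to an $n$-approximation of $f$. The paper phrases this last step more tersely (noting that a function reducible in this way to something $n$-approximable from above is itself $n$-approximable from above), whereas you spell out explicitly why the $\onebtt$ reduction preserves the mind-change bound and correctly observe that only plain $n$-approximability is needed for the contradiction; but the substance is the same.
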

\begin{pf}
By Corollary \ref{cor:strictomega}, there is a function $f$
which is $\omega$-approximable from above, but not
$n$-approximable for any $n$.  Apply
Theorem \ref{thm:singledegree} to this $f$,
and note that if $f$ were $1$-reducible to a function
$n$-approximable from above, then $f$ itself
would be $n$-approximable from above.
\qed\end{pf}

To complete the proof of Theorem \ref{thm:singledegree},
we need the following lemma.

\begin{lemma}
\label{lemma:standarddistance}
Let $G$ be a computable copy of the standard graph
of some type $\Gamma$, and let $u$ be the center of $G$.
Then there is are computable
functions $p_a$ and $p_b$ which map every node $x\in G-\{ u\}$
to the unique node $p_a(x)$ adjacent to $u$ such that
$x$ and $p_a(x)$ are on the same spoke of $G$,
and to the other end point $p_b(x)$ of that spoke.
Moreover, if we define
$$ S=\Set{\la a,p_b(a)\ra\in G^2}{a\text{~is adjacent to~}u},$$
then the distance function $d$ for $G$ satisfies
$$d \leq_\twobtt (d\res S) \leq_\onebtt d.$$
\end{lemma}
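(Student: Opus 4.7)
My plan is to first establish that $p_a$ and $p_b$ can be computed from the local structure of a spoke, and then to use them as ``addresses'' in terms of which the global distance on $G$ decomposes into at most two spoke-level distances. For $p_a$: since distinct spokes meet only at $u$ and each spoke is a finite subgraph, $p_a(x)$ is the unique neighbor of $u$ in the connected component of $x$ in $G \setminus \{u\}$; I compute it by enumerating finite induced subgraphs $G_s = G\res\{0,\ldots,s\}$, computing the component of $x$ in $G_s \setminus \{u\}$, and halting at the first stage at which that component meets the neighborhood of $u$. For $p_b$: given $a = p_a(x)$, within the spoke the only vertices of degree at least $3$ are $a$ and $b$ themselves, since every other node lies interior to one of the $|\sigma|+2$ paths and therefore has degree exactly $2$; and $a$ is distinguished from $b$ by being adjacent to $u$. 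Hence $p_b(x)$ is characterized as the unique $v$ such that $v$ is not adjacent to $u$, $v$ has at least three neighbors, and $v$ lies in the same $G\setminus\{u\}$-component as $a$. All three conditions are c.e., and exactly one vertex satisfies all of them, so dovetailing computes $p_b(x)$ from $x$.

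The reduction $d\res S \leq_\onebtt d$ is immediate: $S$ is computable via $p_a$ and $p_b$, and on input $(x,y)\in S$ one simply queries $d(x,y)$ once (and off $S$ outputs any fixed value).

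For $d\leq_\twobtt(d\res S)$ I give a truth-table procedure: on input $(x,y)$, compute $a_x = p_a(x)$, $b_x = p_b(x)$, and similarly for $y$ whenever defined, and branch on cases. If $x=u=y$, output $0$ with no queries. If $x=u$ and $y\neq u$ (symmetrically for $y=u$), read from the spoke structure of $G$ the length $\ell$ of the path containing $y$ and the distance $\alpha$ from $y$ to $a_y$ along it, query $d(a_y,b_y)$, and output
\[ d(u,y) = 1 + \min\bigl(\alpha,\,(\ell-\alpha)+d(a_y,b_y)\bigr). \]
If $x,y\neq u$ lie on the same spoke (i.e.\ $a_x=a_y$), compute the entire finite spoke by tracing its paths from $a_x$, and read off $d(x,y)$ by breadth-first search in the spoke, using zero queries. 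If $x,y\neq u$ lie on different spokes, every path from $x$ to $y$ passes through $u$, so $d(x,y)=d(x,u)+d(u,y)$, computed by the previous formula for each endpoint and costing the two queries $d(a_x,b_x)$ and $d(a_y,b_y)$. In all cases at most two queries to $d\res S$ are made, and the case split is computable in $(x,y)$, giving the desired $\twobtt$-reduction.

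The main obstacle is verifying that these spoke-local formulas really compute global distances. Two observations suffice. First, any path from $x$ that leaves its spoke must pass through $u$ and re-enter, adding at least two edges compared to the best spoke-internal route; so global and spoke-internal distances coincide in the same-spoke case. Second, within any spoke the shortest path from an interior vertex $y$ to its endpoint $a$ either proceeds directly along $y$'s own path (contributing $\alpha$) or leaves at $b$ and returns via some other path (contributing at least $(\ell-\alpha)+d(a,b)$), which is exactly what the $\min$ captures. Once these observations are in place, the remaining computations of $\ell$ and $\alpha$ from the graph and the case bookkeeping are routine.
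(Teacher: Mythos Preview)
Your computation of $p_a$ and $p_b$ is fine (the paper identifies $b$ via the three equal-length paths rather than by degree, but your degree-$\geq 3$ criterion works too), and your handling of the $x=u$ case and the different-spoke case is correct and matches the paper. The genuine gap is in your same-spoke case. You claim that when $a_x=a_y$ you can ``compute the entire finite spoke by tracing its paths from $a_x$'' and then read off $d(x,y)$ by breadth-first search with zero oracle queries. But you cannot compute the entire spoke: in a computable presentation you can decide adjacency of any given pair, but you cannot compute the degree of $a_x$, so you never know when you have found all of the $a_x$--$b_x$ paths. The whole point of the lemma (and the reason $d\res S$ appears as an oracle at all) is that the type $\sigma$ of the spoke is in general \emph{not} computable; a short final entry of $\sigma$ may correspond to a path whose interior nodes carry enormous indices in the presentation, and your BFS on the currently visible portion of the spoke will miss it. Concretely, take $\sigma=\langle 5,1\rangle$, put $x$ at distance $1$ from $b_x$ on one of the length-$7$ paths and $y$ at distance $1$ from $a_x$ on another: the true distance is $1+3+1=5$ via the length-$3$ path, whereas BFS restricted to the three length-$7$ paths returns $7$.

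The paper therefore does \emph{not} get the same-spoke case for free. It splits it into ``same path'' and ``different paths within the spoke'' and in each writes $d(x,y)$ as an explicit minimum of computable along-path distances $d'(\cdot,\cdot)$ together with the single oracle value $d(a_x,b_x)$. So the same-spoke case costs one query, not zero; the overall norm-$2$ bound still arises only from the different-spoke case, exactly as you argue.
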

\begin{pf}
Given any $x\in G$ with $x\neq u$,
we search for a path in $G$ which goes from $x$
to $u$ without containing $u$ (except as its end point).
The node on this path
which is adjacent to $u$ must be the desired $p_a(x)$,
simply because of the structure of $G$.

We then compute $p_b(x)$ by finding three paths of equal length
from $p_a(x)$ to a common end point, such that none
of these paths contains $u$ or intersects another of the
three paths (except at their end points).  The common end point must then be
$p_b(x)$.  (This is the reason why a spoke of type $\sigma$
has two extra paths of length $\sigma(0)$.  Without those paths,
there could exist computable copies of $G$ in which this
function $p_b$ would not be computable.)

To compare $d$ with $d\res S$, consider any $x,y\in G-\{ u\}$.
If $x$ and $y$ lie on the same spoke (that is, if $p_a(x)=p_a(y)$),
then we can check whether $x$ and $y$
lie on the same path from $a=p_a(x)$ to $b=p_b(x)$.
This gives three cases.
\begin{enumerate}
\item
If $x$ and $y$ lie on the same path between the same
$a$ and $b$, assume
without loss of generality that on this path,
$x$ lies closer to $a$, say with $m$ nodes between them,
and $y$ lies closer to $b$, with $n$ nodes between them.
Then $d(x,y)$ is either the distance between them along
this path, or else $(m+d(a,b)+n)$, whichever is smaller.
So we need only ask the $(d\res S)$-oracle for
the value $d(a,b)$, and then compare these two possibilities.
\item
If $x$ and $y$ lie on different paths within the spoke
between the same $a$ and $b$,
write $d'(a,x)$ and $d'(x,b)$ for the distances
between those nodes along the path through $x$,
and $d'(a,y)$ and $d'(y,b)$ likewise along the path through $y$.
(It is possible that $d(a,x) < d'(a,x)$, if there is a much shorter
separate path from $a$ to $b$.  However, $d'(a,x)$ is computable.)
Then $d(x,y)$ is the least of
the lengths of the following paths from $x$ to $y$:
\begin{align*}
&[d'(x,a)+d'(a,y)]\text{~~(via a path through $a$)};\\
&[d'(x,b)+d'(b,y)]\text{~~(via a path through $b$)};\\
&[d'(x,a)+d(a,b)+d'(b,y)]\text{~~(via a path through $a$,
then $b$)};\\
&[d'(x,b)+d(b,a)+d'(a,y)]\text{~~(via a path through $b$,
then $a$)}.
\end{align*}
We can compute all of these by asking the $(d\res S)$-oracle for $d(a,b)$,
and having done so, we need only take the minimum
of these four values.
\item
If $x$ and $y$ lie on distinct spokes, find the
end points $a_x=p_a(x)$, $a_y=p_a(y)$, $b_x=p_b(x)$,
and $b_y=p_b(y)$ of those spokes.  As in Case (2), we use $d'(x,a_x)$
to denote the distance from $x$ to $a_x$ along the path
from $a_x$ to $a_y$ through $x$, which we can compute;
similarly for $d'(y,a_y)$, etc.

First we compute $d(x,u)$, which is the minimum of
\begin{align*}
d'(x,a_x)&+1,\text{~~(via a path through $a_x$), and}\\
d'(x,b_x)&+d(b_x,a_x)+1,\text{~~(via a path through $b_x$, then $a_x$)}
\end{align*}
This uses the oracle for $d\res S$.  Likewise we compute $d(y,u)$.
Then it is clear from the structure of $G$ that $d(x,y)=d(x,u)+d(u,y)$.
Notice that here in Case (3) we needed to ask two questions
of the $(d\res S)$-oracle:  the values of $d(a_x,b_x)$ and $d(a_y,b_y)$.
\end{enumerate}
Finally, Case (3) showed how to compute $d(x,u)$.
So we have computed $d(x,y)$ for all possible pairs $(x,y)\in G^2$,
proving $d\leq_T (d\res S)$, and the only questions we asked of the
oracle for $d\res S$ were the values $d(p_a(x),p_b(x))$
and $d(p_a(y),p_b(y))$.  Moreover, a close reading of the proof
shows that we could give in advance
a formula for $d(x,y)$ based on these two values, such that
the formula always converges to an answer.  (The formulas
are slightly different in Cases (1), (2), and (3), but we could
distinguish these three cases and choose the correct one for
the pair $(x,y)$ before consulting the oracle at all.)
Thus we have a \texttt{btt}-reduction of norm $2$
from the function $d$ to the function $d\res S$.
(It would be of norm $1$ if not for Case (3), which
required two questions to be asked of the oracle.)

The $1$-reduction $(d\res S) \leq_1 d$ is obvious:
one reduces using the identity function.  To obey the technicalities
of the definition, one should define $d\res S$ to be equal to $d$
on $S$, and equal to $0$ everywhere else, since under a careful
reading, no non-total function can $1$-reduce to a total function.
Fortunately, $S$ is a computable set, so the $1$-reduction can map
each pair $(x,y)\notin S^2$ to a distinct pair $(z,z)$ with $z\notin S$.
\qed\end{pf}
This allows us to prove a corollary about the
distance function on each computable copy of $G$.
\begin{cor}
\label{cor:omegaspec}
There exists a computable connected graph $G_\omega$
whose distance function is (of course) intrinsically approximable from above,
and such that for every function $f$ approximable from above,
there exists a computable graph $G_f$ isomorphic to $G$
whose distance function $d_f$ satisfies $d_f\leq_\twobtt f$
and $f\leq_\onebtt d_f$.
\end{cor}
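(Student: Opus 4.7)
The plan is to build one computable graph $G_\omega$ whose isomorphism type is flexible enough to accommodate every $f$ approximable from above, without using any of the tagging loops of Theorem~\ref{thm:singledegree}, so that different computable copies of $G_\omega$ can genuinely realize different distance functions. Concretely, let $G_\omega$ be the standard graph of a fixed computable type $\Gamma_\omega$ in which every finite nonempty strictly decreasing string $\sigma \in \omega^{<\omega}$ occurs as a spoke type infinitely often. Then $G_\omega$ is a single isomorphism class with an obvious computable presentation, and the distance function on any computable copy is intrinsically approximable from above by the usual path search.

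Given $f$ approximable from above with a nonincreasing computable approximation $g(n,s)$, I would construct a computable presentation $G_f \cong G_\omega$ consisting of a common center $u$ together with two disjoint families of spokes: \emph{canonical} spokes indexed by $n \in \omega$, and \emph{filler} spokes added on a computable schedule. The canonical spoke for $n$ is built in stages exactly as in the proof of Theorem~\ref{thm:singledegree}, following the strictly decreasing string $\Gamma^f(n)$ of distinct successive values of $g(n,\cdot)$, so that in the limit it is a spoke of type $\Gamma^f(n)$ with $d(a_n, b_n) = 2 + f(n)$. The filler spokes are scheduled so that, together with the canonical spokes, each finite nonempty decreasing string appears as a spoke type infinitely often in $G_f$; this forces $G_f \cong G_\omega$. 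Throughout, I would arrange the vertex numbering so that from a vertex number alone one can computably decide whether the vertex lies on a filler spoke (and read off its predetermined type) or on canonical spoke $n$ (and recover $n$, $a_n$, and $b_n$).

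With this setup, the two reductions follow by the arguments of Theorem~\ref{thm:singledegree} and Lemma~\ref{lemma:standarddistance}. For $f \leq_\onebtt d_f$, from input $n$ one computably locates $a_n, b_n$ and then $f(n) = d_f(a_n,b_n) - 2$ from a single oracle query. For $d_f \leq_\twobtt f$, given $x,y \in G_f$ one computes $p_a(x), p_b(x), p_a(y), p_b(y)$ and uses the numbering to decide for each of $x, y$ whether its spoke is filler or canonical. Distances inside a filler spoke are computable with no query, since the filler's type is fixed in advance, and distances inside canonical spoke $n$ use the single oracle value $d_f(a_n,b_n) = 2 + f(n)$. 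The three-case analysis of Lemma~\ref{lemma:standarddistance} then bounds the total number of $f$-queries needed for any single $d_f$-value by $2$.

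The main obstacle will be the bookkeeping for the vertex numbering: ensuring that canonical versus filler spokes are computably distinguishable, that the filler schedule realizes the full isomorphism type of $G_\omega$ irrespective of which types happen to appear among the strings $\Gamma^f(n)$, and that the canonical-spoke construction (which only commits to $\Gamma^f(n)$ progressively over stages) can be interleaved consistently with the filler construction. Once this is settled, both $G_f \cong G_\omega$ and the stated \btt-reductions are immediate from the methods already developed in this section.
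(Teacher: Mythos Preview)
Your proposal is correct and essentially the same as the paper's proof. The paper resolves your ``main obstacle'' with a one-line device: it sets $\Delta(2x)$ to be the canonical string for $f(x)$ and $\Delta(2x+1)=\Gamma(x)$, so the odd-indexed filler spokes by themselves already realize every decreasing string infinitely often (no coordination with the canonical spokes is needed), and the parity of the spoke index makes the canonical/filler distinction computable without any further vertex-numbering bookkeeping.
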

\begin{pf}
Let $\Gamma$ be a computable function which enumerates
all strictly decreasing sequences in $\omega^{<\omega}$.
Moreover, arrange $\Gamma$ so that every sequence in the range of $\Gamma$ is
equal to $\Gamma(n)$ for infinitely many $n$.
Let $G_\omega$ be a computable presentation
of the standard graph of type $\Gamma$, and $d_\omega$
its distance function.  

Now  let $f:\omega\to\omega$ be any total function
which is approximable from above.  
Write $f=\lim_s g(x,s)$, where $g$ is computable and nonincreasing
in $s$.  For each $x$, let $\Delta_s(2x)=\la g(x,s_0),g(x,s_1),\ldots,g(x,s_k)\ra$,
where $s_0=0$ and $s_{i+1}$ is the least $t\leq s$ (if any)
such that $g(x,t) < g(x,s_i)$.  When there is no such $t$, we set $k=i$,
ending the sequence.  Thus
$$ g(x,s_0)=g(x,s_1-1) > g(x,s_1)= g(x,s_2-1)>g(x,s_2) =\cdots > g(x,s_k)=g(x,s),$$
and the $s_i$ are the stages $\leq s$ at which $g(x,s)$ decreases.
Meanwhile, let $\Delta_s(2x+1)=\Gamma(x)$ for every $x$ and $s$.
The uniformly computable family $\Delta_s$ then has limit $\Delta$ with
$f\leq_\onebtt \Delta$,
since $\Delta(2x)$ is a decreasing string whose final value is $f(x)$,
while $\Delta(2x+1)=\Gamma(x)$ is computable.
Therefore, there is a computable presentation $G_f$ of the standard
graph of type $\Delta$.  We claim that $G_f\cong G_\omega$, and that
the distance function $d_f$ of $G_f$ has $d_f\leq_\twobtt F \leq_\onebtt f$
(hence $d_f\leq_\twobtt f$) and $f\leq_\onebtt d_f$.
This follows from Lemma
\ref{lemma:standarddistance}, since $d_f(a_{2x},b_{2x})=f(x)$
and $d_f(a_{2x+1},b_{2x+1})$ is computable in $x$.
The isomorphism between $G$ and $G_\omega$ is clear, since the range
of $\Delta$ and the range of $\Gamma$ each contains every strictly decreasing
string in $\omega^{<\omega}$ infinitely many times:
every such string equals both $\Gamma(n)$
and $\Delta(2n+1)$ for infinitely many $n$.  This determines
the isomorphism types of the standard graphs $G$ and $G_\omega$
of these types, and they are the same.
\qed\end{pf}

\section{$n$-Approximable Distance Functions}
\label{sec:napprox}

We now turn to graphs in which the distance function
is $n$-approximable from above, for some fixed $n$.
Our goal here is to repeat Corollary \ref{cor:omegaspec}
for the property of $n$-approximability from above,
rather than for arbitrary approximability from above.
Of course, a function which is $n$-approximable from
above must be approximable from above, and therefore
is included in statements such as Theorem \ref{thm:singledegree}.
However, we would like to make the $n$-approximability
intrinsic to the graph, in the following sense
(which is standard in computable model theory).

\begin{defn}
\label{defn:intrinsic}
The distance function on a computable graph $G$
is \emph{intrinsically $n$-approximable from above}
if, for every computable graph $H$ isomorphic to $G$,
the distance function on $H$ is $n$-approximable from above.
$G$ is \emph{relatively intrinsically $n$-approximable from above}
if for every graph $H\cong G$ with domain $\omega$,
the Turing degree of the edge relation on $H$ computes
an $n$-approximation from above to the distance function on $H$.
\end{defn}
One could give the same definition with $\omega$ in place of $n$,
but we have already remarked that the distance function of
every computable graph is approximable from above
(which is to say, $\omega$-approximable from above),
so this would be trivial.  In a graph satisfying
Definition \ref{defn:intrinsic}, there is some structural reason for which
the distance function is always $n$-approximable from above,
no matter how one presents the graph.
(Relative intrinsic $n$-approximability from above says that
this holds even when we consider noncomputable presentations.)
The construction given in Corollary \ref{cor:omegaspec} does
not have this property:  even if we have $n$-approximations to
$d(a_x,b_x)$ and $d(a_y,b_y)$ from above,
we do not get an $n$-approximation to $d(b_x,b_y)$:
this distance will equal the sum $d(b_x,a_x)+2+d(a_y,b_y)$,
and since either summand could decrease as many as $n$ times,
$d(b_x,b_y)$ could decrease as many as $2n$ times.
(This has to do with $d$ being \twobtt \ reducible to the
original function, rather than \onebtt \ reducible.)
So we must revise the format of our graphs.
The next definition does so in several ways,
mainly by adding a second center node and
elongating the paths between the center nodes
and the $a$- and $b$-nodes of each spoke.
Both of these changes will turn out to be essential,
and will result in useful theorems, but as we shall see,
the best result one would hope for remains unproven.

\begin{defn}
\label{defn:elongatedspokes}
Let $\sigma\in\omega^{<\omega}$ be any strictly decreasing nonempty finite string.
An \emph{elongated spoke} of 
type $\sigma$ in a graph consists of the following.
\begin{itemize}
\item
A chain of $(3+\sigma(0))$ nodes (hence of length $2+\sigma(0)$),
beginning with the \emph{top center node}
$u$ and ending with a node called $a$; and
\item
another chain of $(3+\sigma(0))$ nodes, beginning with the \emph{bottom center node}
$v$ (usually visualized sitting below $u$) and ending with
a node called $b$; and
\item
for each $n<|\sigma|$, a path from $a$ to $b$ consisting of $(1+\sigma(n))$ nodes
in addition to $a$ and $b$ (hence of length $2+\sigma(n)$); and
\item
two more paths from $a$ to $b$, of length $2+\sigma(0)$,
in addition to the one already built for $n=0$ in the preceding instruction.
\end{itemize}
These paths do not intersect each other, except at $a$ and $b$, and if $x$ and $y$
are nodes from two distinct paths, then $x$ and $y$ are not adjacent to each other,
nor to $u$, nor to $v$.  The center nodes $u$ and $v$ will belong
to every spoke in the graph, but all other nodes belong only to this spoke,
and will not be adjacent to any node in any other spoke.

For any function $\Gamma$ mapping each $m\in\omega$
to a finite decreasing nonempty string $\sigma_m\in\omega^{<\omega}$,
the \emph{elongated standard graph of type $\Gamma$
} consists of center nodes $u$ and $v$ and, for each $m$,
a spoke of type $\sigma_m$.  (If $\Gamma$ is not injective, then there are just as many
spokes of type $\sigma$ as there are elements in $\Gamma^{-1}(\sigma)$.)
\end{defn}


Here is a picture of three spokes of such a graph, with $\sigma_0=\la 4\ra$,
$\sigma_1=\la 3,1\ra$, and $\sigma_2=\la 2,1,0\ra$.  Notice that the elongation paths
emerging from $u$ and from $v$ have different lengths, determined
by the length of the longest path from $a_n$ to $b_n$.


\setlength{\unitlength}{0.62in}
\begin{picture}(7.5,6.6)(-0.1,-1.3)

\put(4,5){\circle*{0.125}}
\put(4,-1){\circle*{0.125}}
\put(4,5.1){$u$}
\put(4,-1.2){$v$}

\put(4,3.5){\circle*{0.125}}
\put(4.1,3.5){$a_0$}
\put(4,2.7){\circle*{0.125}}
\put(4,2.35){\circle*{0.125}}
\put(4,2.0){\circle*{0.125}}
\put(4,1.65){\circle*{0.125}}
\put(4,1.3){\circle*{0.125}}
\put(4,0.5){\circle*{0.125}}
\put(4.1,0.5){$b_0$}

\put(4,0.5){\line(0,-1){1.5}}
\put(4,0.25){\circle*{0.125}}
\put(4,0){\circle*{0.125}}
\put(4,-0.25){\circle*{0.125}}
\put(4,-0.5){\circle*{0.125}}
\put(4,-0.75){\circle*{0.125}}
\put(4,-0.4){\line(0,-1){0.6}}

\put(4,5){\line(0,-1){1.5}}
\put(4,4.75){\circle*{0.125}}
\put(4,4.5){\circle*{0.125}}
\put(4,4.25){\circle*{0.125}}
\put(4,4.0){\circle*{0.125}}
\put(4,3.75){\circle*{0.125}}
\put(4,4.1){\line(0,-1){0.6}}
\put(4,3.5){\line(0,-1){3.5}}

\put(4,3.5){\line(-2,-3){0.5}}
\put(3.5,2.7){\circle*{0.125}}
\put(3.5,2.35){\circle*{0.125}}
\put(3.5,2.0){\circle*{0.125}}
\put(3.5,1.65){\circle*{0.125}}
\put(3.5,1.3){\circle*{0.125}}
\put(3.5,2.7){\line(0,-1){1.4}}
\put(3.5,1.3){\line(2,-3){0.5}}

\put(4,3.5){\line(-1,-3){0.25}}
\put(3.75,2.7){\circle*{0.125}}
\put(3.75,2.35){\circle*{0.125}}
\put(3.75,2.0){\circle*{0.125}}
\put(3.75,1.65){\circle*{0.125}}
\put(3.75,1.3){\circle*{0.125}}
\put(3.75,2.7){\line(0,-1){1.4}}
\put(3.75,1.3){\line(1,-3){0.25}}

\put(4,5){\line(-4,-3){2}}
\put(3.6,4.7){\circle*{0.125}}
\put(3.2,4.4){\circle*{0.125}}
\put(2.8,4.1){\circle*{0.125}}
\put(2.4,3.8){\circle*{0.125}}
\put(2.8,4.1){\line(-4,-3){0.8}}
\put(2,3.5){\circle*{0.125}}
\put(1.7,3.5){$a_1$}
\put(2,2.7){\circle*{0.125}}
\put(2,2.2){\circle*{0.125}}
\put(2,1.8){\circle*{0.125}}
\put(2,1.3){\circle*{0.125}}
\put(2,0.5){\circle*{0.125}}
\put(1.7,0.5){$b_1$}
\put(2,3.5){\line(0,-1){3.0}}
\put(2,0.5){\line(4,-3){2}}
\put(2.4,0.2){\circle*{0.125}}
\put(2.8,-0.1){\circle*{0.125}}
\put(3.2,-0.4){\circle*{0.125}}
\put(3.6,-0.7){\circle*{0.125}}

\put(2,3.5){\line(2,-3){0.5}}
\put(2.5,2.7){\circle*{0.125}}
\put(2.5,2.2){\circle*{0.125}}
\put(2.5,1.8){\circle*{0.125}}
\put(2.5,1.3){\circle*{0.125}}
\put(2.5,2.7){\line(0,-1){1.4}}
\put(2.5,1.3){\line(-2,-3){0.5}}

\put(2,3.5){\line(1,-3){0.25}}
\put(2.25,2.7){\circle*{0.125}}
\put(2.25,2.2){\circle*{0.125}}
\put(2.25,1.8){\circle*{0.125}}
\put(2.25,1.3){\circle*{0.125}}
\put(2.25,2.7){\line(0,-1){1.4}}
\put(2.25,1.3){\line(-1,-3){0.25}}

\put(1,2.5){\circle*{0.125}}
\put(1,1.5){\circle*{0.125}}
\put(2,3.5){\line(-1,-1){1}}
\put(1,2.5){\line(0,-1){1.0}}
\put(1,1.5){\line(1,-1){1}}

\put(4,5){\line(2,-3){1}}
\put(4.25,4.625){\circle*{0.125}}
\put(4.5,4.25){\circle*{0.125}}
\put(4.75,3.875){\circle*{0.125}}
\put(4.6,4.1){\line(2,-3){0.4}}
\put(5,3.5){\circle*{0.125}}
\put(5.2,3.5){$a_2$}
\put(5,2.7){\circle*{0.125}}
\put(5,2.0){\circle*{0.125}}
\put(5,1.3){\circle*{0.125}}
\put(5,0.5){\circle*{0.125}}
\put(5.1,0.3){$b_2$}
\put(5,3.5){\line(0,-1){3.0}}
\put(5,0.5){\line(-2,-3){1}}

\put(4.75,0.125){\circle*{0.125}}
\put(4.5,-0.25){\circle*{0.125}}
\put(4.25,-0.625){\circle*{0.125}}

\put(5,3.5){\line(-2,-3){0.5}}
\put(4.5,2.7){\circle*{0.125}}
\put(4.5,2.0){\circle*{0.125}}
\put(4.5,1.3){\circle*{0.125}}
\put(4.5,2.7){\line(0,-1){1.4}}
\put(4.5,1.3){\line(2,-3){0.5}}

\put(5,3.5){\line(-1,-3){0.25}}
\put(4.75,2.7){\circle*{0.125}}
\put(4.75,2.0){\circle*{0.125}}
\put(4.75,1.3){\circle*{0.125}}
\put(4.75,2.7){\line(0,-1){1.4}}
\put(4.75,1.3){\line(1,-3){0.25}}

\put(7,2.5){\circle*{0.125}}
\put(7,1.5){\circle*{0.125}}
\put(5,3.5){\line(2,-1){2}}
\put(7,2.5){\line(0,-1){1.0}}
\put(7,1.5){\line(-2,-1){2}}

\put(6,2.0){\circle*{0.125}}
\put(5,3.5){\line(2,-3){1}}
\put(6,2.0){\line(-2,-3){1}}

\end{picture}

If $\Gamma$ is computable, then of course we have a computable presentation of
the elongated standard graph of type $\Gamma$.  
As before, though, we will usually be interested in the situation where
$\Gamma=\lim_s \Gamma_s$ and $\Gamma_s$ is computable uniformly in $s$.

\begin{lemma}
\label{lemma:elongatedstandarddistance}
Let $G$ be a computable copy of the elongated standard graph
of some type $\Gamma$, 
with distance function $d$, and let $u$ and $v$
be the centers of $G$.  Then there are computable
functions $p_a$, $p_b$ with domain $G-\{ u,v\}$
which output 
the unique nodes $p_a(x)$ and $p_b(x)$
of valence $>2$ such that $x$, $p_a(x)$,
and $p_b(x)$ are all on the same spoke of $G$
with $d(p_a(x),u) < d(p_b(x),u)$.  We can also compute
whether $x$ lies between $p_a(x)$ and $p_b(x)$, or above
$p_a(x)$, or below $p_b(x)$.

Moreover, the distance function $d$
satisfies $d\leq_\twobtt (d\res S)$ and $(d\res S)\leq_1 d$,
where $S$ is the following set:
$$ S=\Set{\la p_a(x),p_b(x)\ra\in G^2}{x\notin\{ u,v\}}.$$
\end{lemma}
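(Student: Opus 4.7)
The plan is to follow the template of Lemma~\ref{lemma:standarddistance}, adapting the argument to accommodate the two center nodes $u,v$ and the elongation chains. To compute $p_a(x)$ and $p_b(x)$ for $x\in G-\{u,v\}$, I would search within the connected component of $x$ in $G-\{u,v\}$ for a pair $(y,y')$ admitting three internally disjoint paths between them; by construction this pair must be $\{a_m,b_m\}$ for the spoke containing $x$. To distinguish the two, I use the structural asymmetry of the elongation: from $a_m$ one can reach $u$ via its chain without revisiting $b_m$, whereas any $b_m$-to-$u$ path must pass through $a_m$. Hence $p_a(x)$ is characterized as the branching node from which $u$ is reachable avoiding the other branching node, and this characterization is effective by a straightforward search. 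To decide whether $x$ is above $p_a(x)$, between, or below $p_b(x)$, I examine the connected component of $x$ in $G-\{p_a(x),p_b(x)\}$: it either contains $u$ (above), or contains $v$ (below), or contains neither (between).

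For the reduction $d\leq_{\twobtt}(d\res S)$, given input $(x,y)$ I would first compute $p_a,p_b$ and the trichotomies for both, and then branch on whether $x$ and $y$ share a spoke. If they do, let $a=p_a(x)=p_a(y)$ and $b=p_b(x)=p_b(y)$; a single oracle query for $d(a,b)$ suffices. For each combination of trichotomy values, the candidate shortest paths form a short, computably listed collection---direct chain traversal, routes through $a$ alone, through $b$ alone, through $a$ then $b$, and through $b$ then $a$---with each length a sum of a computable quantity and at most one copy of $d(a,b)$, so one simply returns the minimum. This generalizes the three cases of Lemma~\ref{lemma:standarddistance}, adding sub-cases for nodes above $a$ or below $b$ on the elongation chains.

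If instead $x$ and $y$ lie on different spokes, let $a,b$ and $a',b'$ be the respective branching nodes, and query the oracle for $d(a,b)$ and $d(a',b')$. As in the shared-spoke analysis, these two queries let us compute $d(x,u),d(x,v),d(y,u),d(y,v)$ (each being the minimum of a direct chain-plus-path distance and one that routes through the opposite branching node, using the known value of $d(a,b)$ or $d(a',b')$). Because the spokes meet the rest of $G$ only at $u$ and $v$, any path from $x$ to $y$ leaves the $x$-spoke through $u$ or through $v$ and enters the $y$-spoke the same way, giving $d(x,y)=\min\{d(x,u)+d(u,y),\,d(x,v)+d(v,y)\}$. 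In both regimes we issue at most two oracle queries, and the decision about which queries to make and which formula to apply can be fixed in advance from computable data, yielding a $\twobtt$-reduction of norm $2$. The converse $(d\res S)\leq_1 d$ is immediate, using the identity on $S$ and routing pairs outside $S$ to a computable selection of distinct off-$S$ diagonal pairs, exactly as at the end of Lemma~\ref{lemma:standarddistance}.

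The main obstacle I foresee is the bookkeeping in the shared-spoke case: with three position-types for $x$ and three for $y$, the proof expands into roughly nine sub-cases, and in each one must verify that a single query to $d(a,b)$ genuinely suffices. In particular, one must rule out that some ``round-the-world'' path through $u$, another spoke, and $v$ could be shorter than every candidate listed above; this is where the length of the elongation chains (chosen equal to $2+\sigma(0)$, exceeding any intra-spoke distance) is essential, as it forces any detour outside the spoke to cost strictly more than the corresponding intra-spoke route.
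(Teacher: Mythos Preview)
Your treatment of the shared-spoke case and of the computability of $p_a,p_b$ is fine (though the paper finds $p_a(x),p_b(x)$ differently, via a single $u$-to-$v$ path through $x$), but the cross-spoke case contains a real gap.  The assertion that a path ``leaves the $x$-spoke through $u$ or through $v$ and enters the $y$-spoke the same way'' is false: a path may exit at $u$, traverse some third spoke to $v$, and enter the $y$-spoke at $v$.  Consequently, the quantities you compute as $d(x,u),d(x,v),d(y,u),d(y,v)$ are only the \emph{in-spoke} distances $d^*(\cdot,\cdot)$, and these can be strictly larger than the true distances.  Concretely, take two spokes of type $\langle 100\rangle$ (so $l=102$ and $d(a,b)=102$) together with a third spoke of type $\langle 0\rangle$, so that $d(u,v)=6$.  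For $x=a_x$ and $y=b_y$ on the two long spokes, your formula gives $\min\{102+204,\,204+102\}=306$, whereas the path $a_x\to u\to v\to b_y$ has length $102+6+102=210$.  Your elongation argument in the final paragraph only addresses round-the-world detours in the \emph{shared}-spoke case; it does not (and cannot) rule out this cross-spoke shortcut.

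The paper's proof handles exactly this by treating the single number $d(u,v)$ as a hardcoded constant in the reduction (legitimate for a non-uniform $\btt$-reduction).  Its list of candidate paths explicitly includes routes such as $x\to a_x\to u\to v\to b_y\to y$, whose length involves $d(u,v)$; the minimum is then taken over all thirteen candidates.  Your argument is easily repaired along these lines: once $d(u,v)$ is available as finite advice, set $d(x,u)=\min\bigl(d^*(x,u),\,d^*(x,v)+d(u,v)\bigr)$ and similarly for the other three quantities, and then your formula $d(x,y)=\min\{d(x,u)+d(u,y),\,d(x,v)+d(v,y)\}$ becomes correct while still using only the two oracle values $d(a_x,b_x)$ and $d(a_y,b_y)$.
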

\begin{pf}
Given any $x\in G$ with $x\notin\{ u,v\}$,
we search for a path in $G$ which goes from $u$ through $x$
to $v$ without containing any node more than once.
Then we enumerate $G$ until two nodes on
this path (distinct from $u$ and $v$) are each adjacent to
at least three other nodes.  By the structure of $G$,
these two nodes are the desired $p_a(x)$ and $p_b(x)$,
with $p_a(x)$ being the one closer to $u$ along the path we found,
and from this path we can also determine whether $x$
lies above $p_a(x)$, below $p_b(x)$, or between the two of them.

To compare $d$ with $d\res S$, let $x,y\in G-\{ u,v\}$.
Find the
end points $a_x=p_a(x)$, $a_y=p_a(y)$, $b_x=p_b(x)$,
and $b_y=p_b(y)$ of the spokes containing $x$ and $y$.
We consider first the case where $x$ lies between $a_x$ and $b_x$
and $y$ lies between $a_y$ and $b_y$; the subsequent cases
will then be easier.  These $x$ and $y$ lie on the same spoke in $G$
iff $a_x=a_y$, and if so,
we can then check whether $x$ and $y$
lie on the same path between $p_a(x)$ and $p_b(x)$.
We use $d'(x,a_x)$
to denote the distance from $x$ to $a_x$ along the path
from $a_x$ to $a_y$ through $x$, which we can compute;
similarly for $d'(y,b_y)$, etc.  It is important to note that
$d'(x,a_x)$ may fail to equal $d(x,a_x)$, since there could
be a separate path from $a_x$ to $b_x$ so short
that $d(a_x, b_x)+d'(b_x,x) < d'(a_x,x)$.
Finally, we can readily find the elongation path lengths
of these spokes: the length $l_x$ of the direct
path from $a_x$ to $u$ (and of the path from
$b_x$ to $v$), and the similar length $l_y$ for $y$.


From the structure of $G$, we see that the shortest path
from $x$ to $y$ must be one of the following thirteen paths.
(This requires a combinatorial argument, based on the valences of the nodes
-- which determine the number of options a path has at each point,
given that the path should not go through the same node twice.
Certain other paths are combinatorially possible but are not
on this list; they are discussed below as $P_{13},\ldots,P_{20}$.)
$$\begin{array}{cll}
\text{Path}&\text{Route}&\text{Length}\\
\hline
P_0 & x\text{~to~}a_x\text{~to~}u\text{~to~}a_y\text{~to~}y
& d'(x,a_x)+l_x+l_y+d'(a_y,y)\\
P_1 & x\text{~to~}a_x\text{~to~}u\text{~to~}a_y\text{~to~}b_y\text{~to~}y
& d'(x,a_x)+l_x+l_y+d(a_y,b_y)+d'(b_y,y)\\
P_2 & x\text{~to~}a_x\text{~to~}u\text{~to~}v\text{~to~}b_y\text{~to~}y
& d'(x,a_x)+l_x+d(u,v)+l_y+d'(b_y,y)\\
P_3 & x\text{~to~}a_x\text{~to~}b_x\text{~to~}v\text{~to~}b_y\text{~to~}y
& d'(x,a_x)+d(a_x,b_x)+l_x+l_y+d'(b_y,y)\\
P_4 & x\text{~to~}b_x\text{~to~}v\text{~to~}b_y\text{~to~}y
& d'(x,b_x)+l_x+l_y+d'(b_y,y)\\
P_5 & x\text{~to~}b_x\text{~to~}v\text{~to~}b_y\text{~to~}a_y\text{~to~}y
& d'(x,b_x)+l_x+l_y+d(b_y,a_y)+d'(a_y,y)\\
P_6 & x\text{~to~}b_x\text{~to~}v\text{~to~}u\text{~to~}a_y\text{~to~}y
& d'(x,b_x)+l_x+d(v,u)+l_y+d'(a_y,y)\\
P_7 & x\text{~to~}b_x\text{~to~}a_x\text{~to~}u\text{~to~}a_y\text{~to~}y
& d'(x,b_x)+d(b_x,a_x)+l_x+l_y+d'(a_y,y)\\
\hline
P_8 & x\text{~to~}a_x=a_y\text{~to~}y & d'(x,a_x)+d'(a_y,y)\\
P_9 & x\text{~to~}b_x=b_y\text{~to~}y & d'(x,b_x)+d'(b_y,y)\\
P_{10} & x\text{~to~}a_x=a_y\text{~to~}b_y\text{~to~}y
& d'(x,a_x)+d(a_y,b_y)+d'(b_y,y)\\
P_{11} & x\text{~to~}b_x=b_y\text{~to~}a_y\text{~to~}y
& d'(x,b_x)+d(b_y,a_y)+d'(a_y,y)\\
P_{12} & x\text{~to $y$} & d'(x,y)\\
\end{array}$$
Here paths $P_8$ through $P_{12}$ are separated because they
apply only if $a_x=a_y$
(that is, if $x$ and $y$ lie on the same spoke).  $P_{12}$
only applies if $x$ and $y$ lie on the same path through that spoke.
Paths $P_4$ through $P_7$ may be seen as vertical reflections
of paths $P_0$ through $P_3$.  Every one of these thirteen paths can,
under certain circumstances, be the shortest path from $x$ to $y$.
There are eight other paths which one can define from $x$ to $y$
without repeating any nodes (given that one always takes the shortest route
between $a_x$ and $b_x$, or between $a_y$ and $b_y$, or between
$u$ and $v$, whenever the path route says to go from one
of these nodes to the other).  Here are four of them; the other four
are their vertical reflections.
$$\begin{array}{cl}
\text{Path}&\text{Route}\\
\hline
P_{13} &
x\text{~to~}a_x\text{~to~}u\text{~to~}v\text{~to~}b_y\text{~to~}a_y\text{~to~}y\\
P_{14} &
x\text{~to~}a_x\text{~to~}b_x\text{~to~}v\text{~to~}b_y\text{~to~}a_y\text{~to~}y\\
P_{15} &
x\text{~to~}a_x\text{~to~}b_x\text{~to~}v\text{~to~}u\text{~to~}a_y\text{~to~}y\\
P_{16} &
x\text{~to~}a_x\text{~to~}b_x\text{~to~}v\text{~to~}u
\text{~to~}a_y\text{~to~}b_y\text{~to~}y\\
\end{array}$$
None of these eight paths can be the shortest path from $x$ to $y$.
For example, $P_{13}$ could be shortened by going from $u$ directly
to $a_y$, thereby reducing a subpath of length $d(u,v)+l_y+d(b_y,a_y)$
to a subpath of length $l_y$.  $P_{14}$ and $P_{15}$ also take longer routes than
necessary from $a_x$ to $a_y$, and $P_{16}$ takes a longer route
than necessary from $v$ to $b_y$; similarly for the four vertical reflections.
So, by brute force, we have seen that the shortest path must be one
of $P_0,\ldots,P_{12}$.

Now, given that $d'$ is computable and that $d(u,v)$ is a finite piece of information,
we see that the length of each $P_i$ with $i<13$ is computable from an oracle
for $d\res S$, uniformly in $x$ and $y$, and in particular from just two questions
to that oracle:  the values $d(a_x,b_x)$ and $d(a_y,b_y)$.
So this oracle also allows us to compute $d(x,y)$,
by taking the minimum of those thirteen lengths.
For these $x$ and $y$, therefore, we have a \btt-reduction
of norm $2$.

If $x$ lies between $a_x$ and $a_y$, as above,
but $y=u$, then a similar but easier argument applies:
the shortest path from $u$ to $x$ has length either
$(l_x+d'(a_x,x))$, or $(l_x+d(a_x,b_x)+d'(b_x,x))$,
or possibly $(d(u,v)+l_x+d'(b_x,x))$.
When $y$ lies on an elongation path of length $l_y$
beginning at $u$, we compute $d(a_y,x)$ and $d(u,x)$ as above,
and use them to determine $d(y,x)$.  The reader should
be able to produce a similar argument when $y=v$,
and when $y$ lies on an elongation path beginning at $v$.
Finally, in case neither $x$ nor $y$ lies
between $a_x$ and $a_y$ (resp.\ $b_x$ and $b_y$),
the argument is similar, using the two nodes
at the end of the elongation path containing $x$
and the similar two nodes for $y$, and using these
to take the minimum of the four possible ways
of going from $x$ to $y$.
So we have computed $d(x,y)$ for all possible pairs $(x,y)\in G^2$.
Moreover, the only values we ever required from the
$(d\res S)$-oracle were $d(a_x,b_x)$ and $d(a_y,b_y)$,
and we were able to compute in advance the value
of $d(x,y)$ for each possible answer the oracle might give.
Thus $d\leq_\twobtt (d\res S)$.  The reverse reduction,
$(d\res S)\leq_1 d$, is proven just as in Lemma
\ref{lemma:standarddistance}.
\qed\end{pf}

We now repeat Corollary \ref{cor:omegaspec} for
just the functions $n$-approximable from above.
This is where it becomes clear why we used two centers and
elongated spokes in our graphs in Definition
\ref{defn:elongatedspokes}.  If the graph had only one center $u$,
then the distance $d(b_x,u)$ would be
$n$-approximable from above, but the distance
$d(b_x,b_y)$ would in general only be $(2n)$-approximable
from above: it would decrease whenever either
$d(u,b_x)$ and $d(u,b_y)$ decreased,
since it would equal the sum of these two values.
It may not be immediately clear why having a second center
solves this problem:  for example, the distance $d(a_x,b_y)$ now
depends on both $d(a_x,v)$ and $d(b_y,u)$,
each of which could decrease as many as $n$ times.
However, $d(a_x,b_y)$ is now equal to the minimum
of these two values (up to a computable difference),
not to their sum.  With the minimum, we will
eventually avoid this difficulty
by turning to the countdown function for an arbitrary
function which is $n$-approximable from above.
First, though, we prove the basic result.


\begin{thm}
\label{thm:nspec}
For every $n<\omega$, there exists a computable connected graph $G$
whose distance function $d_G$ is intrinsically $2n$-approximable from above and
such that,
for every function $f$ which is $n$-approximable from above,
there is some computable graph $H\cong G$ whose distance function
$d_H$ has $d_H\leq_\twobtt f \leq_\onebtt d_H$.
\end{thm}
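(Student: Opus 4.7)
The plan is to adapt the construction of Corollary~\ref{cor:omegaspec}, replacing the standard graphs of Definition~\ref{defn:spokes} with the elongated standard graphs of Definition~\ref{defn:elongatedspokes} and restricting the permissible spoke types to strictly decreasing sequences of length at most $n+1$. First, I would fix a computable function $\Gamma:\omega\to\omega^{<\omega}$ whose range consists of all strictly decreasing sequences of length at most $n+1$, each appearing as $\Gamma(m)$ for infinitely many $m$, and let $G$ be a computable presentation of the elongated standard graph of type $\Gamma$. Since $\langle 0\rangle$ lies in the range of $\Gamma$, the graph $G$ has $d_G(u,v)=6$, a constant preserved in every computable copy.

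Next, to prove intrinsic $2n$-approximability, I would invoke Lemma~\ref{lemma:elongatedstandarddistance} in any computable $H\cong G$. The computable functions $p_a, p_b$ recover the spoke endpoints $a_x, b_x, a_y, b_y$, and all the purely combinatorial quantities (elongation lengths $l_x, l_y$, the $d'$-values, and the constant $d_H(u,v)=6$) are obtainable in finite time, leaving the $13$-path formula as a function of just the two time-varying spoke distances $d_H(a_x, b_x)$ and $d_H(a_y, b_y)$. Each of these is approximable from above by its shortest in-spoke path length, with at most $n$ mind changes since each spoke type has length $\leq n+1$; a short combinatorial check rules out any external route through $u$ or $v$ being shorter. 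I would then verify that each of the $13$ shortest-candidate paths $P_0, \ldots, P_{12}$ depends on at most one of these two spoke distances and does so monotonically, so that the pointwise minimum over the $13$ formulas suffers at most $n+n=2n$ mind changes.

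For the second half, given an $n$-approximable-from-above function $f=\lim_s g(x,s)$ via an $n$-approximation $g$, I would build $H$ by mimicking the approach of Corollary~\ref{cor:omegaspec}: set $\Delta_s(2x)$ equal to the strictly decreasing trace of $g(x,\cdot)$ through stage $s$ (of length $\leq n+1$), set $\Delta_s(2x+1)=\Gamma(x)$, and take $H$ to be a computable presentation of the elongated standard graph of type $\Delta=\lim_s\Delta_s$. The isomorphism $H\cong G$ is immediate because the odd-indexed spokes already exhaust each strictly decreasing sequence of length $\leq n+1$ with infinite multiplicity. Lemma~\ref{lemma:elongatedstandarddistance} then gives $d_H\leq_\twobtt(d_H\res S_H)$, and $(d_H\res S_H)\leq_\onebtt f$ via $d_H(a_{2x}, b_{2x})=2+f(x)$ together with the observation that odd-indexed spoke distances are computable in the index; composing yields $d_H\leq_\twobtt f$. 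The reverse $f\leq_\onebtt d_H$ is given by the computable map $x\mapsto(a_{2x}, b_{2x})$.

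The main obstacle is the mind-change bookkeeping in the second paragraph: I need to verify that none of the $13$ shortest-candidate paths combines both of $d_H(a_x, b_x)$ and $d_H(a_y, b_y)$ as a sum within a single expression. This is where the two-center design of Definition~\ref{defn:elongatedspokes} becomes essential: any simple path from $x$'s spoke to $y$'s spoke must pass through $u$ or $v$, but traversing the interior of the other spoke would require reentering and would no longer be a simple path; so only one of the two time-varying distances can enter each formula. Once this case analysis is completed, the $2n$ bound follows cleanly from the fact that the minimum of finitely many monotone functions in two inputs, each of which decreases at most $n$ times, can decrease at most $2n$ times overall.
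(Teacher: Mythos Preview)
Your proposal is correct and follows essentially the same approach as the paper: use the elongated standard graph of type $\Gamma$ with spoke types restricted to strictly decreasing strings of length at most $n+1$, invoke Lemma~\ref{lemma:elongatedstandarddistance} both for the intrinsic $2n$-approximability and for the $\twobtt$/$\onebtt$ reductions, and encode a given $f$ via the decreasing trace of its $n$-approximation. The only cosmetic differences are that the paper adjoins the $f$-coded spokes directly to a copy of $G$ rather than interleaving them at even indices, and that the paper does not stress your ``each of the thirteen paths uses at most one spoke distance'' observation, since for the $2n$ bound it already suffices that the minimum can change only at the at most $n+n$ stages where one of the two approximated inputs $d_H(a_x,b_x)$, $d_H(a_y,b_y)$ changes.
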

\begin{pf}
For $n=0$, just take $G$ to be any computable graph with an
intrinsically computable distance function.
Since all total computable functions are \btt-equivalent
with norm $1$, this suffices.  (For example,
the complete graph on the domain $\omega$ could serve as $G$.)

For $n>0$, define a computable function $\Gamma:\omega\to \omega^{\leq n}$
so that the range is $\Gamma$ contains exactly those tuples
of length $\leq n$ which are strictly decreasing;
moreover, ensure that every such tuple
has infinite preimage in $\omega$ under $\Gamma$.
Let $G$ be a computable presentation of the elongated
standard graph of type $\Gamma$.
We claim that this $G$ instantiates the theorem.

To see that the distance function $d_H$ on an arbitrary
computable graph $H\cong G$ is $(2n)$-approximable from above,
notice that just as in Lemma \ref{lemma:elongatedstandarddistance},
we can compute the functions $p_a$ and $p_b$ for $H$.
Since every $\sigma\in\rg{\Gamma}$ has $|\sigma|\leq n+1$,
the paths from $p_a(x)$ to $p_a(y)$ within the spoke of any $x\in H$
have at most $(n+1)$ distinct lengths, so that $d_H(p_a(x),p_a(y))$
can be approximated from above with at most $n$ mind changes.
Then we apply Lemma \ref{lemma:elongatedstandarddistance} to see
that the values $d_H(p_a(x),p_b(x))$ and $d_H(p_a(y),p_b(y))$
determine $d_H(x,y)$, by taking minimums of the thirteen paths
as described there.  So we simply approximate $d_H(p_a(x),p_b(x))$
and $d_H(p_a(y),p_b(y))$ from above, and whenever either
approximation is reduced, we reduce our approximation of
$d_H(x,y)$ accordingly.  This gives an approximation of $d_H(x,y)$
from above with at most $2n$ changes:  the $\leq n$ stages at which
the approximation to $d_H(p_a(x),p_b(x))$ changed,
and the $\leq n$ stages at which $d_H(p_a(y),p_b(y))$ changed.

Next, given an arbitrary function $f$ which is $n$-approximated
from above by $g$, we build a graph $H$ by starting with $G$
and adjoining, for each $x\in\omega$, a spoke of type
$$\sigma_x=\la g(x,s_0),g(x,s_1),\ldots,g(x,s_k)\ra,$$
where $s_0=0$ and each $s_{i+1}=\min\Set{s}{g(x,s)<g(x,s_i)}$.
It is clear how one can do this effectively, starting with a spoke of type
$\la g(x,s_0)\ra$ and extending it to a larger spoke
each time a new value of $g(x,s)$ appears.
Of course, we have $k\leq n$, and so $\sigma_x=\Gamma(m)$
for infinitely many $m$.  Therefore, the new spokes we
add do not change the isomorphism type, but leave $H\cong G$.

One new problem arises:  it is no longer immediate that
$d(a_m,b_m)=2+f(m)$.  To see the problem,
notice that, whereas in the standard presentation of a graph
(with only one center), it was clear that the shortest path from
$a_m$ to $b_m$ was one of the paths between them within that
spoke.  Now, however, there is a path from $a_m$ to $u$,
then through another spoke to $v$, then up to $b_m$,
and the length of this path is $l_m+d(u,v)+l_m$.
The elongation paths were given their length (call
it $l_m$, for the $m$-th spoke) precisely to ensure
that this alternative is not the shortest path from $a_m$ to $b_m$,
and since $l_m$ was chosen to be the length of the longest
path from $a_m$ to $b_m$ within the spoke, it is clear that
this has been accomplished.  So $f(m)+2$ really does equal
$d(a_m,b_m)$.  It follows that $f\leq_\onebtt d_H$ as required,
thanks to Lemma \ref{lemma:elongatedstandarddistance},
which now gives the reducibilities $d_H\leq_\twobtt f\leq_\onebtt d_H$,
exactly as desired.
\qed\end{pf}

To strengthen Theorem \ref{thm:nspec},
we would like to make $G$ have a distance function which
is intrinsically $n$-approximable from above.
The proof given above does not accomplish this.
In particular, for the opposite end points $a_j$ and $b_k$
of two distinct spokes in $H$, with $j\neq k$, the formula
for $d_H(a_j,b_k)$ involves a minimum of eight different
values, some of which depend on $d_H(a_j,b_j)$ and
others on $d_H(a_k,b_k)$.  Each of these two
distances could decrease as many as $n$ times
as our computable approximations to $f(j)$ and $f(k)$
decrease, and so the minimum could decrease as many as $2n$ times.
This is the same problem we would have had using the simpler
(single-center, non-elongated) graphs of Definition
\ref{defn:spokes}, except that there the problem involved
a sum, not a minimum.  One is led to wonder what purpose
Definition \ref{defn:elongatedspokes} served.
The proof of the following theorem gives the answer.

\begin{thm}
\label{thm:ncountdownspec}
For every $n<\omega$, there exists a computable connected graph $G$
whose distance function $d_G$ is intrinsically $n$-approximable from above and
such that,
for every function $f$ with a computable $n$-approximation $g$ from above,
there is some computable graph $H\cong G$ whose distance function $d_H$ has
$$d_H\leq_\twobtt \lim_s c \leq_\onebtt d_H,$$
where $c$ is the countdown function for $g$ with bound $n$.
\end{thm}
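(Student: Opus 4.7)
For $n=0$ the theorem is immediate by taking $G$ to be any computable graph with intrinsically computable distance function (for example the complete graph on $\omega$). For $n\geq 1$, my plan is to adapt the construction from Theorem \ref{thm:nspec} with two key modifications: force a common elongation length $L:=n+2$ for every spoke, and encode the countdown function $c$ rather than $g$ itself. Specifically, I would let $\Gamma$ enumerate, each infinitely often, the $n+1$ strictly decreasing tuples $\langle n,n-1,\ldots,k\rangle$ for $k\in\{0,\ldots,n\}$ (each with first entry $n$, so all spokes get elongation $L$), and take $G$ to be a computable presentation of the elongated standard graph of type $\Gamma$. Given a computable $n$-approximation $g$ of $f$ from above with countdown $c$, I would build $H$ by allocating for each $x$ a \emph{data spoke} whose type at stage $s$ is $\langle n,n-1,\ldots,c(x,s)\rangle$ (extending it by a new path of length $c(x,s)+2$ each time $c(x,s)$ drops, which happens at most $n$ times), and by adding infinitely many \emph{filler spokes} of each of the $n+1$ allowed types. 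The spoke-type multisets of $G$ and $H$ then coincide, so $H\cong G$.

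The two $\btt$-reductions should follow as in Theorem \ref{thm:singledegree}. For $\lim_s c \leq_\onebtt d_H$ the construction supplies a computable map $x\mapsto (a_x,b_x)$ with $d_H(a_x,b_x)=2+\lim_s c(x)$, a single query. For $d_H\leq_\twobtt \lim_s c$ I would apply Lemma \ref{lemma:elongatedstandarddistance} to reduce $d_H$ to two queries of $d_H\res S$; each such corner distance is either a computable constant (filler spoke) or one $\lim_s c$-query via a data spoke. Here $d_H(u,v)=2L+2$ is itself a computable constant, because the filler spoke of type $\langle n,n-1,\ldots,0\rangle$ forces $\min_m d_H(a_m,b_m)=2$.

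The main obstacle---and the reason for insisting on a common elongation---is proving that $d_G$ is intrinsically $n$-approximable from above. My plan is to establish the following structural claim: for every pair $(x,y)$ of nodes in any computable copy $H'$ of $G$, the value $d_{H'}(x,y)$ lies in a finite set $V(x,y)$ with $|V(x,y)|\leq n+1$, uniformly computable from $H'$ and $(x,y)$. Granting this, I obtain the desired intrinsic $n$-approximation by computing (at stage $s$) the set $V(x,y)$---feasible in finite time using the functions $p_a,p_b$ of Lemma \ref{lemma:elongatedstandarddistance} together with the observation that $u,v$ are the only nodes whose valence will exceed the computable bound $n+4$---performing a BFS of depth $s$ to obtain the current shortest $x$-$y$ path length $\ell_s$, and outputting the smallest element of $V(x,y)$ that is at least $\ell_s$ (or $\max V(x,y)$ if none). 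This output is nonincreasing, takes values only in $V(x,y)$, and converges to $d_{H'}(x,y)$, hence changes at most $|V(x,y)|-1\leq n$ times.

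The justification of $|V(x,y)|\leq n+1$ is where the constant-elongation symmetry does its work. For a representative different-spoke pair $(x,y)$ I would revisit the thirteen candidate paths of Lemma \ref{lemma:elongatedstandarddistance}: since $d(u,v)\geq 2L+2$, the paths $P_2,P_6$ are dominated by $P_1,P_5$. Crucially, the common elongation $L$ makes the non-variable constants of $P_1$ and $P_3$ coincide (both equal $d'(x,a_x)+2L+d'(b_y,y)$), with $P_1$ contributing $d(a_y,b_y)$ and $P_3$ contributing $d(a_x,b_x)$; hence $\min(P_1,P_3)=C+M$ where $C$ is a computable constant and $M:=\min(d(a_x,b_x),d(a_y,b_y))\in\{2,\ldots,L\}$, and analogously $\min(P_5,P_7)=C'+M$. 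The overall distance is therefore $\min(P_0,P_4,\min(C,C')+M)$, which takes at most $n+1$ distinct values as $M$ ranges over its $n+1$ possibilities. Analogous (and generally easier) calculations will handle same-spoke pairs (where only $d(a_m,b_m)$ varies), pairs involving $u$ or $v$, and pairs on elongation paths. The hard part throughout is the case-by-case verification that the various constants really do align, so that only the single quantity $M$ varies.
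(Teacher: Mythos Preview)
Your proposal is correct and matches the paper's approach: the same $\Gamma$ (the $n+1$ tuples $\langle n,n-1,\ldots,k\rangle$, all with first entry $n$), the same countdown-spoke construction of $H$, and the same appeal to Lemma~\ref{lemma:elongatedstandarddistance} for the two \btt-reductions. For intrinsic $n$-approximability the paper packages the argument as a short combinatorial lemma (Lemma~\ref{lemma:dropby1}: if $g,h$ both start at $n$ and never drop by more than $1$ per step, then $\min(g+C,h+D)$ changes at most $n$ times), whereas you unfold the same mechanism explicitly by noting that the common elongation $L$ makes the additive constants in $P_1,P_3$ (and in $P_5,P_7$) coincide, so that only $M=\min(d(a_x,b_x),d(a_y,b_y))\in\{2,\ldots,n+2\}$ varies---two phrasings of the same idea.
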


So we have achieved intrinsic $n$-approximability from above for the distance
function, while still allowing the distance function -- under a coarser
reducibility -- to realize all functions $n$-approximable from above.
Since $f\equiv_{\bT} \lim_s c$, the theorem shows that $f\equiv_{bt} d_H$,
where $H$ is the graph built for $f$, but the \btt-reducibility
and the specific norms have been lost.  Before proving the theorem,
we summarize this as a corollary.
\begin{cor}
\label{cor:bTspec}
For every $n$, there exists a computable connected graph $G$
such that the \bT-degree spectrum of the distance function on $G$
contains exactly the \bT-degrees $n$-approximable from above
(that is, those \bT-degrees which contain a function $n$-approximable
from above).
\qed\end{cor}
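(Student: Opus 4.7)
The plan is to build $G$ as an elongated standard graph in which every spoke has $\sigma(0) = n$, so that all elongation paths share the uniform length $n+2$ and every spoke-distance $d(a_m, b_m)$ lies in the $(n{+}1)$-element set $\{2, 3, \ldots, n+2\}$. For $n = 0$ the statement is trivial (take $G$ to have intrinsically computable distance function, e.g.\ the complete graph on $\omega$), so assume $n > 0$. I would fix a computable $\Gamma$ enumerating, with infinite multiplicity, each of the strictly decreasing tuples $\langle n, n-1, \ldots, m\rangle$ for $m = 0, 1, \ldots, n$, and let $G$ be a computable presentation of the elongated standard graph of type $\Gamma$.

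To show $d_G$ is intrinsically $n$-approximable from above, I would take an arbitrary computable $H \cong G$, invoke Lemma \ref{lemma:elongatedstandarddistance} to compute $p_a, p_b$ on $H - \{u, v\}$, and for each $x$ build a nonincreasing computable approximation $\tilde{\alpha}_x(s)$ to $\alpha_x := d_H(p_a(x), p_b(x))$. Since every spoke has at most $n+1$ distinct internal path-lengths, all drawn from $\{2, \ldots, n+2\}$, each $\tilde\alpha_x$ has at most $n$ mind changes (initialized at $n+2$). The crucial observation is that in the thirteen-path analysis from the lemma, when $x$ and $y$ lie on distinct spokes the paths $P_1, P_3$ share the same $d'$-terms and the same total elongation length $2(n+2)$, and differ only in whether they traverse $\alpha_x$ or $\alpha_y$; the same happens for $P_5, P_7$. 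Combined with $d_H(u, v) = 2n + 6$, which is a fixed computable constant (witnessed by any spoke with $\alpha_m = 2$), this collapses the formula into
\[
d_H(x, y) = \min\bigl(K,\; C + \min(\alpha_x, \alpha_y)\bigr)
\]
for computable $K, C$ depending on $x, y$. Since $\min(\alpha_x, \alpha_y)$ lies in $\{2, \ldots, n+2\}$ and is nonincreasing, the corresponding approximation $\tilde d(x, y, s)$ has at most $n$ mind changes. The same-spoke, elongation-path, and center subcases only involve a single $\tilde\alpha_x$ and are easier.

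For the second half, given $f = \lim_s g$ with $g$ an $n$-approximation from above, and $c$ the countdown function for $g$ with bound $n$, I would build $H$ by starting from a computable copy of $G$ and, for each $x \in \omega$, adjoining one extra spoke whose type evolves as $c(x, s)$ counts down: initially of type $\langle n\rangle$, and whenever $c(x, s)$ drops to a new value $m$, adjoining a fresh path between that spoke's endpoints of length $2 + m$. The resulting spoke has final type $\langle n, n-1, \ldots, \lim_s c(x, s)\rangle$, which already occurs infinitely often in $\rg{\Gamma}$, so the enlarged $H$ remains isomorphic to $G$. By construction $d_H(a_x, b_x) = 2 + \lim_s c(x)$, so $\lim_s c \leq_\onebtt d_H$ via the single query $d_H(a_x, b_x)$. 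Conversely, Lemma \ref{lemma:elongatedstandarddistance} yields $d_H \leq_\twobtt (d_H \upharpoonright S)$, and $(d_H \upharpoonright S) \leq_\onebtt \lim_s c$ via the reduction $\langle a_x, b_x\rangle \mapsto x$ combined with ``add $2$ to the oracle's answer''; composing gives $d_H \leq_\twobtt \lim_s c$.

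The main obstacle I anticipate is the intrinsic $n$-approximability step. One must verify the paired-path collapse rigorously across all subcases of Lemma \ref{lemma:elongatedstandarddistance} (distinct spokes, same spoke, elongation points, center nodes), and must initialize the global approximation $\tilde d(x, y, s)$ with care so that the mind-change count is bounded by $n$ from stage $0$ onward, not merely from the stage at which $p_a(x), p_b(x), p_a(y), p_b(y)$ first become visible. The reason it is essential to encode $\lim_s c$ rather than $f$ itself is precisely that doing so confines each $\alpha_x$ to the bounded range $\{2, \ldots, n+2\}$; this is exactly what forces $\min(\alpha_x, \alpha_y)$ to take at most $n+1$ distinct values and hence to change at most $n$ times, turning the $2n$ of Theorem \ref{thm:nspec} into $n$.
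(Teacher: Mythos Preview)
Your proposal is correct and follows essentially the same route as the paper, which derives the corollary from Theorem~\ref{thm:ncountdownspec}: the same graph $G$ (elongated standard graph on the $(n{+}1)$ tuples $\langle n,n{-}1,\ldots,m\rangle$), the same encoding of the countdown limit $\lim_s c$ into spoke-distances, and the same appeal to Lemma~\ref{lemma:elongatedstandarddistance} for the \twobtt/\onebtt\ reductions.

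There is one presentational difference worth noting. For intrinsic $n$-approximability you observe that the path-lengths $P_1,P_3$ (and $P_5,P_7$) share identical computable offsets and differ only in which of $\alpha_x,\alpha_y$ they add, so the thirteen-path minimum collapses to $\min\bigl(K,\;C+\min(\alpha_x,\alpha_y)\bigr)$; then the bounded range $\{2,\ldots,n{+}2\}$ of $\min(\alpha_x,\alpha_y)$ gives at most $n$ changes. The paper instead invokes Lemma~\ref{lemma:dropby1}, which shows directly that a minimum of arbitrarily many terms $g_i+C_i$, with each $g_i$ starting at $n$ and dropping by at most~$1$ per step, itself changes at most $n$ times. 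Your pairing argument is a clean specialization; the paper's lemma is slightly more general (it does not require the offsets on $\alpha_x$ and $\alpha_y$ to match) but amounts to the same point here.

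One small gap: you establish $d_H\equiv_{\btt}\lim_s c$ but never explicitly close the loop to $f$. To conclude the \bT-spectrum statement you must add that $f\equiv_{\bT}\lim_s c$ whenever $g$ approximates $f$ from above (the remark following Theorem~\ref{thm:countdown}), and you should state that the intrinsic $n$-approximability of $d_H$ settles the reverse inclusion of the spectrum, since $d_H$ itself then witnesses that its \bT-degree contains an $n$-approximable-from-above function.
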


\begin{pftitle}{Proof of Theorem \ref{thm:ncountdownspec}}
Assume $n>0$, since otherwise the description
in the proof of Theorem \ref{thm:nspec} suffices.
Let $\Gamma$ be a computable function with
$$ \Gamma(0) = \la n\ra,~\Gamma(1)=\la n,n-1\ra,\ldots,~\Gamma(n)=\la n,n-1,\ldots,0\ra$$
and with $\Gamma(m+n+1)=\Gamma(m)$ for all $m\in\omega$,
so that each of these strings appears infinitely
often in the range of $\Gamma$.
%
Our graph $G$ is a computable presentation
of the standard graph of type $\Gamma$.

Our first goal is to show that for every computable graph
$H$ isomorphic to $G$, the distance function
$d(x,y)$ of $G$ is always $n$-approximable from above.
For $x,y\in G$, the proof of Lemma
\ref{lemma:elongatedstandarddistance} gives $d_H(x,y)$ as the minimum
of finitely many values, and that each of those values
being a sum of computable values along with $d_H(p_a(x),p_b(x))$
or $d_H(p_a(y),p_b(y))$.  Crucially, though, none of those
values (whose minimum we take) involves either $d_H(p_a(x),p_b(x))$
or $d_H(p_a(y),p_b(y))$ more than once.  Therefore, whenever
$d_H(p_a(x),p_b(x))$ decreases by $1$, certain of the values decrease
by $1$ and the rest stay unchanged; likewise, whenever
$d_H(p_a(y),p_b(y))$ decreases by $1$, certain other values decrease
by $1$ and the rest stay unchanged.  Moreover, the structure of $G$
shows that our approximations to
$d_H(p_a(x),p_b(x))$ and $d_H(p_a(y),p_b(y))$ never
decrease by more than $1$ at any stage.  (More exactly,
the approximation begins at $n$, so it can decrease at most $n$
times.  If at stage $s$ we thought $d_H(p_a(x),p_b(x))=7$,
and at stage $s+1$ we find a path of length $4$ between
$p_a(x)$ and $p_b(x)$, we can think of this as three
separate decreases by $1$.  In fact, the structure of $G$ is such
that in this case there will indeed be paths of length $5$ and
of length $6$ between $p_a(x)$ and $p_b(x)$, even though the path
of length $4$ appeared first.)  This allows us to
apply the following lemma, whose proof is a straightforward induction on $n$.

\begin{lemma}
\label{lemma:dropby1}
If $g(x,s)$ satisfies $g(x,s)-1\leq g(x,s+1)\leq g(x,s)$
for all $x$ and $s$, and $h(x,s)$ does the same,
and if $g(x,0)=h(x,0)=n$ and $C$ and $D$ are
arbitrary constants, 
then
$$|\Set{s}{\min (g(x,s+1)+C,h(x,s+1)+D)<\min (g(x,s)+C,h(x,s)+D)}|\leq n.$$
(By induction, the same then holds for
a minimum of arbitrarily many functions with these properties.)
\qed\end{lemma}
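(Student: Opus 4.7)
The plan is to argue by induction on $n$, based on two elementary facts about $M(x,s):=\min\bigl(g(x,s)+C,\ h(x,s)+D\bigr)$.  The first fact is that $M$ decreases by at most $1$ per step: since $g(x,s+1)\geq g(x,s)-1$ and $h(x,s+1)\geq h(x,s)-1$, both shifted sequences $g+C$ and $h+D$ drop by at most $1$, and hence
\[
M(x,s+1)\ \geq\ \min\bigl(g(x,s)+C-1,\ h(x,s)+D-1\bigr)\ =\ M(x,s)-1.
\]
The second fact is that $\lim_s M(x,s)\geq \min(C,D)$: in the setting of Theorem \ref{thm:ncountdownspec} the sequences $g$ and $h$ are distance approximations taking nonnegative integer values, so each of $g_\infty+C$ and $h_\infty+D$ is at least $\min(C,D)$.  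Since $M(x,0)=n+\min(C,D)$, the total decrease of $M$ over all stages is at most $n$, and because $M$ is integer-valued with each strict drop equal to $1$, there can be at most $n$ stages $s$ with $M(x,s+1)<M(x,s)$.

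To realize this as the advertised induction on $n$, take the base $n=0$, which forces $g\equiv h\equiv 0$ and $M$ constant.  For the inductive step, locate the first stage $s_0$ where $M$ strictly drops (if none, there is nothing to show), observe $M(x,s_0+1)=M(x,s_0)-1$ by the first fact, and apply the inductive hypothesis to the tail sequences $g(x,\cdot+s_0+1)$, $h(x,\cdot+s_0+1)$ in a slightly more general form --- ``the number of drops of $M$ is at most $M(x,0)-\min(C,D)$'' --- to get at most $n-1$ further drops.  The parenthetical extension to the minimum of arbitrarily many functions with the same properties then follows by a secondary induction on the number of functions, using the two-function case as the step.

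The only real subtlety is that after one strict drop of $M$ the individual functions $g$ and $h$ need not have equal starting values, so the natural inductive hypothesis must be phrased in terms of $M(x,0)-\min(C,D)$ rather than in the original form $g(x,0)=h(x,0)=n$.  This is the minor bookkeeping hidden behind the word ``straightforward.''
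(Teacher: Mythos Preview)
Your proof is correct and is essentially what the paper intends; the paper gives no details beyond asserting ``a straightforward induction on $n$.'' Your direct argument---that $M$ is integer-valued, nonincreasing, drops by at most $1$ per step, starts at $n+\min(C,D)$, and is bounded below by $\min(C,D)$ since $g,h$ are $\omega$-valued---is the cleanest way to see the bound, and your remark that a literal induction on $n$ needs the slightly generalized hypothesis is a fair point the paper's one-line claim glosses over.
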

So $d(x,y)$, being a minimum of exactly this type,
is also $n$-approximable from above.  Our use of countdown
functions enabled us to use the family $\Gamma$ whose
member strings never decrease by more than $1$

Next, consider any function $f$ which has an $n$-approximation
$g$ from above.  Let $c$ be the countdown function for this $g$
and for the constant bound $n$ on changes to $g$.  This was
defined in Theorem \ref{thm:countdown}:  $c(x,0)=n$,
and $c(x,s+1)=c(x,s)-1$ iff $g(x,s+1)<g(x,s)$,
with $c(x,s+1)=c(x,s)$ otherwise.  So $c$ keeps track
of the number of changes $g$ is still permitted to make
as it approximates $f(x)$, and clearly $c$ satisfies
the property mentioned above of never decreasing by more than $1$.
Recall that $f\equiv_{\bT}\lim_s c$, although stronger equivalences,
such as \tt-equivalence, may fail to hold.

Our construction of the graph $H$ for this $f$
mirrors that of Theorem \ref{thm:nspec}, only using the countdown
function $c$ in place of the computable approximation $g$ to $f$ itself.
We simply start with the computable graph $G$
(whose distance function is also computable)
and, for each $n$, add a new elongated spoke of type
$\la n,n-1,\ldots,k\ra$, where $k=\lim_s c(n,s)$.
Since there were already infinitely many spokes of this type
in $G$, the addition of one (or even infinitely many) more
does not change the isomorphism type; thus $H\cong G$.
Moreover, it is clear that we can add this spoke in a computable fashion:
it starts as a spoke of type $\la n\ra$, then has a path added
and becomes a spoke of type $\la n,n-1\ra$ when and if
we find an $s$ with $c(n,s)=n-1$, and so on.
By Lemma \ref{lemma:elongatedstandarddistance},
we have $d_H \leq_\twobtt\lim_s c\leq_\onebtt d_H$..
Unfortunately, $f$ cannot be substituted for $\lim_s c$
in these reductions, because in general we only have
$f\equiv_{\bT} \lim_s c$, and so we conclude,
as claimed by the corollary, that $f\equiv_{\bT} d_H$
for this graph $H$.
\qed\end{pftitle}

The same strategy could have been used in the proof
of Corollary \ref{cor:omegaspec}, of course.  There,
however, it was not necessary:  the distance function
on any computable graph is always $\omega$-approximable from above.
Moreover, if $x$ and $y$ in that graph $G_\omega$
(or a copy of it) lie on distinct spokes, then an upper
bound on the number of changes in the natural approximation
to $d(x,y)$ can be given just by adding the (computable)
upper bounds on the number of changes in the approximations
to $d(a_x,b_x)$ and to $d(a_y,b_y)$.  So it seemed
superfluous to convert the approximable-from-above
function $f$ given there to a function which never decreases by more than $1$,
and indeed, not converting it allowed us to retain stronger
reducibilities between $f$ and $d$.

The next natural question, which remains open,
is the existence of a computable connected graph
whose distance function is intrinsically
$n$-approximable from above, but which,
for every $f$ $n$-approximable from above,
has a computable copy $H$ with distance
function $d_H\leq_\twobtt f\leq_\onebtt d_H$.

Of course, stronger reducibilities between $f$
and $d_H$ would be most welcome as well.
Persistently throughout these results, we have had to allow
norm $2$ for the \btt-reduction from the distance function
to the function being encoded, even when
the reverse reduction could be shown to have \btt-norm $1$.
This seems to be a condition intrinsic to the notion
of the distance function.  When $f \leq_\onebtt d_H$,
and $m$ is fixed, there is a single pair $(x,y)$ of nodes in $H$
which determines $f(m)$.  There must be some separate
pair $(x',y')$ determining some other value $f(m')$
(unless $f$ is computable),
and then, in $H$, one can usually find nodes $w$ and $z$
such that $d(w,z)$ depends on both $d(x,y)$ and $d(x',y')$,
either via a sum (if the shortest path from $w$ to $z$
goes through $x$, then $y$, then $x'$, then $y'$),
or via a minimum (if there is one path from $w$ to $z$
which goes through $x$ and $y$, and a separate path
going through $x'$ and $y'$).  In both these cases,
$d(w,z)$ requires two pieces of information from $f$,
leading to a \btt-reduction of norm $2$ at best.
We would be significantly interested in any way
of developing this analysis into a proof of the following.
\begin{conj}
\label{conjecture:nspec}
If $G$ is a computable connected graph whose distance
function is intrinsically $n$-approximable from above, with $n>0$,
then there exists some function $f$ which is $n$-approximable from above
and such that, for every computable graph $H\cong G$ with distance
function $d_H$, we have $d_H\not\equiv_\onebtt f$.
\end{conj}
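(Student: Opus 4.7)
The approach is proof by contradiction via a diagonalization. Given $G$ as hypothesized, I would attempt to build $f$ that is $n$-approximable from above but such that no computable copy $H\cong G$ satisfies $d_H\equiv_\onebtt f$. Enumerate requirements $R_{e,j}$: for each $e$ coding a candidate $4$-tuple $(r_e,\Phi_e,s_e,\Psi_e)$ of computable reductions and each $j$ indexing a computable graph $H_j$, ensure that $(r_e,\Phi_e,s_e,\Psi_e)$ does not certify $d_{H_j}\equiv_\onebtt f$. Manage the $n$-approximability by starting $f(m)$ at a large computable upper bound (determined by the priorities of active requirements at $m$) and decrementing at most $n$ times; a finite-injury priority arrangement would ensure the mind-change budget is respected.

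The key structural observation is the composition identity: if $d_H\equiv_\onebtt f$ via the reductions, then $d_H(x,y)=\Psi_e(x,y,\Phi_e(s_e(x,y),d_H(\rho(x,y))))$ where $\rho=r_e\circ s_e$ is a computable map $H^2\to H^2$. Thus the restriction of $d_H$ to $\rho(H^2)$ computably determines all of $d_H$. Since $f$ must be noncomputable (otherwise we simply arrange this by one diagonalization), the values $d_H(r_e(m))$ take infinitely many distinct values as $m$ varies. The diagonalization for $R_{e,j}$ would then locate vertices $w,z\in H_j$ and two distinct essential pairs $(x_1,y_1),(x_2,y_2)$ such that $d_{H_j}(w,z)$ is forced (by the graph structure) to be an aggregate of $d_{H_j}(x_1,y_1)$ and $d_{H_j}(x_2,y_2)$—either a sum along a shortest path through both pairs or a minimum over alternative routes through each—while $d_{H_j}(\rho(w,z))$ is by assumption a single distance. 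Choosing $f$-values at the three positions $s_e(w,z),s_e(x_1,y_1),s_e(x_2,y_2)$ so that no computable $\Psi_e$ can simultaneously recover all three distances from single $f$-values would yield the contradiction, the intuition being that a single number $f(s_e(w,z))$ cannot computably encode two genuinely independent quantities $d_{H_j}(x_i,y_i)$ across an unbounded range.

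The main obstacle, and presumably why this remains a conjecture, is proving the required structural lemma in the generality of the hypothesis. Intrinsic $n$-approximability from above of $d_G$ is a purely computability-theoretic property, yet the diagonalization needs a graph-theoretic consequence: in every computable $H\cong G$, there exist computably located ``aggregating'' configurations $(w,z,(x_1,y_1),(x_2,y_2))$ with the required independence. In the standard-graph constructions of Definitions \ref{defn:spokes} and \ref{defn:elongatedspokes} these configurations are manifest—for instance, $d(b_x,b_y)$ for $b$-nodes on distinct spokes is a sum through the center(s), depending genuinely on $d(a_x,b_x)$ and $d(a_y,b_y)$—and this is exactly what drives the norm-$2$ obstruction noted in the discussion preceding the conjecture. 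For an arbitrary $G$ satisfying the hypothesis, one would hope to extract a similar combinatorial rigidity, perhaps by showing that some canonical sub-relation of $d_G$ is relatively intrinsically $n$-approximable and hence forces a Scott-family-like description of the essential distance pairs, but bridging from the computability-theoretic property to such graph-theoretic information appears to require genuinely new ideas.
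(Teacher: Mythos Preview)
The statement you are addressing is labeled \textbf{Conjecture} in the paper, not a theorem or proposition; the paper explicitly says ``We would be significantly interested in any way of developing this analysis into a proof of the following'' and then states the conjecture without proof. So there is no proof in the paper against which to compare your proposal, and your own final paragraph correctly diagnoses why: the authors themselves identify exactly the obstacle you name---that in the standard and elongated constructions one can always locate nodes $w,z$ whose distance depends (via a sum or a minimum) on two independent spoke-distances, but that converting this heuristic into a proof for an \emph{arbitrary} $G$ with intrinsically $n$-approximable-from-above distance function is open.

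Your outline is a faithful formalization of the authors' informal discussion preceding the conjecture: the composition $\rho=r_e\circ s_e$ and the observation that a single oracle query cannot computably encode two independent unbounded quantities is precisely the intuition they sketch. What your proposal does not supply---and cannot, since the conjecture is open---is the structural lemma guaranteeing that such ``aggregating'' configurations exist in every computable copy of every such $G$. The hypothesis of intrinsic $n$-approximability from above is purely negative (no copy has a distance function requiring more than $n$ mind-changes), and it is not clear how to extract from it the positive graph-theoretic content your diagonalization needs. So your proposal is not wrong; it is an accurate strategy sketch that stops exactly where the paper stops.
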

It would follow that the \onebtt-degree spectrum
of the distance function cannot contain exactly those
degrees which are $n$-approximable from above.

\section{Directed Graphs}
\label{sec:directed}

One can repeat the questions from this paper
in the context of computable directed graphs,
rather than the symmetric graphs we have used.
In a \emph{directed graph}, each edge
between vertices $x$ and $y$ has a specific orientation:
it points either from $x$ to $y$, or from $y$ to $x$.
(It is allowed for there to exist two edges between $x$
and $y$, one pointing in each direction.)  Of course,
the orientations of the edges must be computable.
In this context, one speaks of a \emph{directed path}
from $x$ to $y$ as a finite sequence of nodes
$x=x_0,x_1,\ldots,x_n=y$ such that for all $i<n$,
there is an edge from $x_i$ to $x_{i+1}$.
The directed graph $G$ is \emph{connected} if, for every
$x,y\in G$, there is a directed path from $x$ to $y$,
and in this case the \emph{directed distance} from $x$
to $y$ is always defined:  it is the length of the shortest
directed path from $x$ to $y$.  Again, this function
is intrinsically approximable from above, and it appears to us
that the constructions in this paper work equally well for directed
graphs (modulo a few considerations, such as adding directed
paths from $b$ to $u$ in the standard case, and from
$v$ to $u$ in the elongated case, so as to make the
graph connected).  However, with directed graphs
we can accomplish more than has already been proven here
for symmetric graphs.  In particular, it is much easier to realize
the goal we set for ourselves in Section \ref{sec:napprox}.

\begin{thm}
\label{thm:directed}
There exists a computable directed graph $G$ whose distance function
is intrinsically $n$-approximable from above, yet such that
every $f$ which is $n$-approximable from above is \onebtt-equivalent
to the distance function on some computable directed graph isomorphic to $G$.
\end{thm}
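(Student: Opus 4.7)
The plan is to exploit the directionality of edges to make most pairwise distances computable constants, so that only distances of the form $d(a_m,b_m)$ retain a dependence on the spoke-encoded value $f(m)$. The key structural feature is a \emph{shortcut edge} $u\to b_m$ from the center node $u$ to each spoke-endpoint $b_m$; this is what prevents the two-spoke dependency that forces the $\twobtt$-norm in the symmetric setting.

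First I would define a \emph{directed spoke of type $\sigma$} in analogy with Definition \ref{defn:spokes}: endpoints $a,b$ together with $|\sigma|$ directed paths from $a$ to $b$ of lengths $2+\sigma(0)>2+\sigma(1)>\cdots>2+\sigma(|\sigma|-1)$, and two extra $a\to b$ paths of length $2+\sigma(0)$ as identification markers. The \emph{directed standard graph of type $\Gamma$} has a single center $u$ and, for each $m$, a spoke of type $\Gamma(m)$ together with edges $u\to a_m$, $b_m\to u$, and the crucial shortcut $u\to b_m$. Take $G$ to be a computable presentation for a fixed $\Gamma$ enumerating every strictly decreasing tuple of length at most $n+1$ infinitely often. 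For a given $f$ with computable $n$-approximation $g$ from above, construct $H\cong G$ by making the $m$-th spoke of $H$ of type $\sigma_m=\langle g(m,s_0),g(m,s_1),\ldots,g(m,s_k)\rangle$, the sequence of distinct values of $g(m,\cdot)$; since $|\sigma_m|\leq n+1$, each such tuple is among the values of $\Gamma$, so $H\cong G$.

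Next comes the distance analysis. Because every outgoing edge at $a_m$ lies inside the spoke, any directed path from $a_m$ to $b_m$ must traverse at least one full spoke path, giving $d_H(a_m,b_m)=2+f(m)$. For $j\neq k$, the route $a_j\to(\text{shortest spoke }j)\to b_j\to u\to b_k$ via the shortcut yields $d_H(a_j,b_k)=f(j)+4$, which beats any detour through spoke $k$ by at least $f(k)+2$. A routine case analysis shows $d_H(b_j,b_k)=d_H(b_j,a_k)=2$ and that distances involving internal spoke nodes are either computable constants or of the form $f(m)+C$ for a single specific $m$. Hence each value of $d_H$ is determined by at most one query to $f$, giving $d_H\leq_{\onebtt}f$, and $f(m)=d_H(a_m,b_m)-2$ gives $f\leq_{\onebtt}d_H$. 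For intrinsic $n$-approximability in an arbitrary computable $H'\cong G$, I would adapt Lemma \ref{lemma:standarddistance} to the directed case (using the triple equal-length marker paths together with the unique incoming/outgoing edges at $u$ to identify $a_m$ and $b_m$), and use a \emph{lookahead} approximation: let $s_0(m)$ be the stage at which the first complete $a_m\to b_m$ path appears in $H'_s$, and set the approximation to $d_{H'}(a_m,b_m)$ equal to the shortest enumerated such path for $s\geq s_0(m)$, and equal to that first-observed value for $s<s_0(m)$. This function is computable (we may wait to compute its values at early stages), has initial value already equal to the first-observed length, and thus changes only when strictly shorter spoke paths are discovered; there are at most $n$ such discoveries per spoke. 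All other distances inherit the $\leq n$ mind-change bound from the single spoke they depend on, or have $0$ mind changes if constant.

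The main obstacle will be confirming that the shortcut edges $u\to b_m$ do not accidentally create a short $a_m\to b_m$ path bypassing the spoke; the resolution is that any path out of $a_m$ must first finish a spoke path to $b_m$, and returning to $b_m$ via $u$ after already reaching $b_m$ only lengthens the path. A secondary technical point is computable spoke identification in an arbitrary presentation, handled by the triple-equal-length-path markers together with edge-direction information distinguishing $a_m$ from $b_m$ at $u$. The combinatorial care required is far less than in the symmetric case (Lemma \ref{lemma:elongatedstandarddistance} and its thirteen paths), because directionality restricts the shortest-path options so drastically that only one or two natural routes ever need consideration for each pair type.
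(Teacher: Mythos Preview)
Your construction is correct and achieves the theorem, but it differs from the paper's in an interesting way.  The paper does not keep separate $a_m$ nodes: it \emph{conflates} $u$ with every $a_m$, so that each directed spoke consists of directed paths from $u$ itself to $b_m$, together with a long return path from $b_m$ back to $u$ (of length $3+\sigma(0)$, chosen so it never shortcuts the spoke).  The payoff is a one-line structural observation: $u$ becomes the \emph{only} node in the entire graph with out-degree exceeding $1$.  From any $x\neq u$ one follows the unique outgoing edge repeatedly until reaching either the target $y$ or $u$, and the only noncomputable choice in the whole shortest-path analysis is which $u\to b_m$ path to take when one must descend into the target's spoke.  This yields the $\onebtt$ reductions and the intrinsic $n$-approximability with almost no case analysis.

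Your approach instead retains the $a_m$'s and kills the two-spoke dependency with the shortcut edge $u\to b_m$.  This is a genuinely different idea, and it works: the shortcut makes $d(u,b_m)=1$ a constant, so that only distances with \emph{source} $a_m$ (which must traverse spoke $m$ to escape) depend on $f(m)$, and on no other value of $f$.  You pay for this with more case analysis at the $a_m$'s (which now also have high out-degree), but you gain a construction that is a transparent directed adaptation of Definition~\ref{defn:spokes}.  One small gap: as written, ``making the $m$-th spoke of $H$ of type $\sigma_m$'' does not by itself ensure $H\cong G$, since the $\sigma_m$ need not realize every admissible tuple type infinitely often (consider $f$ constant).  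You should instead start with a computable copy of $G$ and adjoin one new spoke of type $\sigma_m$ for each $m$, exactly as in the proof of Theorem~\ref{thm:nspec}; the paper's proof of the present theorem proceeds this way as well.
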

\begin{pf}
Fix $n$.  The directed graph $G$ will be a version of the
standard (undirected) graph on a collection of spokes.
As usual, we let $\Gamma$ enumerate all strictly decreasing
sequences $\sigma\in\omega^{\leq n+1}$ of length
at most $n+1$, and we assume that this enumeration
repeats every such $\sigma$ infinitely often.

A directed spoke of type $\sigma$ looks somewhat like an ordinary
spoke of type $\sigma$.  The two main differences are that
we use $u$ itself as the top node of the directed spoke, rather than
having a top node $a$ adjacent to $u$, and that we include
a directed path, of length $(3+\sigma(0))$, from the bottom node $b$
back to $u$.  The latter modification is necessary in order for
this directed graph to be connected.  The reason for the former will be
explained after the proof.

In between $u$ and $b$, the directed spoke contains three directed paths
from $u$ to $b$ of length $(2+\sigma(0))$, and, for each $i$
with $0<i<|\sigma|$, another directed path from $u$ to $b$
of length $(2+\sigma(i))$.  Thus the final value of $\sigma$
is the length of the shortest directed path from $u$ to $b$ in this spoke
(and there will be no directed paths from $u$ to $b$ through other spokes).
With this, we have described the directed spoke entirely.
The directed graph $G$ contains one directed spoke of type
$\sigma=\Gamma(m)$, with bottom node $b_m$, for each $m\in\omega$.

Now $u$ is the only node anywhere in $G$ with more than one
edge coming out of it.  So, for any computable $H\cong G$, the
same holds for some node $u_H$.  Therefore, there is little difficulty
in choosing the shortest directed path from an arbitrary node $x\in H$
to another one $y$:  the only choice in finding the path arises
when/if one reaches $u_H$.  Starting at $x$, one follows the unique
directed edge emerging from $x$, then the one emerging from that node,
etc., until one reaches either $y$ or $u_H$.  If one has reached $y$,
then the length of the path so far is clearly the directed distance
from $x$ to $y$ in $H$.  Otherwise, one then determines on which spoke
of $H$ $y$ lies (which is computable), and fixes the bottom node $b_m$ of that spoke.
If $y$ lies on a directed path from $u_H$ to $b_m$, the one follows that path
until reaching $y$, and this is the shortest directed path from $x$ to $y$.
If $y=b_m$ or $y$ lies on the directed path from $b_m$ to $u_H$, then one
follows the shortest path from $u_H$ to $b_m$, and then on to $y$.
Here arises the only ambiguity: choosing the shortest directed path
from $u_H$ to $b_m$.  The directed distance from $x$ to $y$ is the
length of this path, plus the lengths of the (already determined)
paths from $x$ to $u_H$ and from $b_m$ to $y$.  Therefore,
the distance function $d_H$ on $H$
is \onebtt-reducible to the function $f(m)=d(u_H,b_m)-2$.
Conversely, this function $f$ has $f\leq_1 d_H$.  But in any computable
copy $H$ of $G$, this $f$ is $n$-approximable from above,
since one simply finds the three paths of length
$(2+\sigma(0))$ from $u_H$ to $b_m$ (where $\sigma=\Gamma(m)$),
thereby identifying $b_m$, and then waits for shorter directed
paths to appear -- which will happen at most $n$ times,
since $|\sigma|\leq n+1$.

Thus $d$ is intrinsically $n$-approximable from above.
The converse is exactly the same construction we have executed
previously.  Given any $f$ which is $n$-approximable from above,
say via some computable $g(m,s)$, we start with a computable copy of $G$
and extend it as follows to a directed graph $H$.
For each $m$, add to $G$ a directed spoke of type $\sigma_m$,
where $\sigma_m(0)=g(m,0)$ and $\sigma_m(i+1)$ is defined
iff there is an $s$ with $g(m,s)<\sigma_m(i)$, in which case
$\sigma_m(i+1)=g(m,s)$ for the least such $s$.  Since $g$ is an
$n$-approximation, we have $|\sigma_m|\leq n+1$, and the last
value of $\sigma$ is $f(m)$, so $d_H(u,b_m)=f(m)$.  For bottom
nodes $b$ of directed spokes in the original graph $G$ within $H$,
$d_h(u,b)$ is computable, since in $G$ we know the type $\sigma$
of each such directed spoke.  Hence, by the same argument
as in the preceding paragraph, $d_H$ is \onebtt-equivalent
to the function $f$, as desired.
\qed\end{pf}

We note here that the conflation of $u$ with the top nodes
$a$ of the directed spokes was necessary.  Had the $a$'s been part of these
directed spokes, then the computation of the distance from
the $a_k$ on spoke $k$ to the $b_m$ on spoke $m$
would have required knowing both $d(a_k,b_k)$
and $d(a_m,b_m)$, hence would have required a \btt-reduction
of norm $2$.  However, the trick of eliminating the nodes $a$
does not allow us to prove Theorem \ref{thm:directed}
for symmetric graphs, since with no orientation on the edges,
the computation of the distance from one bottom node
to another still requires questions about the distance
from top to bottom on two different spokes.

\section{Related Topics}
\label{sec:questions}

For graphs with infinitely many connected components,
the distance function is $(\omega+1)$-approximable from above,
assuming we allow $\infty$ as the distance between
any two nodes in distinct components.
One approximates the distance function $d(x,y)$
at stage $0$ by $g(x,y,0)=\omega$ (or $\infty$),
which will continue to be the value as long as
$x$ and $y$ are not known to be in the same connected component.
Meanwhile, we search systematically for a path from
$x$ to $y$ within increasing finite subgraphs of $G$,
and if we find one, say of length $l$, at some stage $s$,
then we set $g(x,y,s)=l$, and then continue
exactly as in the connected case, searching for shorter paths.
It is clear that this distance function $d$ is therefore
$(\omega+1)$-approximable from above, in the obvious
definition, provided that one allows $\infty$ as an
output of the function.  (If $\infty$ is not allowed,
then no notion of $(\omega+1)$-approximability
from above makes sense and distinguishes the concept
from $\omega$-approximability from above.)

In a different context, recent work by Steiner in
\cite{S12} has considered the number of realizations
of various algebraic types within a computable structure,
and has asked in which cases one can put a computable
upper bound on the number of realizations of each algebraic
type.  Over the theory \textbf{ACF$_0$}, for example,
an algebraic type is generated by
the formula $p(X)=0$, where $p$ is a polynomial
irreducible over the ground field, and the degree
of the polynomial is an upper bound for the number of realizations
of this type in an arbitrary field (not necessarily
algebraically closed).  Since the minimal
polynomial of the element over the prime subfield can be found
effectively, one can compute such an upper bound,
whereas for other computable algebraic structures considered
by Steiner, no such upper bound exists.
The function counting the number of realizations of each algebraic type
in a computable algebraic structure is approximable from below,
and when a computable upper bound exists, this function
becomes the dual of a function approximable from above,
exactly as described in Definition \ref{defn:dual}.
Therefore, the theorems proven in Section \ref{sec:AFA}
apply to such functions.  On the other hand,
when there is no computable upper bound, the standard
results about functions approximable from below apply,
and we saw in Section \ref{sec:AFA} that these results
differ in several ways from the results when a computable
bound does exist.

To close, we ask what connection, if any, there might be between
distance functions on computable graphs and Kolmogorov complexity.
Is it possible that Kolmogorov complexity can be presented
as the distance function on some computable graph?
(This is a different matter than using Kolmogorov complexity to
construct structures with prescribed model-theoretic properties, as in
\cite{KSS}.)
It might be useful to fix one node $e\in G$ -- call it
the \emph{Erd\"os node} -- and to consider $d(e,x)$,
a unary function on $G$, in place of the full distance function;
in this case one could directly build a computable graph $G$
and a computable function $f:\omega\to G$ such that
$d(e,f(n))$ is exactly the Kolmogorov complexity of $n$.
Having done so, one could then ask about other
computable copies of $G$:  does the distance function
on those copies correspond to Kolmogorov complexity
under some different universal (prefix-free?) machine?
Right now, this question is not well-formed, and
there is no obvious reason to expect to find any connections
at all between these topics, except for their common use
of functions approximable from above, and their common
triangle inequalities.  (If one knows the Kolmogorov complexity
of binary strings $\sigma$ and $\tau$, one gets an
upper bound on the Kolmogorov complexity of the concatenation
$\sigma\hat{~}\tau$.)  However, any connection
that might arise would be a potentially fascinating link
between algorithmic complexity and computable model theory.


\end{document}